\setlist[enumerate]{label={\upshape(\arabic*)}}
\titleformat{\subsection}[runin]
  {\normalfont\bfseries}{\thesubsection}{1em}{}
\theoremstyle{definition}
\newtheorem{defn}{Definition}[section]
\newtheorem{prop}[defn]{Proposition}
\newtheorem{thm}[defn]{Theorem}
\newtheorem{lem}[defn]{Lemma}
\newtheorem{cor}[defn]{Corollary}
\newtheorem{rmk}[defn]{Remark}
\newtheorem{conj}[defn]{Conjecture}
\numberwithin{equation}{section}
\title{THE CHOW RING OF HYPERKÄHLER VARIETIES OF $K3^{[2]}$-TYPE VIA LEFSCHETZ ACTIONS}
\author{Andreas Kretschmer}
\date{}
\address{Otto-von-Guericke University Magdeburg, Institute for Algebra and Geometry, Magdeburg, Germany}
\email{andreas.kretschmer@ovgu.de}
\begin{document}
\nocite{*}
\setlength{\parindent}{0em}

\begin{abstract}
We propose an explicit conjectural lift of the Neron--Severi Lie algebra of a hyperkähler variety $X$ of $K3^{[2]}$-type to the Chow ring of correspondences ${\rm CH}^\ast(X \times X)$ in terms of a canonical lift of the Beauville--Bogomolov class obtained by Markman. We give evidence for this conjecture in the case of the Hilbert scheme of two points of a $K3$~surface and in the case of the Fano variety of lines of a very general cubic fourfold. Moreover, we show that the Fourier decomposition of the Chow ring of $X$ from \cite[Theorem~2]{SV} agrees with the eigenspace decomposition of a canonical lift of the grading operator.
\end{abstract}

\maketitle

\section{Introduction and results}\label{Intro}

The main purpose of this note is to hint at a connection between two lines of reasoning which are both relevant for the study of the Chow ring of hyperkähler varieties of $K3^{[2]}$-type. One focuses on Hilbert schemes of points of $K3$~surfaces \cite{O, NOY} using Nakajima operators, and the other investigates an analog of the Fourier transform and injectivity results for the cycle class map on subrings of universal classes \cite{SV, FLSV}. The present text suggests a connection in two directions. The first is provided by Conjecture~\ref{MainConj}, asking for an extension of the main theorem of~\cite{O} to the $K3^{[2]}$-case. The second comes in the form of Theorem~\ref{agreeFourier}, stating that the Fourier decomposition of the Chow ring from \cite[Theorem~2]{SV} agrees with the eigenspace decomposition of a canonical lift of the cohomological grading operator $h$.

\subsection{Conventions.}
We denote the Chow ring of a smooth projective variety $X$ over $\mathbb{C}$ by ${\rm CH}^\ast(X)$, and it will always be with coefficients in $\mathbb{Q}$. Similarly, we often abbreviate the cohomology ring $H^\ast(X, \mathbb{Q})$ by $H^\ast(X)$. For a cycle class $Z \in {\rm CH}^\ast(X)$ or a cohomology class $\beta \in H^\ast(X)$ we denote the pullbacks to $X \times X$ via the two projections by $Z_1$, $Z_2$ and $\beta_1$, $\beta_2$, and similar conventions will be followed throughout.

\subsection{Lefschetz actions on the Chow ring.}
Let $X$ be a hyperkähler variety of \mbox{$K3^{[2]}$-type} and $(-,-)$ the Beauville--Bogomolov bilinear form on $H^2(X)$. Let $L \in {\rm CH}^2(X \times X)$ be Markman's canonical lift of the associated cohomology class and $l \in {\rm CH}^2(X)$ as in \eqref{eqldef}, see Section~\ref{pre} for details. We denote the second Betti number of $X$ by $r = 23$.

\begin{prop}
For every divisor class $a \in {\rm CH}^1(X)$ with $(a,a) \neq 0$ the cycle classes
\begin{gather*}
F_a := \frac{4}{(r+2)(a,a)} (l_1 a_1 + l_2 a_2) + \frac{2}{(a,a)}L(a_1 + a_2) \in {\rm CH}^3(X \times X), \\
H := \frac{4}{r (r+2)} (l_2^2 - l_1^2) + \frac{2}{r+2}L(l_2 - l_1) \in {\rm CH}^4(X \times X)
\end{gather*}
are lifts of the Lefschetz dual operator $f_a$ and the grading operator $h$, respectively.
\end{prop}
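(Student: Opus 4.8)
The plan is to reduce the statement to a computation in cohomology and then to verify the resulting operator identities one graded piece at a time. First I would observe that ``being a lift of $f_a$ (resp.\ $h$)'' refers only to the image of the cycle class under the cycle class map $\mathrm{CH}^\ast(X\times X)\to H^\ast(X\times X)$: it asserts that the cohomology class of $F_a$ (resp.\ $H$), acting as a correspondence by $\alpha\mapsto p_{2\ast}(p_1^\ast\alpha\cdot(-))$, induces the operator $f_a$ (resp.\ $h$) on $H^\ast(X)$. Since the cycle class map is a ring homomorphism compatible with $p_i^\ast$ and $p_{2\ast}$, it suffices to work entirely in $H^\ast(X\times X)$. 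Here I would record the three cohomological inputs: the class of $L$ is the Beauville--Bogomolov class $\mathfrak{B}=\sum_{i,j} g^{ij}\, e_i\boxtimes e_j\in H^2(X)\boxtimes H^2(X)$, where $g^{ij}$ is the inverse Gram matrix of $(-,-)$; the class of $l$, by \eqref{eqldef} and the degree-$4$ Fujiki relation, satisfies $\int_X l\,\beta\gamma=(r+2)(\beta,\gamma)$ for $\beta,\gamma\in H^2(X)$ and $\int_X l^2=r(r+2)$; and $e_a=\cup\,a$ is induced by $\Delta\cdot a_1$.

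Next I would translate each summand into an operator on $H^\ast(X)$ by the projection formula, writing $\mathfrak{B}_\ast$ for the correspondence induced by $\mathfrak{B}$. Using $a_i=p_i^\ast a$ one gets $L\,a_1\leftrightarrow \mathfrak{B}_\ast\circ e_a$, $L\,a_2\leftrightarrow e_a\circ\mathfrak{B}_\ast$, $L\,l_1\leftrightarrow \mathfrak{B}_\ast\circ(\cup l)$, $L\,l_2\leftrightarrow(\cup l)\circ\mathfrak{B}_\ast$, while the decomposable terms act as $l_1a_1\colon\beta\mapsto(\int_X\beta\, l a)\,1$, $l_2a_2\colon\beta\mapsto(\int_X\beta)\,l a$, $l_1^2\colon\beta\mapsto(\int_X\beta\, l^2)\,1$ and $l_2^2\colon\beta\mapsto(\int_X\beta)\,l^2$. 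The decisive structural point is that, because $\mathfrak{B}$ lies in $H^2\boxtimes H^2$, the operator $\mathfrak{B}_\ast$ is supported in the single bidegree $H^6(X)\to H^2(X)$, and each decomposable term is supported at one extreme degree. Consequently $F_a$ and $H$ each decompose as a sum of operators supported on pairwise distinct graded pieces $H^{2p}(X)$, so it is enough to match $F_a$ with $f_a$ and $H$ with $h$ separately on each of $H^0,H^2,\dots,H^8$.

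On the extreme pieces this is immediate from the two integral relations for $l$: on $H^2$ the term $l_1a_1$ gives $\beta\mapsto (r+2)(\beta,a)\,1$, and multiplying by $\tfrac{4}{(r+2)(a,a)}$ reproduces the value $f_a(\beta)=\tfrac{4(\beta,a)}{(a,a)}\,1$ predicted by the $\mathfrak{sl}_2$-triple $(e_a,f_a,h)$; the computation $\int_X l^2=r(r+2)$ similarly yields the grading eigenvalues $\mp 4$ of $H$ on $H^0$ and $H^8$, and the eigenvalues $\mp2$ on $H^2,H^6$ follow from $\mathfrak{B}_\ast\circ(\cup l)=(r+2)\,\mathrm{id}$ on $H^2$ together with Poincar\'e duality, while the middle piece $H^4$ receives no contribution and gives the eigenvalue $0$ automatically. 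The remaining, genuinely substantial, step is the middle degree of $F_a$: I must show $\mathfrak{B}_\ast\circ e_a=\tfrac{(a,a)}{2}\,f_a$ on all of $H^4(X)$, i.e.\ that $\gamma\mapsto\sum_{i,j} g^{ij}\bigl(\int_X a\gamma\, e_i\bigr)\,e_j$ equals $\tfrac{(a,a)}{2} f_a(\gamma)$.

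I expect this middle-degree identity to be the main obstacle, because $H^4(X)$ is not exhausted by the Verbitsky component $\mathrm{Sym}^2H^2$ but contains a large complementary Looijenga--Lunts--Verbitsky summand, whereas $\mathfrak{B}_\ast$ only sees the $H^2$-pairing. Here I would invoke the description of $H^\ast(X)$ as a module over the N\'eron--Severi Lie algebra: $f_a$ is the unique completion of $(e_a,h)$ to an $\mathfrak{sl}_2$-triple, so it suffices to check that the displayed operator has the correct weight and the correct commutator with $e_a$ (namely $h$, already matched above), which reduces the verification to the degree-$4$ Fujiki relation evaluated on products of degree-$2$ classes. The same relation, via Poincar\'e duality, then settles the conjugate piece $H^6\to H^4$ of $F_a$ as well. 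Throughout, the constants $\tfrac{4}{(r+2)(a,a)}$, $\tfrac{2}{(a,a)}$, $\tfrac{4}{r(r+2)}$ and $\tfrac{2}{r+2}$ are pinned down by these Fujiki computations together with $r=23$.
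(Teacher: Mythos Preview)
Your proposal is correct and, for the claim about $F_a$, takes essentially the same route as the paper: you translate the summands of $F_a$ into explicit cohomological operators and then, to identify the result with $f_a$, check the $\mathfrak{sl}_2$-commutation relation $[e_a,\,\cdot\,]=h$ and invoke uniqueness of the Lefschetz dual (this is exactly Proposition~\ref{f_a} followed by Corollary~\ref{T_a}). One small remark: you single out the non-Verbitsky part of $H^4(X)$ as the main obstacle, but once you switch to the commutator check the $H^4$ case becomes a trivial cancellation, since both $e_a\circ(\mathfrak{B}_\ast\circ e_a)$ and $(e_a\circ\mathfrak{B}_\ast)\circ e_a$ give $\beta\mapsto a\,\mathfrak{B}_\ast(a\beta)$; the genuine Fujiki-relation computations occur in degrees~$2$ and~$6$, just as in the paper's proof of Proposition~\ref{f_a}.

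For the claim about $H$ your approach \emph{differs} from the paper's and is in fact more direct. You verify the grading eigenvalues degree by degree, using $\int_X \mathfrak{b}^2=r(r+2)$ on $H^0$ and $H^8$, the identity $\mathfrak{B}_\ast\circ(\cup\,\mathfrak{b})=(r+2)\,\mathrm{id}$ (Lemma~\ref{BB}) on $H^2$ and $H^6$, and the observation that every summand of $H$ vanishes on $H^4$. The paper instead first proves the cohomological identity of Proposition~\ref{relCohom}, uses it to show that $[H]$ and $\tfrac{1}{(a,a)}[\widetilde H_a]=\tfrac{1}{(a,a)}[\Delta_\ast(a),\widetilde F_a]$ agree in $H^8(X\times X)$, and then concludes from the already-established fact that $\widetilde F_a$ lifts $\widetilde f_a$. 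Your argument is shorter; the paper's has the side benefit of establishing, in cohomology, the commutator relation $[\Delta_\ast(a),F_a]=H$ that motivates Conjecture~\ref{MainConj} and Conjecture~\ref{conjRel}.
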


\begin{conj}\label{MainConj}
Let $\mathfrak{g}_{\rm NS}(X)$ be the Neron--Severi Lie algebra of $X$ and ${\rm cl}$ the cycle class map. The linear map $\varphi$ in the commutative diagram
\begin{center}
\begin{tikzcd}
\mathfrak{g}_{\text{NS}}(X) \ar[r, "\varphi"] \ar[dr, hook]
	& {\rm CH}^\ast(X \times X) \ar[d, "{\rm cl}"] \\
	& {\rm End}_{\mathbb{Q}}(H^\ast(X, \mathbb{Q})),
\end{tikzcd}
\end{center}
given by $\varphi(e_a) = \Delta_\ast(a)$, $\varphi(f_a) = F_a$ and $\varphi(h) = H$ is a well-defined Lie algebra homomorphism.
\end{conj}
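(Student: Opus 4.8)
The plan is to separate the claim into its easy and hard halves. Commutativity of the diagram is the easy half: on the three generators one has $\mathrm{cl}(\Delta_\ast(a)) = e_a$ (cup product with $a$), while $\mathrm{cl}(F_a) = f_a$ and $\mathrm{cl}(H) = h$ by the preceding Proposition, so once $\varphi$ is known to be a Lie algebra homomorphism, $\mathrm{cl}\circ\varphi$ and the inclusion are two Lie homomorphisms agreeing on generators (using functoriality of $\mathrm{cl}$ for composition of correspondences), hence equal. The entire substance is therefore the homomorphism property itself. Here I would use the theorem of Looijenga--Lunts and Verbitsky, by which $\mathfrak{g}_{\rm NS}(X) \cong \mathfrak{so}({\rm NS}(X)\oplus U)$ is generated by the Lefschetz triples $(e_a, h, f_a)$ subject to an explicit set of relations encoded by the Beauville--Bogomolov form. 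Defining $\varphi$ on these generators and extending over the free Lie algebra, well-definedness and the homomorphism property both amount to checking that $\Delta_\ast(a)$, $F_a$ and $H$ satisfy the \emph{same} relations under composition $[\alpha,\beta] = \alpha\circ\beta - \beta\circ\alpha$ of self-correspondences --- crucially, already in ${\rm CH}^\ast(X\times X)$ and not merely modulo homological equivalence, where they hold automatically. This reduces the conjecture to finitely many cycle-level identities.

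Several of these require little work. The weight relations $[H,\Delta_\ast(a)] = 2\Delta_\ast(a)$ and $[H,F_a] = -2F_a$ should follow from $H$ being the lift of the grading operator together with bookkeeping of the codimensions of $l$, $L$ and $a$. The relation $[\Delta_\ast(a),\Delta_\ast(b)] = 0$ is immediate from $\Delta_\ast(a)\circ\Delta_\ast(b) = \Delta_\ast(ab)$ being symmetric in $a$ and $b$. The genuine content lies in the mixed bracket $[\Delta_\ast(a), F_b]$, which must reproduce $(a,b)H$ plus the lift of the $\mathfrak{so}({\rm NS}(X))$-component of $[e_a,f_b]$ (the part antisymmetric in $a$ and $b$), and in the dual relation $[F_a,F_b] = 0$.

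To treat these last two I would need explicit formulas in ${\rm CH}^\ast(X\times X\times X)$ for the compositions $\Delta_\ast(a)\circ L$ and $L\circ\Delta_\ast(b)$, for the restriction of $L$ to the diagonal in terms of $l$, and above all for the self-composition $L\circ L$. In cohomology these are exactly the identities carried by the quadratic relations of the orthogonal Lie algebra and are guaranteed by the Looijenga--Lunts--Verbitsky theorem; the heart of the matter is to promote them to the Chow ring, where one has only Markman's defining properties of $L$ at one's disposal. I expect $L\circ L$ to be the main obstacle: both $F_a\circ F_b$ and $\Delta_\ast(a)\circ F_b$ produce a term quadratic in $L$, and matching the predicted answer requires that $L\circ L$ decompose in ${\rm CH}^\ast$ into the geometric pieces anticipated by cohomology --- multiples of $L$, of the classes $l_i\,\Delta_\ast(-)$, and of the diagonal. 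Since this self-composition identity for $L$ is not available in general at the cycle level, it is precisely what keeps the statement conjectural; in the two cases where evidence is offered one would instead establish it directly, using Nakajima operators on the Hilbert square and the incidence correspondence on the Fano variety of lines, together with the known structure of their Chow rings.
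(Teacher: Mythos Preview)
This is stated as a \emph{conjecture}, not a theorem, so the paper offers no proof to compare against; it only gives evidence in two special cases. Your outline correctly isolates the substance as verifying the generating relations of $\mathfrak{g}_{\rm NS}(X)$ at the Chow level, and your closing paragraph correctly anticipates that the evidence comes from Nakajima operators on $S^{[2]}$ and from the tautological structure on the Fano variety. But you misjudge the difficulty of several pieces.

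First, the weight relations $[H,\Delta_\ast(a)]=2\Delta_\ast(a)$ and $[H,F_a]=-2F_a$ are \emph{not} bookkeeping. That $H$ lifts $h$ says nothing about how $H$ commutes with other correspondences in ${\rm CH}^\ast(X\times X)$; the grading-operator property controls $H_\ast$ on ${\rm CH}^\ast(X)$, not the adjoint action on self-correspondences. The paper in fact proves these two relations only conditionally on further Chow-level identities (its Conjecture~\ref{conjRel}), and the computation is not short. Second, you list $[F_a,F_b]=0$ among the hard relations, but the paper notes (Remark~\ref{[Ta,Tb]=0}) that this one is a straightforward formal computation valid in complete generality. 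Third, your diagnosis of the obstruction is slightly off: the quadratic relation for $L^2$ (intersection square, not self-composition) \emph{is} available---it is equation~\eqref{eqquad}, one of the standing hypotheses on Markman's lift. What is missing are identities that mix $L$, $l$ and an arbitrary divisor $a$, namely $(a,a)Ll_1=(r+2)La_1^2-2l_1a_1a_2$ and $(r+2)L^2a_1=2Ll_1a_2+l_1l_2a_1$ (intersection products in ${\rm CH}^\ast(X\times X)$). These are the actual conjectural input, and from them all three $\mathfrak{sl}_2$-relations follow. Finally, on the evidence: for $S^{[2]}$ the paper does not verify the relations directly but shows $F_a$ and $H$ coincide with Oberdieck's Nakajima-operator lifts and then quotes his theorem; for the Fano variety the argument is that all terms lie in the tautological subring $R^\ast(F\times F)$, on which the cycle class map is injective, so the cohomological relations lift for free---but only for $a=g$.
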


In the case $X = S^{[2]}$ for a projective $K3$~surface~$S$ we use the explicit description from Theorem~\ref{LdefHilb} of Markman's lift $L$ in order to prove in Section~\ref{Hilb2} that our formulas for the lifts $F_a$ and $H$ agree with the canonical lifts provided in~\cite{O} in terms of Nakajima operators. This is the content of Proposition~\ref{T_a=f_a}. Hence, the main theorem of~\mbox{\emph{loc. cit.}} shows that Conjecture~\ref{MainConj} is true if $X = S^{[2]}$. In the case of the Fano variety of lines we use a result of Fu, Laterveer, Vial and Shen \cite{FLSV} and the explicit description of Markman's lift from Theorem~\ref{Ldef} to obtain the following partial confirmation of Conjecture~\ref{MainConj}.

\begin{thm}\label{LieChow}
Let $F = F(Y)$ be the Fano variety of lines of a smooth cubic fourfold~$Y$ and $g \in {\rm CH}^1(F)$ the Plücker polarization class. Let $L \in {\rm CH}^2(F \times F)$ be Markman's lift. Then there is a Lie algebra homomorphism
\begin{align*}
\mathfrak{sl}_2(\mathbb{Q}&) \rightarrow {\rm CH}^\ast(F \times F), \\
&e \mapsto \Delta_\ast(g), \\
&f \mapsto F_g, \\
&h \mapsto H,
\end{align*}
lifting the $\mathfrak{sl}_2(\mathbb{Q})$-action on cohomology given by the Lefschetz triple $(e_g, f_g, h)$.
\end{thm}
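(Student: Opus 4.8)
The plan is to verify that the three classes satisfy the defining relations of $\mathfrak{sl}_2(\mathbb{Q})$ in the ring $({\rm CH}^\ast(F\times F),\circ)$ of self-correspondences under composition; that is, writing $e=\Delta_\ast(g)$, $f=F_g$, $h=H$, I must prove
\begin{gather*}
[H,\Delta_\ast(g)]=2\,\Delta_\ast(g),\qquad [\Delta_\ast(g),F_g]=H,\qquad [H,F_g]=-2\,F_g.
\end{gather*}
Granting these, the lifting assertion is immediate: the Proposition yields ${\rm cl}(F_g)=f_g$ and ${\rm cl}(H)=h$, while $\Delta_\ast(g)$ acts on cohomology by cup product with $g$, so ${\rm cl}(\Delta_\ast(g))=e_g$; hence applying ${\rm cl}$ to the three identities recovers exactly the Lefschetz relations for the triple $(e_g,f_g,h)$, valid by hard Lefschetz for the polarisation $g$.

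The first observation I would exploit is that composition with $\Delta_\ast(g)$ is elementary: by the projection formula $\Delta_\ast(g)\circ\Gamma=g_2\cdot\Gamma$ and $\Gamma\circ\Delta_\ast(g)=g_1\cdot\Gamma$ for every $\Gamma$, so $[\Delta_\ast(g),\Gamma]=(g_2-g_1)\cdot\Gamma$. This converts the first two relations into the intersection-theoretic identities
\begin{gather*}
(g_1-g_2)\cdot H=2\,\Delta_\ast(g),\qquad (g_2-g_1)\cdot F_g=H,
\end{gather*}
which I would establish by substituting the explicit formulae for $F_g$, $H$ and Markman's lift $L$ from Theorem~\ref{Ldef}, and then collapsing the resulting combinations of universal classes $g_1^ag_2^bl_1^cl_2^d$, $L\cdot g_ig_j$, $L\cdot g_il_j$ and $L\cdot l_il_j$ by the quadratic relations among $g$, $l$ and $L$ proved in \cite{FLSV} (the Chow lifts of the cohomological relations forced by the Beauville--Bogomolov form); the diagonal on the right of the first identity arises from the relation reconstructing $\Delta$ out of $L$ and $l$.

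The remaining relation $[H,F_g]=-2F_g$ is the crux, since neither correspondence is supported on the diagonal. Using $H=\Delta_\ast(g)\circ F_g-F_g\circ\Delta_\ast(g)$, the elementary rules above and associativity, one reduces it to a single composition:
\begin{gather*}
[H,F_g]=(g_1+g_2)\cdot(F_g\circ F_g)-2\,F_g\circ\Delta_\ast(g)\circ F_g,
\end{gather*}
so everything hinges on the self-composition of $F_g$, and thus on the self-composition $L\circ L$ of Markman's lift. I expect this to be the main obstacle. It is resolved by the quadratic relation for $L$ from \cite{FLSV}, the Chow-level identity expressing $L\circ L$ and the mixed compositions of $L$ with the $l_i$- and $g_i$-multiplication operators through $\Delta$ and the universal classes. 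Equivalently --- and this is the cleanest route --- since the explicit description of $L$ places $F_g$, $H$, $\Delta_\ast(g)$ and all of their compositions in the subring of universal classes on which \cite{FLSV} proves ${\rm cl}$ to be injective in the relevant codimensions, and since each of the three combinations above vanishes in cohomology by hard Lefschetz, injectivity forces them to vanish already in ${\rm CH}^\ast(F\times F)$. The residual effort is organisational: controlling the finitely many genuinely two-variable products that survive composition, using that every class pulled back from a single factor acts as a rank-one operator.
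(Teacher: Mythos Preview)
Your proposal is correct and, in its final ``cleanest route'' paragraph, lands on exactly the argument the paper gives: all three classes $\Delta_\ast(g)$, $F_g$, $H$ lie in the tautological subring $R^\ast(F\times F)$, this subring is closed under composition of correspondences (by \cite[Proposition~6.3]{FLSV}), the cycle class map is injective on it (Theorem~\ref{inj}), and the three commutator identities hold in cohomology because the images form a Lefschetz triple --- hence they hold in Chow.

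Two remarks on presentation. First, the direct computational route you sketch for the first two relations is unnecessary detour: once you have the injectivity argument, it disposes of all three relations uniformly, and the paper does not attempt any explicit intersection-theoretic verification. Second, you should make the closure of $R^\ast(F\times F)$ under composition explicit rather than tacit in the phrase ``all of their compositions in the subring of universal classes''; this is not automatic (composition involves a pushforward from $F\times F\times F$) and is precisely the content of \cite[Proposition~6.3]{FLSV}, which the paper invokes. With that citation in place and the computational preamble excised, your argument is the paper's proof.
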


Whenever $F$ has Picard rank $1$, i.e., ${\rm CH}^1(F) = \langle g \rangle$, which is true for very general cubic fourfolds~$Y$, then Conjecture~\ref{MainConj} reduces to Theorem~\ref{LieChow}. We also propose two new relations in Conjecture~\ref{conjRel} which would yield a generalization of Theorem~\ref{LieChow} to the case of divisor classes different from $g$, see Proposition~\ref{LieChowGeneralization}. More evidence for Conjecture~\ref{MainConj} is provided by Remark~\ref{[Ta,Tb]=0}, where we note that $[F_a, F_b] = 0$ in full generality.\\
The obstacle to proving the conjecture in the case of Picard rank $>1$ is that we neither have at our disposal an analog of the machinery of Nakajima operators nor sufficiently strong injectivity results for the cycle class map yet, involving non-tautological divisor classes and hence extending results such as Theorem~\ref{inj}, which is the main geometrical input for Theorem~\ref{LieChow}.

\subsection{Eigenspace decomposition of $H$.}
If the lift $H$ of $h$ is diagonalizable, it could be expected to be multiplicative with respect to the intersection product. This is because the eigenspace decomposition of $H$ can be viewed as an analog of the Beauville decomposition in the abelian variety case, as discussed in the introduction of~\cite{NOY}.

\begin{thm}\label{eigspace}
Let $X$ be a hyperkähler variety of $K3^{[2]}$-type endowed with a lift $L \in {\rm CH}^2(X \times X)$ of $\mathfrak{B}$ satisfying the relations \eqref{eqquad}-\eqref{relation25}. Let $\Lambda_\lambda^i \subseteq {\rm CH}^i(X)$ be the eigenspace for the eigenvalue $\lambda$ of $H_\ast$ and denote ${\rm CH}^i(X)_s := \Lambda_{2i-4-s}^i$. The operator $H_\ast \in {\rm End}_\mathbb{Q}({\rm CH}^\ast(X))$ is diagonalizable with eigenspace decomposition
\begin{align}\label{eigDecomp}
\begin{split}
{\rm CH}^0(X) &= {\rm CH}^0(X)_0, \\
{\rm CH}^1(X) &= {\rm CH}^1(X)_0, \\
{\rm CH}^2(X) &= {\rm CH}^2(X)_0 \oplus {\rm CH}^2(X)_2, \\
{\rm CH}^3(X) &= {\rm CH}^3(X)_0 \oplus {\rm CH}^3(X)_2, \\
{\rm CH}^4(X) &= {\rm CH}^4(X)_0 \oplus {\rm CH}^4(X)_2 \oplus {\rm CH}^4(X)_4.
\end{split}
\end{align}
All direct summands outside the leftmost column belong to the homologically trivial cycle classes ${\rm CH}^\ast(X)_{\rm hom}$. We have
\begin{equation*}
{\rm CH}^3(X)_2 = \Lambda^3_0 = {\rm CH}^3(X)_{\rm hom}, \ {\rm CH}^4(X)_0 = \Lambda^4_4 = \langle l^2 \rangle, \ {\rm CH}^4(X)_2 = \Lambda^4_2 = l \cdot L_\ast({\rm CH}^4(X)),
\end{equation*}
so that the cycle class map is injective on ${\rm CH}^i(X)_0$ except maybe for $i=2$. Moreover, all elements of ${\rm CH}^4(X)_0 \oplus {\rm CH}^4(X)_2$ are multiples of $l$, and multiplication by $l$ gives an injective map ${\rm CH}^1(X) \rightarrow {\rm CH}^3(X)_0$. Furthermore, $L_\ast({\rm CH}^4(X)_4) = 0$.
\end{thm}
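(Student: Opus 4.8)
The plan is to deduce the theorem from the Fourier decomposition of Shen--Vial \cite{SV}, whose defining relations \eqref{eqquad}--\eqref{relation25} are precisely the standing hypothesis, by showing that $H_\ast$ is diagonal with respect to that decomposition. Concretely, I would quote from \cite[Theorem~2]{SV} the direct sum $\mathrm{CH}^i(X)=\bigoplus_s \mathrm{CH}^i(X)_s$ together with its structural properties --- the descriptions of $\mathrm{CH}^3(X)_2$, $\mathrm{CH}^4(X)_0$ and $\mathrm{CH}^4(X)_2$, the injectivity statements, and the fact that $\mathrm{CH}^4(X)_0\oplus\mathrm{CH}^4(X)_2$ consists of multiples of $l$ --- and then reduce everything to the single assertion that $H_\ast$ acts on each Shen--Vial summand $\mathrm{CH}^i(X)_s$ by the scalar $2i-4-s$. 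Granting this, $H_\ast$ is diagonal in the decomposition \eqref{eigDecomp}, its eigenspaces $\Lambda^i_\lambda$ coincide with the $\mathrm{CH}^i(X)_s$ under the indexing $\mathrm{CH}^i(X)_s=\Lambda^i_{2i-4-s}$, and all the listed consequences are the corresponding statements of \emph{loc.\ cit.} transported through this identification.

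To verify the scalar action I would first rewrite $H_\ast$ using the projection formula. Since $l_1^2=p_1^\ast(l^2)$ and $l_2^2=p_2^\ast(l^2)$ contribute only in codimensions $0$ and $4$ for dimension reasons, one gets for $\alpha\in\mathrm{CH}^i(X)$ that
\[
H_\ast(\alpha)=\frac{2}{r+2}\bigl(l\cdot L_\ast(\alpha)-L_\ast(l\cdot\alpha)\bigr)+\frac{4}{r(r+2)}\Bigl(\bigl({\textstyle\int_X}\alpha\bigr)\,l^2-\bigl({\textstyle\int_X}\alpha\cdot l^2\bigr)[X]\Bigr),
\]
so that, away from the two boundary terms, $H_\ast$ is the scalar $\tfrac{2}{r+2}$ times the commutator $[m_l,L_\ast]$ of multiplication by $l$ with the Fourier-type operator $L_\ast$. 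Because $H$ lifts $h$, the identity $\mathrm{cl}\circ H_\ast=h\circ\mathrm{cl}$ already shows that $H_\ast$ acts on $\mathrm{CH}^i(X)$ modulo homological equivalence by the weight $2i-4$ of $h$ on $H^{2i}(X)$. This pins down the cohomologically visible summand $\mathrm{CH}^i(X)_0=\Lambda^i_{2i-4}$, and it settles codimensions $0$ and $1$ outright, where there are no homologically trivial classes, giving $H_\ast=-4$ and $H_\ast=-2$.

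The remaining, genuinely computational part is to evaluate the commutator $l\cdot L_\ast(\alpha)-L_\ast(l\cdot\alpha)$ on the homologically trivial summands using the relations \eqref{eqquad}--\eqref{relation25}, which are exactly what control the composites $L_\ast\circ m_l$, $m_l\circ L_\ast$ and the values $L_\ast(l^k)$ occurring here. On each $\mathrm{CH}^i(X)$ this is a finite check matching the Shen--Vial characterization of the $s$-pieces, for instance $H_\ast(l^2)=4\,l^2$ on the line $\langle l^2\rangle=\mathrm{CH}^4(X)_0$ (note $L_\ast(l\cdot l^2)=0$ since $l^3=0$) and the scalar $2$ on $\mathrm{CH}^4(X)_2=l\cdot L_\ast(\mathrm{CH}^4(X))$. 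The vanishing $L_\ast(\mathrm{CH}^4(X)_4)=0$ I would then read off from the eigenvalue-$0$ equation: on $\mathrm{CH}^4(X)$ the term $L_\ast(l\cdot\alpha)$ drops out, so $H_\ast(\alpha)=0$ forces $l\cdot L_\ast(\alpha)=0$, and the relations imply that multiplication by $l$ is injective on $L_\ast(\mathrm{CH}^4(X))\subseteq\mathrm{CH}^2(X)$, hence $L_\ast(\alpha)=0$.

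The hard part will be exactly this exact-scalar bookkeeping on the homologically trivial pieces: the cohomological shadow only determines $H_\ast$ up to $\mathrm{CH}^i(X)_{\mathrm{hom}}$, so producing the precise lower eigenvalues $2i-4-s$, and not merely distinct ones, requires feeding the full strength of the relations into the commutator, and in the extreme codimensions also checking that the constants $\tfrac{4}{r(r+2)}$ and $\tfrac{2}{r+2}$ with $r=23$ combine into the asserted integers. By contrast, the injectivity of $\mathrm{cl}$ on $\mathrm{CH}^2(X)_0$ is not expected to hold and is correctly left open in the statement.
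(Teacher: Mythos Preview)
There is a genuine gap in your strategy: you propose to import the Shen--Vial Fourier decomposition from \cite[Theorem~2]{SV} and then check that $H_\ast$ is scalar on each piece, but that decomposition requires \emph{all} of the relations \eqref{eqquad}--\eqref{relationL^2}, whereas Theorem~\ref{eigspace} assumes only \eqref{eqquad}--\eqref{relation25}. The paper is explicit about this: relation~\eqref{relationL^2} is precisely what is needed for the Fourier splitting of ${\rm CH}^2(X)$ (see the sentence following Theorem~\ref{agreeFourier}). So quoting the Fourier decomposition as a black box is not available under the stated hypotheses, and your reduction to ``$H_\ast$ is diagonal on the Shen--Vial summands'' collapses in codimension~$2$. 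This is not merely a bookkeeping issue; the whole point of separating Theorem~\ref{eigspace} from Theorem~\ref{agreeFourier} is that the eigenspace decomposition of $H_\ast$ exists under strictly weaker assumptions than the Fourier decomposition.

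The paper therefore runs the logic in the opposite direction. It proves Theorem~\ref{eigspace} from scratch by showing, in each codimension, that $H_\ast$ satisfies a polynomial identity with distinct roots: $(H_\ast+2)\circ H_\ast=0$ on ${\rm CH}^2$, $H_\ast\circ(H_\ast-2)=0$ on ${\rm CH}^3$, and $H_\ast\circ(H_\ast-2)\circ(H_\ast-4)=0$ on ${\rm CH}^4$. These are obtained by iterating your commutator formula for $H_\ast$ and feeding in only \eqref{eqquad}--\eqref{relation25}; the quadratic equation is used via $(L^2)_\ast$ in codimension~$3$, and \eqref{relation25} is what makes the iterations close up. The explicit descriptions of $\Lambda^4_4$, $\Lambda^4_2$ and the vanishing $L_\ast(\Lambda^4_0)=0$ are then read off from the eigenvalue equations together with $L_\ast(l^2)=0$. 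Only afterwards, in Theorem~\ref{agreeFourier} and under the extra relation~\eqref{relationL^2}, does the paper identify this eigenspace decomposition with the Fourier one. Your argument would become a valid (though less sharp) alternative proof if you added \eqref{relationL^2} to the hypotheses, but as stated the theorem does not give you the Fourier decomposition to lean on.
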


The number $s$ should be seen as a sort of defect, e.g., the terms with $s=0$ give the expected eigenvalue $2i-4$ of $H_\ast$. The eigenspace decomposition only shows terms with $s \geq 0$. In fact, there is a conjecture by Beauville in the abelian variety case which predicts this behavior, see \cite{BeauSplit}. It is also expected that the cycle class map is injective on ${\rm CH}^\ast(X)_0$, and Theorem~\ref{eigspace} confirms this in all codimensions except~$2$.

\begin{conj}\label{multiplicativityConj}
Let $X$ be as in Theorem~\ref{eigspace} such that $L$ additionally satisfies \eqref{relationL^2}. For all occurring $s, t \in \mathbb{Z}$ we conjecture that the intersection product gives a well-defined map
\begin{equation*}
{\rm CH}^i(X)_s \times {\rm CH}^j(X)_t \overset{\cdot}{\longrightarrow} {\rm CH}^{i+j}(X)_{s+t}.
\end{equation*}
\end{conj}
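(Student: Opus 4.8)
The plan is to recast the desired inclusion ${\rm CH}^i(X)_s\cdot{\rm CH}^j(X)_t\subseteq{\rm CH}^{i+j}(X)_{s+t}$ as a single operator identity for the grading operator. Since $H_\ast$ is diagonalizable (Theorem~\ref{eigspace}) with eigenvalue $2i-4-s$ on ${\rm CH}^i(X)_s$, and since the eigenvalues of a pair $(\alpha,\beta)\in{\rm CH}^i(X)_s\times{\rm CH}^j(X)_t$ sum to $2(i+j)-8-(s+t)$, the conjecture is equivalent to the twisted Leibniz rule
\[
H_\ast(\alpha\cdot\beta)=H_\ast(\alpha)\cdot\beta+\alpha\cdot H_\ast(\beta)+4\,\alpha\cdot\beta,\qquad \alpha,\beta\in{\rm CH}^\ast(X),
\]
the constant $4=\dim_{\mathbb C}X$ being exactly the defect produced on cohomology by $h$ acting as $k-\dim_{\mathbb C}X$ on $H^k(X)$. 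Indeed, if this rule holds then $\alpha\cdot\beta$ is an eigenvector of $H_\ast$ with the correct eigenvalue $2(i+j)-4-(s+t)$, hence lies in ${\rm CH}^{i+j}(X)_{s+t}$; conversely multiplicativity forces the rule on eigenvectors. By diagonalizability it therefore suffices to check the identity on eigenvectors, and by bilinearity on a set of ring generators of ${\rm CH}^\ast(X)$.

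Next I would translate the twisted Leibniz rule into an identity of correspondences from $X\times X$ to $X$. Writing $\delta_X\in{\rm CH}^{8}(X\times X\times X)$ for the class of the small diagonal, so that $\alpha\cdot\beta=(\delta_X)_\ast(\alpha\times\beta)$, the rule becomes the equality
\[
H\circ\delta_X=\delta_X\circ(H\times\Delta)+\delta_X\circ(\Delta\times H)+4\,\delta_X
\]
in ${\rm CH}^\ast(X\times X\times X)$, where $\Delta$ is the diagonal and $H\times\Delta$, $\Delta\times H$ are the evident self-correspondences of $X\times X$. I would verify this by substituting the definition $H=\frac{4}{r(r+2)}(l_2^2-l_1^2)+\frac{2}{r+2}L(l_2-l_1)$ and evaluating the action of each summand on a product. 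The terms built from $l_1,l_2$ are pullbacks and push-forwards along projections and behave predictably; the genuinely non-trivial contributions come from the operators $L_\ast$ and $(l\cdot L)_\ast$, whose interaction with the intersection product must be controlled using the quadratic relations \eqref{eqquad}--\eqref{relation25} together with the extra hypothesis \eqref{relationL^2} on $L^2$.

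Concretely, using the explicit eigenspace description of Theorem~\ref{eigspace} I would organise the verification by the finitely many bidegree cases. For instance $\alpha,\beta\in{\rm CH}^1(X)={\rm CH}^1(X)_0$ forces target eigenvalue $0$, so the rule reduces to the vanishing $H_\ast(\alpha\cdot\beta)=0$, i.e.\ $\alpha\cdot\beta\in\Lambda^2_0={\rm CH}^2(X)_0$; products with $l$ are handled through the injection given by multiplication by $l$, namely ${\rm CH}^1(X)\hookrightarrow{\rm CH}^3(X)_0$, and the identifications ${\rm CH}^4(X)_0=\langle l^2\rangle$, ${\rm CH}^4(X)_2=l\cdot L_\ast({\rm CH}^4(X))$; and the vanishing $L_\ast({\rm CH}^4(X)_4)=0$ disposes of several cases. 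The main obstacle is precisely what keeps the statement conjectural: controlling products of classes lying in the image of $L_\ast$ requires more than the quadratic relations, namely a formula for the small diagonal $\delta_X$ (equivalently, a cubic relation among $L$, $l_1,l_2$ and the big diagonals). Such a formula is available in the two geometric situations of this paper—via Nakajima calculus for $S^{[2]}$ and via the results of \cite{FLSV} for the Fano variety of lines—but it does not follow from the relations \eqref{eqquad}--\eqref{relation25} and \eqref{relationL^2} alone, and the most delicate remaining point is codimension two, where Theorem~\ref{eigspace} does not establish injectivity of the cycle class map on ${\rm CH}^2(X)_0$ and so multiplicativity there cannot simply be read off from cohomology.
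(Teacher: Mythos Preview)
The statement is a \emph{conjecture}; the paper does not prove it in general and makes no attempt to. What the paper offers is Corollary~\ref{Abstauber}: by Theorem~\ref{agreeFourier} the eigenspace decomposition coincides with the Fourier decomposition of \cite{SV}, and then multiplicativity for $X=S^{[2]}$ and for the Fano variety of a very general cubic is imported wholesale from \cite[Theorem~3]{SV} (with an alternative route for $S^{[2]}$ via \cite[Theorem~1.4]{NOY} and Proposition~\ref{T_a=f_a}). There is no internal argument in the paper deriving multiplicativity from the relations \eqref{eqquad}--\eqref{relationL^2}.

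Your reformulation as the twisted Leibniz rule $H_\ast(\alpha\beta)=H_\ast(\alpha)\beta+\alpha H_\ast(\beta)+4\alpha\beta$ is correct and is exactly the rewriting the paper alludes to when it cites \cite[eq.~(6)]{NOY}. Translating this into a correspondence identity on $X\times X\times X$ involving the small diagonal is also legitimate. But what you have written is a strategy, not a proof: you yourself identify the gap, namely that verifying the identity requires a relation for the small diagonal (equivalently, a cubic relation among $L$, $l_i$ and the big diagonals) which is simply not available from \eqref{eqquad}--\eqref{relationL^2} alone. Even the ``easy'' case you single out, ${\rm CH}^1(X)\cdot{\rm CH}^1(X)\subseteq{\rm CH}^2(X)_0$, is not a consequence of those relations and is one of the genuinely open inclusions (cf.\ Remark~\ref{rmkAbstauber} for the parallel difficulty with ${\rm CH}^2(F)_0\cdot{\rm CH}^2(F)_0$). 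So your proposal correctly diagnoses why the statement is conjectural, but it does not close the gap, and the paper's route in the known cases is entirely different: it goes through the Fourier transform and external multiplicativity results rather than through an operator identity for $H$.
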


This can be rewritten in several ways. For this and a generalization in the case of Hilbert schemes of points of $K3$~surfaces see~\cite[eq.~(6)~and~(44)]{NOY}. Evidence for the conjecture is provided by Corollary~\ref{Abstauber} below.

\subsection{The Fourier decomposition.}\label{FourierSection}
We now compare the eigenspace decomposition of $H_\ast$ in Theorem~\ref{eigspace} to the Fourier decomposition of \cite[Theorem~2]{SV} which needs the additional relation \eqref{relationL^2}. The Fourier transform is given by the correspondence
\begin{equation*}
e^L = [X \times X] + L + \frac{1}{2} L^2 + \frac{1}{6} L^3 + \frac{1}{24} L^4 \in {\rm CH}^\ast(X \times X).
\end{equation*}
We define the Fourier decomposition groups by
\begin{equation*}
{^{e^L}{\rm CH}^i}(X)_s := \{Z \in {\rm CH}^i(X): (e^L)_\ast(Z) \in {\rm CH}^{4-i+s}(X) \}.
\end{equation*}
They do not depend on the precise values of the coefficients of the powers $L^i$ as long as they are non-zero.

\begin{thm}[{\cite[Theorem~2]{SV}}]\label{FourierDecomp}
Let $X$ be a hyperkähler variety of $K3^{[2]}$-type endowed with a lift $L \in {\rm CH}^2(X \times X)$ of $\mathfrak{B}$ satisfying all the relations \eqref{eqquad}-\eqref{relationL^2}. Then the decomposition \eqref{eigDecomp} equally holds with ${\rm CH}^i(X)_s$ replaced by ${{^{e^L}{\rm CH}^i}(X)_s}$.
\end{thm}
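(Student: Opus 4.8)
The plan is to show that the Fourier decomposition groups coincide with the eigenspace groups of Theorem~\ref{eigspace}, i.e. that ${^{e^L}{\rm CH}^i}(X)_s = {\rm CH}^i(X)_s$ for all $i$ and all occurring $s$. Since the groups ${\rm CH}^i(X)_s = \Lambda^i_{2i-4-s}$ already satisfy \eqref{eigDecomp} by Theorem~\ref{eigspace}, this identification immediately yields the statement. Throughout I regard $(e^L)_\ast$ as acting by $(e^L)_\ast(Z) = \sum_{k=0}^4 \frac{1}{k!}(L^k)_\ast(Z)$, where each $(L^k)_\ast \colon {\rm CH}^i(X) \to {\rm CH}^{i+2k-4}(X)$ because $L^k \in {\rm CH}^{2k}(X \times X)$ and $\dim X = 4$. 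Thus the various powers of $L$ contribute to $(e^L)_\ast(Z)$ in a priori different codimensions, and the content of the definition of ${^{e^L}{\rm CH}^i}(X)_s$ is that the transform concentrates in the single codimension $4-i+s$.

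The first step is a \emph{defect-preserving} property of the operators $(L^k)_\ast$: I claim that $(L^k)_\ast$ maps $\Lambda^i_{2i-4-s}$ into $\Lambda^{i+2k-4}_{2(i+2k-4)-4-s}$, that is, it preserves the index $s$ while shifting the codimension by $2k-4$. Over cohomology this is automatic: the class ${\rm cl}(L)^k$ sends $H^{2i}(X)$ to $H^{2i+4k-8}(X)$ and hence shifts the eigenvalue of the grading operator $h$ by $4(k-2)$, leaving $s$ unchanged. To promote this to a statement on ${\rm CH}^\ast(X)$ I compute the commutators of $H_\ast$ with $L_\ast$ and with multiplication by $l_1,l_2$ directly from the formula $H = \frac{4}{r(r+2)}(l_2^2 - l_1^2) + \frac{2}{r+2}L(l_2-l_1)$, using the relations \eqref{eqquad}--\eqref{relation25}. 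Granting this, for $Z \in {\rm CH}^i(X)_s$ every summand $(L^k)_\ast Z$ lies in ${\rm CH}^{i+2k-4}(X)_s$, and the sparsity of \eqref{eigDecomp} — the index $s=4$ occurs only in codimension $4$, and $s=2$ only in codimensions $2,3,4$ — already restricts the contributing $k$ to a short range.

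The heart of the argument is to show that among these possible contributions only the single power $k_0 = 4-i+\tfrac{s}{2}$ survives, so that $(e^L)_\ast Z = \frac{1}{k_0!}(L^{k_0})_\ast Z \in {\rm CH}^{4-i+s}(X)$ and hence $Z \in {^{e^L}{\rm CH}^i}(X)_s$. Conceptually the surviving power realizes the Weyl reflection $\lambda \mapsto -\lambda$ of the $\mathfrak{sl}_2$-triple with Cartan element $H_\ast$: the weight $-\lambda$ of $\Lambda^{4-i+s}_{-\lambda}$ sits exactly in codimension $4-i+s$, which explains the target of the definition. Concretely, the vanishing of the off-diagonal powers $(L^k)_\ast$ for $k \neq k_0$ is extracted from the additional relation \eqref{relationL^2}, which reduces the higher intersection powers of $L$, together with the explicit descriptions supplied by Theorem~\ref{eigspace} — for instance ${\rm CH}^3(X)_2 = {\rm CH}^3(X)_{\rm hom}$, ${\rm CH}^4(X)_0 = \langle l^2 \rangle$, ${\rm CH}^4(X)_2 = l \cdot L_\ast({\rm CH}^4(X))$ and $L_\ast({\rm CH}^4(X)_4)=0$ — which let me evaluate $(L^k)_\ast$ on generators codimension by codimension and watch the unwanted terms disappear. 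This is the step I expect to be the main obstacle: unlike the cohomological eigenvalue shift, the vanishing of the off-diagonal powers is a genuinely Chow-theoretic phenomenon, and it is precisely here that the extra relation \eqref{relationL^2} beyond the hypotheses of Theorem~\ref{eigspace} is indispensable. It also makes transparent why the Fourier groups are insensitive to the precise nonzero coefficients of the powers $L^i$.

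This establishes the inclusion ${\rm CH}^i(X)_s \subseteq {^{e^L}{\rm CH}^i}(X)_s$ and, as a by-product, that $(e^L)_\ast$ restricts to an isomorphism $\tfrac{1}{k_0!}(L^{k_0})_\ast \colon {\rm CH}^i(X)_s \xrightarrow{\sim} {\rm CH}^{4-i+s}(X)_s$. For the reverse inclusion I take $Z \in {^{e^L}{\rm CH}^i}(X)_s$ and decompose $Z = \sum_t Z_t$ with $Z_t \in {\rm CH}^i(X)_t$ via the direct sum of Theorem~\ref{eigspace}. Then $(e^L)_\ast Z = \sum_t (e^L)_\ast Z_t$ with $(e^L)_\ast Z_t \in {\rm CH}^{4-i+t}(X)$ lying in pairwise distinct codimensions; since $(e^L)_\ast Z$ is concentrated in codimension $4-i+s$, all terms with $t \neq s$ must vanish, and the injectivity just noted forces $Z_t = 0$ for $t \neq s$. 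Hence $Z = Z_s \in {\rm CH}^i(X)_s$, completing the proof that ${^{e^L}{\rm CH}^i}(X)_s = {\rm CH}^i(X)_s$ for all $i$ and $s$, and therefore that the decomposition \eqref{eigDecomp} holds verbatim with ${\rm CH}^i(X)_s$ replaced by ${^{e^L}{\rm CH}^i}(X)_s$.
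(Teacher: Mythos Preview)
The paper does not give its own proof of Theorem~\ref{FourierDecomp}: the result is quoted from \cite[Theorem~2]{SV} and used as a black box. Your strategy is different and more ambitious: you attempt to prove Theorem~\ref{agreeFourier} (the identification ${^{e^L}{\rm CH}^i}(X)_s = {\rm CH}^i(X)_s$) independently of~\cite{SV}, and then read off Theorem~\ref{FourierDecomp} from Theorem~\ref{eigspace}. Note that the paper's own proof of Theorem~\ref{agreeFourier} runs the inclusions the other way, showing ${^{e^L}{\rm CH}^i}(X)_s \subseteq {\rm CH}^i(X)_s$ by direct computation with \eqref{eqquad}--\eqref{relationL^2}, and then invoking Theorem~\ref{FourierDecomp} to conclude equality; so your route would genuinely avoid the dependence on~\cite{SV}.

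However, two of your steps are real gaps rather than routine omissions. First, the ``defect-preserving'' claim that $(L^k)_\ast$ sends $\Lambda^i_\lambda$ into $\Lambda^{i+2k-4}_{\lambda+4(k-2)}$ amounts to a commutator identity of the form $[H_\ast,(L^k)_\ast] = 4(k-2)\,(L^k)_\ast$ in the Chow ring. You say you will obtain it ``by computing the commutators \ldots\ from the relations \eqref{eqquad}--\eqref{relation25}'', but no such computation is carried out, and there is no reason to expect it to be formal: $\mathfrak{sl}_2$-type commutation relations at the level of ${\rm CH}^\ast(X\times X)$ are exactly the subject of Conjecture~\ref{MainConj} and are only established in special cases. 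Second, the asserted isomorphism $\tfrac{1}{k_0!}(L^{k_0})_\ast\colon {\rm CH}^i(X)_s \xrightarrow{\sim} {\rm CH}^{4-i+s}(X)_s$ does not follow ``as a by-product'' of the forward inclusion: knowing that only one power of $L$ contributes says nothing about its injectivity. Without that injectivity your reverse-inclusion argument collapses, since from $(e^L)_\ast Z_t = 0$ you cannot deduce $Z_t = 0$. In~\cite{SV} the analogous bijectivity is a substantial part of the work; you would have to supply it here, and it is not clear that the relations \eqref{eqquad}--\eqref{relationL^2} alone suffice.
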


\begin{thm}\label{agreeFourier}
Let $X$ be a hyperkähler variety of $K3^{[2]}$-type endowed with a lift $L \in {\rm CH}^2(X \times X)$ of $\mathfrak{B}$ satisfying all the relations \eqref{eqquad}-\eqref{relationL^2}. Then the eigenspace decomposition \eqref{eigDecomp} of $H_\ast$ agrees with the Fourier decomposition of Theorem~\ref{FourierDecomp}.
\end{thm}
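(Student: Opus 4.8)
The plan is to deduce the equality of the two decompositions from a single family of one-sided inclusions, using the following elementary observation: if a $\mathbb{Q}$-vector space carries two direct sum decompositions $V=\bigoplus_s A_s=\bigoplus_s B_s$ over the same index set with $A_s\subseteq B_s$ for every $s$, then $A_s=B_s$ for all $s$. Indeed, any $v\in B_s$ expands in the $A$-decomposition as $v=\sum_t a_t$ with $a_t\in A_t\subseteq B_t$, and uniqueness of the $B$-decomposition forces $a_t=0$ for $t\neq s$, whence $v=a_s\in A_s$. Since both $\{{\rm CH}^i(X)_s\}_s$ and $\{{}^{e^L}{\rm CH}^i(X)_s\}_s$ are genuine decompositions of ${\rm CH}^i(X)$ by Theorems~\ref{eigspace} and~\ref{FourierDecomp}, indexed by the same set in \eqref{eigDecomp}, it suffices to establish ${\rm CH}^i(X)_s\subseteq{}^{e^L}{\rm CH}^i(X)_s$ for all $i,s$; note that no finiteness of the Chow groups is needed here, only directness of the two decompositions.

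First I would make $H_\ast$ explicit. A projection-formula computation turns the two $L(l_2-l_1)$ summands of $H$ into the operators $Z\mapsto l\cdot L_\ast(Z)$ and $Z\mapsto L_\ast(l\cdot Z)$, so that on ${\rm CH}^1\oplus{\rm CH}^2\oplus{\rm CH}^3$ one has $H_\ast=\tfrac{2}{r+2}\bigl(l\cdot L_\ast-L_\ast(l\cdot\,)\bigr)$, the $l_1^2,l_2^2$ terms contributing only boundary operators on ${\rm CH}^0$ and ${\rm CH}^4$. Together with the relations \eqref{eqquad}-\eqref{relation25} this realizes $(E,F,\hat H):=(l\cdot,\,L_\ast,\,\tfrac12 H_\ast)$ as an $\mathfrak{sl}_2$-triple up to scalars, in which $E$ and $F$ shift the codimension by $\pm2$ while fixing the defect $s$, and the $\hat H$-weight on ${\rm CH}^i(X)_s$ equals $i-2-s/2$. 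In particular each defect space $\bigoplus_i{\rm CH}^i(X)_s$ is an $\mathfrak{sl}_2$-submodule, so every operator lying in the algebra generated by $E$ and $F$ preserves both the defect $s$ and the parity of the codimension.

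The heart of the argument is to place the Fourier transform inside this $\mathfrak{sl}_2$-picture. Writing $(e^L)_\ast=\sum_{k=0}^4\tfrac{1}{k!}(L^k)_\ast$, I would use the relation \eqref{relationL^2} (and its iterates) to rewrite each intersection power $L^k$ in ${\rm CH}^\ast(X\times X)$ as a combination of $L\cdot l_1^a l_2^b$ and $l_1^c l_2^d$, thereby expressing every $(L^k)_\ast$ through $E$, $F$ and the degree pairings $Z\mapsto(\int_X l^cZ)\,l^d$. Because the latter factor through cohomology, they vanish off the defect-$0$ part and land in the defect-$0$ part, so all of $(e^L)_\ast$ preserves the defect $s$; this kills, for a class of defect $s$, every homogeneous Fourier component lying in a codimension whose defect differs from $s$. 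It then remains to show that $(e^L)_\ast$ realizes the Weyl reflection of the triple, i.e. $(e^L)_\ast\circ H_\ast=-H_\ast\circ(e^L)_\ast$, sending the $\hat H$-weight $\mu$ to $-\mu$. Granting this, a class $Z\in{\rm CH}^i(X)_s$, of weight $\mu=i-2-s/2$ and defect $s$, is sent to weight $-\mu$ within the same defect space, hence to the unique codimension $j$ with $j-2-s/2=-\mu$, namely $j=4-i+s$; thus $(e^L)_\ast Z\in{\rm CH}^{4-i+s}(X)$ and $Z\in{}^{e^L}{\rm CH}^i(X)_s$, as required.

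I expect the main obstacle to be exactly this Weyl-reflection identity $(e^L)_\ast H_\ast=-H_\ast(e^L)_\ast$: unlike for the composition exponential $\exp(L_\ast)$, the intersection exponential mixes codimensions, and controlling $H\circ e^L$ and $e^L\circ H$ as correspondences is where the relation \eqref{relationL^2} is indispensable, in close analogy with the Fourier-inversion computations underlying Theorem~\ref{FourierDecomp}. Two further points require separate care: at the extreme codimensions $i=0$ and $i=4$ the boundary terms of $H_\ast$ enter, which I would handle using the explicit identifications ${\rm CH}^4(X)_0=\langle l^2\rangle$ and $L_\ast({\rm CH}^4(X)_4)=0$ from Theorem~\ref{eigspace}; and in codimension $2$, where the cycle class map is not known to be injective, every argument must be run at the level of the correspondence relations rather than their cohomological images. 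Once the inclusions ${\rm CH}^i(X)_s\subseteq{}^{e^L}{\rm CH}^i(X)_s$ are in hand for all $i$ and $s$, the elementary lemma of the first paragraph upgrades them to the desired equality of decompositions.
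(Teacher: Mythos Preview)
Your proposal is a plan rather than a proof, and the central step is left open. You yourself flag the Weyl-reflection identity $(e^L)_\ast\circ H_\ast=-H_\ast\circ(e^L)_\ast$ as ``the main obstacle'' and do not establish it; without it the argument does not conclude. A second gap is the asserted $\mathfrak{sl}_2$-triple: the relation $[\hat H,F]=-2F$ (with $F=L_\ast$) does follow from \eqref{relationL(l^2)=0} and \eqref{relation25}, but $[\hat H,E]=2E$ (with $E=l\cdot$) is not automatic. For instance, on ${\rm CH}^2(X)_0$ it amounts to $l\cdot{\rm CH}^2(X)_0\subseteq{\rm CH}^4(X)_0=\langle l^2\rangle$, which is a nontrivial consequence of \eqref{relationL^2} that you would have to prove separately. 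You also invoke ``relation \eqref{relationL^2} and its iterates'' to rewrite the intersection powers $L^k$; but \eqref{relationL^2} is an operator identity for $(L^2)_\ast$, not a cycle identity for $L^2$. The rewriting of $L^k$ in ${\rm CH}^\ast(X\times X)$ comes from the quadratic equation \eqref{eqquad}, and carrying it out (together with $L\cdot\Delta=\Delta_\ast(l)$) is exactly the kind of case-by-case computation you are trying to avoid.

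The paper's proof is much more direct and goes in the opposite direction: it shows ${}^{e^L}{\rm CH}^i(X)_s\subseteq{\rm CH}^i(X)_s$ for each $(i,s)$. The point is that membership in ${}^{e^L}{\rm CH}^i(X)_s$ unwinds to a single concrete vanishing $(L^k)_\ast Z=0$ (e.g.\ $(L^3)_\ast Z=0$ for $(i,s)=(2,0)$, $(L^2)_\ast Z=0$ for $(2,2)$ and $(3,0)$, $L_\ast Z=0$ for $(3,2)$). Expanding $L^k$ via \eqref{eqquad} and applying \eqref{relationL(l^2)=0}--\eqref{relationL^2} then yields the eigenvalue equation $H_\ast Z=(2i-4-s)Z$ in a few lines each; the codimension~$4$ pieces are read off from the explicit descriptions in Theorem~\ref{eigspace} and \cite[Theorem~4]{SV}. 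Your elementary lemma on matching direct-sum decompositions then finishes, exactly as you say. Even if your $\mathfrak{sl}_2$/Weyl picture can be made to work, proving the reflection identity would require the same expansions of $L^2$ and $L^3$ through \eqref{eqquad}, so the conceptual packaging does not shorten the argument; the paper's hands-on route is both simpler and complete.
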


The only place where the relation \eqref{relationL^2} is needed is the existence of the Fourier decomposition of ${\rm CH}^2(X)$, see the proof of \cite[Theorem~2.4]{SV}.

\begin{cor}\label{Abstauber}
Assumptions as in Theorem~\ref{agreeFourier}. In the eigenspace decomposition of Theorem~\ref{eigspace} all occuring direct summands are non-trivial. Moreover, Conjecture~\ref{multiplicativityConj} is true if $X = S^{[2]}$ is Hilbert scheme of two points of a projective $K3$~surface~$S$ and if $X = F(Y)$ is the Fano variety of lines of a \emph{very general} cubic fourfold~$Y$ with $L$ being Markman's lift in both cases. 
\end{cor}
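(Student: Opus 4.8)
The plan is to route everything through Theorem~\ref{agreeFourier}: since the eigenspace decomposition of $H_\ast$ coincides with the Fourier decomposition of Theorem~\ref{FourierDecomp}, I can import the geometric information available on the Fourier side from \cite{SV} and \cite{FLSV} directly into the statement of Theorem~\ref{eigspace}. In this way both assertions become statements about the Fourier decomposition of an admissible lift $L$, for which the cited references provide explicit computations.

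For the non-triviality I would treat the two types of summands separately. The leftmost column consists of the homologically non-trivial pieces ${\rm CH}^i(X)_0$, and for each of these it is enough to exhibit a single class with non-zero cohomology class: the fundamental class in ${\rm CH}^0(X)_0$, an ample divisor $a$ in ${\rm CH}^1(X)_0$, the self-intersection $a^2$ in ${\rm CH}^2(X)_0$, the product $l\cdot a$ in ${\rm CH}^3(X)_0$, and $l^2$ in ${\rm CH}^4(X)_0$, the last two being non-zero by the identification ${\rm CH}^4(X)_0=\langle l^2\rangle$ and the injectivity of $l\cdot(-)\colon{\rm CH}^1(X)\to{\rm CH}^3(X)_0$ recorded in Theorem~\ref{eigspace}. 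For the homologically trivial summands ($s>0$) the anchor is the deepest piece ${\rm CH}^4(X)_4$: because $X$ carries a holomorphic symplectic form, $h^{2,0}(X)\neq0$, so ${\rm CH}_0(X)={\rm CH}^4(X)$ is infinite-dimensional by Mumford--Roitman, while ${\rm CH}^4(X)_0\oplus{\rm CH}^4(X)_2=\langle l^2\rangle\oplus l\cdot L_\ast{\rm CH}^4(X)$ is detected by $L_\ast$, whereas $L_\ast$ kills ${\rm CH}^4(X)_4$; hence ${\rm CH}^4(X)_4\neq0$. The remaining pieces ${\rm CH}^2(X)_2$, ${\rm CH}^3(X)_2={\rm CH}^3(X)_{\rm hom}$ and ${\rm CH}^4(X)_2$ I would then reach by propagating non-vanishing along the operators $L_\ast$ and $l\cdot(-)$, which up to scalars realize the lowering and raising maps of the $\mathfrak{sl}_2$-action and restrict to isomorphisms between the relevant weight spaces; alternatively one quotes the explicit non-vanishing of the corresponding Fourier pieces computed in \cite{SV} and \cite{FLSV}, which cover precisely the geometries in which an admissible lift is known to exist.

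For the multiplicativity statement I would again invoke Theorem~\ref{agreeFourier} to replace each ${\rm CH}^i(X)_s$ by the Fourier piece ${}^{e^L}{\rm CH}^i(X)_s$, and then quote the theorems of Shen--Vial (for $X=S^{[2]}$) and of \cite{FLSV} (for $X=F(Y)$ with $Y$ very general) asserting that the Fourier decomposition is a bigraded subring under the intersection product; this is exactly Conjecture~\ref{multiplicativityConj} in these two cases. The one point requiring independent verification is that Markman's lift is \emph{admissible}, i.e.\ satisfies all of \eqref{eqquad}--\eqref{relationL^2} in both geometries, so that the cited multiplicativity — stated for an admissible lift — applies to it; here I would use the explicit descriptions of $L$ from Theorems~\ref{LdefHilb} and~\ref{Ldef}. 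I expect this matching step, together with the abstract non-vanishing of the deep homologically trivial pieces of the second paragraph, to be the main obstacle: the dimension count isolating ${\rm CH}^4(X)_4$ and the identification of Markman's lift with the Shen--Vial and \cite{FLSV} lifts are exactly the places where one must leave the soft formal framework of Theorem~\ref{agreeFourier} and use the specific geometry of $S^{[2]}$ and $F(Y)$.
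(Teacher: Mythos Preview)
Your overall strategy is exactly the paper's: use Theorem~\ref{agreeFourier} to identify the eigenspace pieces with the Fourier pieces, then pull the non-vanishing and the multiplicativity across from the Fourier side by citation. The paper does precisely this, citing \cite[p.~7]{SV} for the non-vanishing and \cite[Theorem~3]{SV} for the multiplicativity in both the $S^{[2]}$ and the very general Fano case (with \cite[Theorem~1.4]{NOY} together with Proposition~\ref{T_a=f_a} as an independent alternative for $S^{[2]}$).

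Two points deserve correction. First, your direct argument for ${\rm CH}^4(X)_4\neq 0$ via Mumford--Roitman does not close: you only show that ${\rm CH}^4(X)_0\oplus{\rm CH}^4(X)_2$ injects into $\mathbb{Q}\oplus L_\ast{\rm CH}^4(X)\subseteq\mathbb{Q}\oplus{\rm CH}^2(X)$, but ${\rm CH}^2(X)$ is itself not known to be finite-dimensional, so infinite-dimensionality of ${\rm CH}_0(X)$ does not force ${\rm CH}^4(X)_4\neq 0$. Your fallback of simply citing the non-vanishing results from \cite{SV} is what the paper does and is the clean way out; note, however, that the non-vanishing in the Corollary is stated for any $X$ satisfying the hypotheses of Theorem~\ref{agreeFourier}, not only for $S^{[2]}$ and $F(Y)$, so it is a general (if short) argument in \cite{SV} rather than a case-by-case computation. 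Second, for the Fano variety of lines of a very general cubic the multiplicativity is \cite[Theorem~3]{SV}, not \cite{FLSV}; the latter only upgrades a single inclusion to the non-very-general case, as recorded in Remark~\ref{rmkAbstauber}.
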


\begin{proof}
For the non-vanishing of every occurring direct summand, see \cite[p.~7]{SV}. For the multiplicativity in the $S^{[2]}$ case we even have two independent possibilities. One is \cite[Theorem~3]{SV} and the other is \cite[Theorem~1.4]{NOY} together with Proposition~\ref{T_a=f_a} showing that the two canonical lifts of the cohomological grading operator $h$ agree. The case of a Fano variety of lines $F(Y)$ of a very general cubic foufold~$Y$ is again \cite[Theorem~3]{SV}.
\end{proof}

\begin{rmk}\label{rmkAbstauber}
It becomes clear from the proof of \cite[Theorem~3]{SV} in the case of a Fano variety $F = F(Y)$ that the assumption on $Y$ to be very general is only needed for the inclusions
\begin{align*}
{\rm CH}^1(F)_0 \cdot {\rm CH}^2(F)_0 \subseteq {\rm CH}^3(F)_0,\\
{\rm CH}^2(F)_0 \cdot {\rm CH}^2(F)_0 \subseteq {\rm CH}^4(F)_0,
\end{align*}
and the first one was dealt with for $Y$ not necessarily very general in \cite[Proposition~A.7]{FLSV}. The second inclusion, however, still remains open for arbitrary smooth cubic fourfolds~$Y$. In the very general case these inclusions are even equalities.
\end{rmk}

\subsection*{Acknowledgments.}
I want to thank my advisor Georg Oberdieck for proposing the topic of my master's thesis -- from which the present work originated -- and for his constant encouragement and supply of ideas, as well as Daniel Huybrechts for a very helpful suggestion during my talk in the master's thesis seminar. Moreover, I had the great opportunity to attend a talk given by Mingmin Shen in December 2019, and I am very thankful for his kind and patient answers to my questions.

\section{Preliminaries}\label{pre}

\subsection{The Neron--Severi Lie algebra.}\label{NeronSeveri}
Let $X$ be a smooth projective variety over $\mathbb{C}$ of complex dimension $n$. An element $a \in H^2(X, \mathbb{Q})$ is called \emph{Lefschetz} if the conclusion of the hard Lefschetz theorem holds, i.e., for the operator $e_a$ of cup product by $a$ the $k$-fold iteration $e_a^k: H^{n-k}(X) \rightarrow H^{n+k}(X)$ is an isomorphism for all $1 \leq k \leq n$. The operator $e_a$ is called the \emph{Lefschetz operator}. If $a$ is Lefschetz, there exists a unique operator $f_a$, the \emph{Lefschetz dual}, such that the commutator $[e_a, f_a] = h$ is the \emph{grading operator}, given by multiplication with the integer $k-n$ in degree $k$. The triple $(e_a, f_a, h)$ satisfies the $\mathfrak{sl}_2$-commutation relations and is called a \emph{Lefschetz triple}. Looijenga and Lunts~\cite{LL} and Verbitsky~\cite{Ver} introduced the \emph{total Lie algebra} $\mathfrak{g}(X) \subseteq {\rm End}_{\mathbb{Q}}(H^\ast(X))$ of $X$ which is generated by all Lefschetz triples $(e_a, f_a, h)$. The \emph{Neron--Severi Lie algebra} of $X$ is the Lie subalgebra $\mathfrak{g}_{\text{NS}}(X) \subseteq \mathfrak{g}(X)$ generated by only those Lefschetz triples where $a$ is algebraic, i.e., $a \in H^{1,1}(X, \mathbb{Q})$. In view of the Grothendieck standard conjecture of Lefschetz type, a natural question to ask, then, is whether the Neron--Severi Lie algebra action on the cohomology ring can be lifted to an action on the Chow ring or, slightly stronger, whether there is a Lie algebra homomorphism $\mathfrak{g}_{\text{NS}}(X) \rightarrow {\rm CH}^\ast(X \times X)$ to the ring of correspondences ${\rm CH}^\ast(X \times X)$ such that
\begin{center}
\begin{tikzcd}
\mathfrak{g}_{\text{NS}}(X) \ar[r] \ar[dr, hook]
	& {\rm CH}^\ast(X \times X) \ar[d, "{\rm cl}"] \\
	& {\rm End}_{\mathbb{Q}}(H^\ast(X, \mathbb{Q}))
\end{tikzcd}
\end{center}
commutes, where ${\rm cl}$ denotes the cycle class map. By the main theorem of~\cite{O}, this is the case if $X$ is the Hilbert scheme of points of a projective $K3$~surface. We investigate this question for the Fano variety of lines of a smooth cubic fourfold, obtaining Theorem~\ref{LieChow} as a partial analog. We view the Fano variety of lines as a test case for Conjecture~\ref{MainConj}.

\subsection{Markman's lift of the Beauville--Bogomolov class.}\label{MarkmanLift}
Let $X$ be a hyperkähler variety and $(-,-): H^2(X, \mathbb{Q}) \times H^2(X, \mathbb{Q}) \rightarrow \mathbb{Q}$ the non-degenerate symmetric bilinear form associated to the Beauville--Bogomolov quadratic form on $X$ \cite{HuyInv}. Via the Künneth isomorphism we obtain from this the Beauville--Bogomolov cohomology class $\mathfrak{B} \in H^4(X \times X, \mathbb{Q})$. Extending the coefficients to $\mathbb{C}$ and choosing an orthonormal basis $(e_i)$ of $H^2(X, \mathbb{C})$ with respect to $(-,-)$, we can write $\mathfrak{B} = \sum_{i=1}^r e_i \otimes e_i$, where $r = b_2(X)$ is the second Betti number. We denote by $\mathfrak{b} := \Delta^\ast(\mathfrak{B}) \in H^4(X, \mathbb{Q})$ the pullback along the diagonal embedding $\Delta: X \hookrightarrow X \times X$. Over $\mathbb{C}$ we can write $\mathfrak{b} = \sum_{i=1}^r e_i^2$.

\begin{thm}[{\cite{Markman}}] \label{existenceL}
Let $X$ be a hyperkähler variety deformation equivalent to the Hilbert scheme of $n \geq 2$ points of a $K3$ surface. Then there exists a lift $L \in {\rm CH}^2(X \times X)$ of the Beauville--Bogomolov class $\mathfrak{B}$.
\end{thm}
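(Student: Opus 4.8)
The plan is to establish the statement first for the concrete model case in which $X$ is a moduli space of Gieseker-stable sheaves on a $K3$ surface, where a universal family furnishes an explicit algebraic cycle, and then to propagate the result across the whole deformation class. First I would reduce to this model. Every hyperkähler variety of $K3^{[n]}$-type is deformation equivalent to some moduli space $M = M_H(\mathbf{v})$ of $H$-stable sheaves on a projective $K3$ surface $S$ with primitive Mukai vector $\mathbf{v}$ and $H$ generic; indeed $\mathrm{Hilb}^n(S)$ itself arises this way with $\mathbf{v} = (1,0,1-n)$. On $S \times M$ such a moduli space carries a (quasi-)universal sheaf $\mathcal{E}$, which I would use to manufacture a cycle on $M \times M$.

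Working on $S \times M \times M$, I would form the complex $R\mathcal{H}om$ of the two pullbacks of $\mathcal{E}$ (one from each copy of $M$) and push it forward to $M \times M$ along the projection that forgets $S$. The Chern classes of the resulting perfect complex are algebraic cycle classes on $M \times M$, and a fixed universal polynomial in them, corrected by pullbacks of tautological divisor and point classes from the two factors, is the candidate for $L$.

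Next I would verify that this candidate lifts $\mathfrak{B}$ cohomologically. The key input is the Mukai-lattice description of the cohomology of moduli spaces: $H^2(M)$ is identified with the orthogonal complement $\mathbf{v}^\perp$ inside the Mukai lattice $\widetilde{H}(S)$, under which the Beauville--Bogomolov form corresponds to the Mukai pairing. Since the push-pull of the universal sheaf computes, in cohomology, precisely the reproducing kernel of the Mukai pairing — this is the content of the Mukai/O'Grady/Markman computation of the Künneth components of the diagonal of $M$ — the relevant Chern-class combination equals $\mathfrak{B}$ up to algebraic correction terms supported in the other Künneth pieces, which the normalization in the previous step removes.

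Finally I would pass from $M$ to an arbitrary $X$ in the deformation class, and this is the step I expect to be the main obstacle: since ${\rm CH}^2$ is not a deformation invariant, the cohomological equality on $M$ does not formally transport to $X$. The remedy is a spreading-out argument — realize $X$ and $M$ as fibres of a family of hyperkähler varieties, relate $X \times X$ and $M \times M$ through the relative self-product, and propagate the cycle using that the universal-sheaf construction, or equivalently the associated monodromy-invariant characteristic class of Markman, extends over the family and specializes correctly. Controlling this deformation of actual algebraic cycles, rather than merely of cohomology classes, is the genuinely delicate part, and is where Markman's parallel-transport and monodromy techniques for $K3^{[n]}$-type varieties carry the argument.
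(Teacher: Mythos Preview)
Your outline matches the paper's summary of Markman's construction almost exactly: start on a moduli space $M_H(\mathbf v)$, build a sheaf on $M_H(\mathbf v)\times M_H(\mathbf v)$ from the (twisted) universal sheaf via relative $\mathrm{Ext}$, verify cohomologically using the Mukai-lattice description of $H^2$, then transport to the general fibre $X$. So the approach is the same.

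Two points where your sketch is less precise than the paper's account and where the actual content lives. First, the universal sheaf on $S\times M_H(\mathbf v)$ is in general only a \emph{twisted} sheaf, and so is the resulting object on $M_H(\mathbf v)\times M_H(\mathbf v)$; one therefore works with its $\kappa$-class $\kappa_2$ (equivalently, with the Chern classes of the associated Azumaya algebra $\mathcal{E}nd(M)$), which kills the twist and gives an honest class in ${\rm CH}^2$. Your ``Chern classes \ldots corrected by pullbacks'' is the right shape, but the twist is why this particular normalization is forced. Second, and more substantively, the transport from $M_H(\mathbf v)$ to $X$ is not a generic spreading-out or specialization of cycles: the mechanism is that the relevant (twisted) sheaf is \emph{hyperholomorphic}, hence deforms along twistor lines, and the moduli space $\mathfrak{M}_\Lambda$ is twistor-path connected. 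One deforms the \emph{sheaf} $\mathcal{G}$ over a chain $C$ of twistor lines, as in the paper's Theorem~\ref{MarkmanSheaf}, and then takes $\kappa_2$ of the specialized sheaf $\mathcal{G}_{t_1}$ on $X\times X$. So the delicate step you flag is resolved not by deforming cycles but by deforming sheaves and taking characteristic classes afterwards; this is the point your last paragraph gestures at but does not name.
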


A summary of Markman's construction of $L$ in \cite{Markman} is given in \cite[Section 1.9]{SV}, particularly Theorem~9.15 where Shen and Vial use the $\kappa$-class $\kappa_2(M) \in {\rm CH}^2(X \times X)$ of \emph{Markman's sheaf} $M$, a twisted sheaf in the sense of \cite[Definition~2.1]{Markman}, in order to define $L$. The twisted sheaf $M$ is constructed as in the theorem below, following closely \cite[Theorem~9.12]{SV}.

\begin{thm}[\cite{Markman}]\label{MarkmanSheaf}
Let $X$ be a hyperkähler manifold of $K3^{[n]}$-type. There exists a $K3$ surface $S$ and a suitable Mukai vector $v$ with a $v$-generic ample divisor $H$ such that there is a proper flat family $\pi: \mathcal{X} \rightarrow C$ having the following properties:
\begin{enumerate}
\item The curve $C$ is connected but possibly reducible of arithmetic genus $0$.
\item There exist $t_1, t_2 \in C$ such that $\mathcal{X}_{t_1} = X$ and $\mathcal{X}_{t_2}= M_H(v)$, the latter denoting the moduli space of stable sheaves on $S$ with Mukai vector $v$.
\item There is a twisted universal sheaf $\mathcal{E}$ on $M_H(v) \times S$.
\item There is a torsion-free reflexive coherent twisted sheaf $\mathcal{G}$ on $\mathcal{X} \times_C \mathcal{X}$, flat over $C$, satisfying $\mathcal{G}_{t_2} \cong \mathcal{E}xt^1_{p_{13}}(\mathcal{E}_{12}, \mathcal{E}_{32})$ on $M_H(v) \times M_H(v)$.
\end{enumerate}
\end{thm}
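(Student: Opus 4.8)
The plan is to realize $X$ as a member of an honest algebraic family that also contains a moduli space of sheaves on a $K3$ surface, and then to spread the relevant relative Ext-sheaf out over that family. Since $X$ is of $K3^{[n]}$-type, it is by definition deformation equivalent to $S_0^{[n]}$ for some projective $K3$ surface $S_0$, and $S_0^{[n]}$ is itself the moduli space of ideal sheaves of length-$n$ subschemes. More generally, by Yoshioka's theorem, for a $K3$ surface $S$, a primitive Mukai vector $v$ of self-intersection $v^2 = 2n-2$ and a $v$-generic ample divisor $H$, the moduli space $M_H(v)$ of $H$-stable sheaves with Mukai vector $v$ is a smooth projective hyperkähler manifold of $K3^{[n]}$-type. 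Choosing $S$, $v$ and $H$ so that $M_H(v)$ lies in the deformation class of $X$, both varieties represent points of a single connected component of the moduli space of marked hyperkähler manifolds of $K3^{[n]}$-type. The first task is then to upgrade this abstract connectedness to an actual family: one links $X$ and $M_H(v)$ through finitely many one-parameter families of smooth projective hyperkähler varieties, so that the base $C$ is a tree of rational curves, connected of arithmetic genus $0$, with two distinguished fibers $\mathcal{X}_{t_1}\cong X$ and $\mathcal{X}_{t_2}\cong M_H(v)$. This yields (1) and (2).

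For (3), the moduli space $M_H(v)$ carries a universal family only up to a Brauer obstruction living in the Brauer group of $M_H(v)$; in general one obtains only a twisted universal sheaf $\mathcal{E}$ on $M_H(v)\times S$, twisted by a Brauer class pulled back from the first factor. Its existence is standard for moduli of stable sheaves on $K3$ surfaces. With $\mathcal{E}$ in hand, on the triple product $M_H(v)\times S\times M_H(v)$ one forms the two pullbacks $\mathcal{E}_{12}$ and $\mathcal{E}_{32}$ along the two copies of $M_H(v)\times S$ and takes the relative extension sheaf $\mathcal{E}xt^1_{p_{13}}(\mathcal{E}_{12},\mathcal{E}_{32})$ along the projection $p_{13}$ onto the two moduli factors. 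The smoothness of $M_H(v)$ together with the standard behaviour of $\mathrm{Ext}$-groups of stable sheaves on a $K3$ surface shows that this sheaf is torsion-free and reflexive, producing the fiberwise object $\mathcal{G}_{t_2}$.

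The heart of the matter, and what I expect to be the main obstacle, is part (4): extending this reflexive twisted sheaf from the single fiber $M_H(v)\times M_H(v)$ to a torsion-free reflexive twisted sheaf $\mathcal{G}$ on the relative self-product $\mathcal{X}\times_C\mathcal{X}$, flat over $C$. The strategy is to propagate the sheaf along the family by deformation theory. One first extends the Brauer twist over $\mathcal{X}\times_C\mathcal{X}$, most conveniently encoded as a gerbe or as a relative Azumaya algebra, so that the notion of twisted sheaf is defined over the entire base; here the arithmetic genus $0$ of $C$, which forces $H^1(C,\mathcal{O}_C)=0$, helps to ensure that this twisting datum extends coherently and essentially uniquely. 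One then extends the sheaf itself: reflexive sheaves are recovered from their restriction to the complement of a locus of codimension $\geq 2$ as reflexive hulls, so one may first extend over the locally free locus, where the obstruction to lifting along an infinitesimal thickening of the fiber lies in a relative $\mathrm{Ext}^2$-group, and then take the reflexive hull to globalize. The delicate points are to keep the resulting sheaf flat over $C$ while preserving reflexivity on the total space, using that $\mathcal{X}\times_C\mathcal{X}$ is smooth over $C$ and that a sheaf which is flat over the base and reflexive on fibers is reflexive on the total space, and to guarantee compatibility with the twist at every stage. Carrying this out produces the desired $\mathcal{G}$ with $\mathcal{G}_{t_2}\cong\mathcal{E}xt^1_{p_{13}}(\mathcal{E}_{12},\mathcal{E}_{32})$, completing (4).
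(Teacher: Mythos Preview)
The paper does not supply its own proof of this theorem; it is quoted from Markman's work and used as a black box (the paper only explains afterwards how to extract $\kappa_2(M)$ from it). So there is no argument in the paper to compare your proposal against directly.

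That said, your reconstruction has a genuine gap at the most important step. You propose to connect $X$ and $M_H(v)$ by a chain of one-parameter families of \emph{smooth projective} hyperkähler varieties, and then to extend the Ext-sheaf along this family by ordinary deformation theory, with obstructions in $\mathrm{Ext}^2$. But this is not what is actually done, and it is not clear that it can be made to work. The curve $C$ in Markman's construction is a \emph{twistor path}: as the paper itself stresses in the paragraph following Proposition~\ref{propMarkmanSheaf}, the irreducible components of $C$ are mapped isomorphically onto twistor lines in the moduli space $\mathfrak{M}_\Lambda$. The fibers of $\mathcal{X} \to C$ over a generic point of a twistor line are non-projective, indeed non-algebraic, hyperkähler manifolds. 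So the family is complex-analytic, not algebraic, and your appeal to algebraic deformation theory of coherent sheaves does not apply.

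The mechanism that actually makes (4) work is Verbitsky's theory of hyperholomorphic sheaves: the relative Ext-sheaf $\mathcal{E}xt^1_{p_{13}}(\mathcal{E}_{12},\mathcal{E}_{32})$ is shown to be $\omega$-stable hyperholomorphic on $M_H(v)\times M_H(v)$, and hyperholomorphic reflexive sheaves extend canonically along twistor deformations (this is the content of Verbitsky's and Markman's results, not a generic obstruction-theory argument). Your outline misses this key input entirely, and the vanishing-of-obstructions step you sketch would be very hard to justify without it.
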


Then set \emph{Markman's sheaf} to be the twisted sheaf $M := \mathcal{G}_{t_1}$ on $X \times X$. \emph{A priori}, $M$ might depend on the chosen deformation, but as it turns out, $\kappa_2(M)$ does not. As in \cite{Markman2} we denote by $\mathfrak{M}_\Lambda$ the moduli space of isomorphism classes of marked hyperkähler manifolds.

\begin{prop}[\cite{Markman2}]\label{propMarkmanSheaf}
The $\kappa$-class $\kappa_2(M)$ is independent of the chosen deformation and starting point $M_H(v) \in \mathfrak{M}_\Lambda$.
\end{prop}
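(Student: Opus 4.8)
The plan is to realize $\kappa_2(M)$ as the fiberwise restriction of a single global class on the total space $\mathcal{X}\times_C\mathcal{X}$ and to use the genus-$0$ hypothesis on $C$ to transport the computation to the moduli-of-sheaves fiber over $t_2$, where the class is manifestly canonical; independence of the remaining choices is then reduced to the connectedness of the moduli space $\mathfrak{M}_\Lambda$ together with the monodromy-equivariance of the $\kappa$-class. First I would recall that the relevant class is the codimension-$2$ part of the normalized Chern character ${\rm ch}(\mathcal{F})\exp(-c_1(\mathcal{F})/{\rm rk}(\mathcal{F}))\sqrt{{\rm td}}$, which is invariant under $\mathcal{F}\mapsto\mathcal{F}\otimes\mathcal{L}$ for line bundles $\mathcal{L}$. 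This twist-invariance is precisely what renders $\kappa_2$ well-defined for the twisted sheaf $M$, independently of any choice of twisted local structure, and it is the feature I would exploit throughout.

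Next, for a fixed family $\pi:\mathcal{X}\to C$ with $\mathcal{G}$ on $\mathcal{X}\times_C\mathcal{X}$ flat over $C$, I would form the relative class $\kappa_2(\mathcal{G})\in{\rm CH}^2(\mathcal{X}\times_C\mathcal{X})$ from the relative Chern character and relative Todd class of $\pi$. Flatness ensures that its restriction to the fiber over $t\in C$ is $\kappa_2(\mathcal{G}_t)$, so that over $t_1$ we recover $\kappa_2(M)$ and over $t_2$ the universal class $\kappa_2(\mathcal{E}xt^1_{p_{13}}(\mathcal{E}_{12},\mathcal{E}_{32}))$ on $M_H(v)\times M_H(v)$. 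Because $C$ is connected of arithmetic genus $0$, hence a simply connected tree of rational curves, the Gauss--Manin local system of the fibers has trivial monodromy; the cohomological parallel transport between $t_1$ and $t_2$ is therefore canonical and carries the class of $\kappa_2(\mathcal{G}_{t_2})$ to that of $\kappa_2(M)$. This already pins down the cohomology class of $\kappa_2(M)$ as the Beauville--Bogomolov class $\mathfrak{B}$.

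To obtain independence of the starting point, I would assemble these classes into a section of the local system over $\mathfrak{M}_\Lambda$ whose fiber over a marked pair is the relevant piece of the $H^4$ of the self-product. Markman's parallel-transport operators realize the monodromy of this family, and the $\kappa$-class of the universal Ext-sheaf is a monodromy-invariant universal class, computable through Grothendieck--Riemann--Roch purely from the Mukai vector $v$ and the Mukai pairing. Connectedness of $\mathfrak{M}_\Lambda$ for the fixed deformation type then forces the transported class to be independent of the chosen endpoint $M_H(v)$ and of the path realizing the deformation.

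The main obstacle is that all of the transport and monodromy-invariance above lives naturally in cohomology, whereas the assertion concerns the class in ${\rm CH}^2$, where there is no Gauss--Manin connection. The resolution is that the comparison must be supplied by the algebra: the single global class $\kappa_2(\mathcal{G})\in{\rm CH}^2(\mathcal{X}\times_C\mathcal{X})$ together with the restriction (specialization) maps to the Chow groups of the fibers plays the role of parallel transport, and the arithmetic-genus-$0$ hypothesis on $C$ is exactly what makes these specialization maps canonical and path-independent; in particular ${\rm Pic}^0(C)=0$ rules out the line-bundle ambiguities on the base that could otherwise let the Chow class vary from fiber to fiber. Verifying this Chow-theoretic constancy rigorously, rather than merely its cohomological shadow, is the crux of the proof.
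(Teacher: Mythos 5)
Your strategy has a genuine gap at exactly the point you yourself flag as the crux, and the devices you propose cannot close it. The assertion compares $\kappa_2(M)$ and $\kappa_2(M')$ for \emph{two different} families $\pi: \mathcal{X} \rightarrow C$ and $\pi': \mathcal{X}' \rightarrow C'$ passing through the same fiber $X$, i.e.\ for two a priori different twisted sheaves $M, M'$ on $X \times X$. Your specialization mechanism never touches this comparison: within a single family, the global class $\kappa_2(\mathcal{G}) \in {\rm CH}^2(\mathcal{X} \times_C \mathcal{X})$ only tells you that $\kappa_2(M)$ and $\kappa_2(\mathcal{G}_{t_2})$ are both restrictions of one common class, and in the Chow group this does not determine $\kappa_2(M)$ --- there is no parallel transport on Chow groups, two global classes agreeing over $t_2$ may differ over $t_1$ (Gysin restriction to the fiber over $t_1$ does not annihilate classes supported over $t_1$), and ${\rm Pic}^0(C) = 0$ together with triviality of the Gauss--Manin monodromy over a rational tree only controls the cohomological shadow, which indeed recovers $\mathfrak{B}$ but is far weaker than the Chow-theoretic claim. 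Similarly, your appeal to Grothendieck--Riemann--Roch ``purely from the Mukai vector'' is a cohomological computation, and for the twisted sheaf $\mathcal{E}$ there is no honest universal family to which a Chow-level GRR argument applies. So the final paragraph of your proposal correctly identifies the missing step but supplies no argument for it.

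The paper closes this gap by working at the level of sheaves rather than cycle classes: by \cite[Theorem~1.11]{Markman2}, the Azumaya algebra $\mathcal{E}nd(M)$ on $X \times X$ is \emph{itself} independent of the parametrized twistor path and of the starting point $M_H(v) \in \mathfrak{M}_\Lambda$, up to replacing $M$ by its dual --- this is the deep input (proved by hyperholomorphic-sheaf techniques), and it is where the genus-$0$ twistor paths are actually used, namely in Markman's definition of parametrized twistor paths mapping components isomorphically onto twistor lines, not in any $H^1(C, \mathcal{O}_C) = 0$ specialization argument as in your sketch. Then, by \cite[Lemma~2.4]{Markman}, $\kappa_2(M)$ is a rational multiple of $c_2(\mathcal{E}nd(M))$, which is insensitive to $M \mapsto M^\vee$, so independence in ${\rm CH}^2(X \times X)$ follows at once. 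Note that your proposal also never addresses the ``up to dualization'' ambiguity, which is essential and is absorbed precisely by passing to $c_2(\mathcal{E}nd(M))$; and, as a minor point, in Markman's normalization $\kappa(\mathcal{F}) = {\rm ch}(\mathcal{F}) \exp(-c_1(\mathcal{F})/{\rm rk}(\mathcal{F}))$, the factor $\sqrt{{\rm td}}$ belonging to the Mukai vector rather than to the $\kappa$-class.
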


In the paragraph preceding Definition~6.16 of \cite{Markman} the notion of a \emph{parametrized twistor path} $\gamma: C \rightarrow \mathfrak{M}_\Lambda$ is defined. We stress that Markman requires $\gamma$ to map the irreducible components of the possibly reducible curve $C$ isomorphically onto twistor lines in the moduli space $\mathfrak{M}_\Lambda$. Now, by \cite[Theorem~1.11]{Markman2}, the Azumaya algebra $\mathcal{E}nd(M)$ (see \cite[Definition~1.1]{Markman2}) associated with $M$ is independent of the twistor path and also of the starting point $M_H(v) \in \mathfrak{M}_{\Lambda}$ up to possibly dualizing $M$. Hence, $\kappa_2(M)$ is independent of both as it is a rational multiple of the second Chern class of $\mathcal{E}nd(M)$, see \cite[Lemma~2.4]{Markman}. Shen and Vial now define first
\begin{equation}\label{eqldef}
l := \frac{1}{{\rm deg}(c_{2n}(X))} (p_2)_\ast \Big( \kappa_2(M) \cdot (p_1)^\ast(c_{2n}(X)) \Big) + c_2(X) \in {\rm CH}^2(X),
\end{equation}
where $p_i: X \times X \rightarrow X$ are the projections and $c_i(X)$ are the Chern classes of the tangent bundle of $X$. This is indeed a lift of $\mathfrak{b}$ by \cite[Lemma~9.14]{SV}. They then set
\begin{equation}\label{eqLdef}
L := - \kappa_2(M) + \frac{1}{2} (p_1)^\ast(l - c_2(X)) + \frac{1}{2} (p_2)^\ast(l - c_2(X)) \in {\rm CH}^2(X \times X),
\end{equation}
which is indeed a lift of $\mathfrak{B}$ by the same lemma. By Proposition~\ref{propMarkmanSheaf}, $L$ and $l$ are, in fact, canonical lifts of their cohomology classes, and we are justified in calling $L$ \emph{Markman's lift} of $\mathfrak{B}$ without ambiguity. The equation $\Delta^\ast(L) = l$ holds if and only if $l$ is a rational multiple of $c_2(X)$ in ${\rm CH}^2(X)$.

\subsection{Markman's lift in terms of tautological classes for the Hilbert scheme.}\label{ExplicitLiftHilb}
In the case of the Hilbert scheme $S^{[2]}$ of two points of a projective $K3$~surface~$S$ we have an explicit description of Markman's lift of $\mathfrak{B}$, see \cite{SV} for details.\\
Denote $F = S^{[2]}$ and let $\mathcal{Z} \subseteq F \times S$ be the universal family. Its set of closed points consists of the pairs $(\eta, x)$ where $x \in {\rm supp}(\eta)$. This is a codimension $2$ closed subscheme of the product, and we denote by
\begin{center}
\begin{tikzcd}
\mathcal{Z} \ar[drr, bend left, "q"] \ar[ddr, bend right, "p"'] \ar[dr, hook] \\

	& F \times S \ar[r, "\rho"] \ar[d, "\pi"']
		& S \\

	& F
\end{tikzcd}
\end{center}
the projections. Let $c \in {\rm CH}_0(S)$ be the canonical $0$-cycle, represented by any point on a rational curve in $S$ \cite[Theorem~1]{BV}. We let
\begin{equation}
S_c := p_\ast q^\ast(c) \in {\rm CH}^2(F).
\end{equation}
Moreover, denote by $\Delta_{\rm Hilb} \in {\rm CH}^1(F)$ the divisor class on $F$ parametrizing the non-reduced length $2$ subschemes of $S$ and set
\begin{equation}
\delta := \frac{1}{2} \Delta_{\rm Hilb} \in {\rm CH}^1(F).
\end{equation}
This agrees with the convention of \cite{SV} which differs in the sign from \cite{O}. Finally, let $I \subseteq F \times F$ be the subset of pairs of length $2$ subschemes which share a common support point. This is closed and irreducible (see the proof of \cite[Lemma~11.2]{SV}), and endowed with the reduced induced subscheme structure it gives a closed subvariety of codimension~$2$, called the \emph{incidence subscheme}. Its cycle class in ${\rm CH}^2(F \times F)$ is also denoted $I$ and called the \emph{incidence correspondence}. By \mbox{\emph{loc. cit.}} we have $I = {^t \mathcal{Z}} \circ \mathcal{Z}$.

\begin{thm}[{\cite[p.~67]{SV}}]\label{LdefHilb}
Let $F = S^{[2]}$ for a projective $K3$ surface $S$. An explicit lift $L$ of the Beauville--Bogomolov class $\mathfrak{B}$ is given by
\begin{equation}\label{LHilb}
L = I - 2 (S_c)_1 - 2 (S_c)_2 - \frac{1}{2} \delta_1 \delta_2.
\end{equation}
Moreover, by \cite[Proposition~16.1]{SV}, $L$ agrees with Markman's lift. For its pullback along the diagonal embedding $i_{\Delta} : F \hookrightarrow F \times F$ we have
\begin{equation}\label{lHilb}
l := i_{\Delta}^\ast(L) = 20 S_c - \frac{5}{2} \delta^2.
\end{equation}
In fact, $l = \frac{5}{6} c_2(T_F)$ where $c_2(T_F)$ is the second Chern class of the tangent bundle.
\end{thm}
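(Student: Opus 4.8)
The plan is to prove the statement in three steps: the cohomological lift $\mathrm{cl}(L) = \mathfrak{B}$, the diagonal pullback \eqref{lHilb} together with its proportionality to $c_2(T_F)$, and the identification with Markman's canonical class of Theorem~\ref{existenceL}. For the first step I would run a Künneth calculation. Since $H^{\mathrm{odd}}(S^{[2]}, \mathbb{Q}) = 0$ and $\mathfrak{B} = \sum_i e_i \otimes e_i$ lies entirely in $H^2(F) \otimes H^2(F)$, it suffices to control the $(4,0)$, $(2,2)$ and $(0,4)$ Künneth components of the proposed lift. Using $I = {}^t\mathcal{Z} \circ \mathcal{Z}$ I would expand $[\mathcal{Z}] \in H^4(F \times S)$ into its Künneth pieces in terms of tautological generators and then compose. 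The terms $-2(S_c)_1$ and $-2(S_c)_2$ are designed precisely to cancel the $(4,0)$ and $(0,4)$ parts of $\mathrm{cl}(I)$, while the $(2,2)$ part of $\mathrm{cl}(I) - \tfrac12 \delta_1 \delta_2$ must reproduce the Beauville--Bogomolov form under the splitting $H^2(F) = H^2(S) \oplus \mathbb{Q}\delta$, with $(\delta,\delta) = -2$ and the K3 intersection form on $H^2(S)$. The summand $-\tfrac12 \delta_1 \delta_2$ supplies exactly the $\delta \otimes \delta$ contribution, and matching the $H^2(S) \otimes H^2(S)$ part against $\sum_i e_i \otimes e_i$ is a direct computation using the canonical cycle $c$ and Beauville--Voisin \cite{BV}.

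Second, I would compute $l = i_\Delta^\ast(L)$ term by term. The projection formula gives $i_\Delta^\ast((S_c)_j) = S_c$ and $i_\Delta^\ast(\delta_1 \delta_2) = \delta^2$ immediately, so the only substantial input is $i_\Delta^\ast(I)$. As the diagonal $\Delta_F$ lies inside the incidence locus, this is an excess/self-intersection computation, which I would carry out through $I = {}^t\mathcal{Z} \circ \mathcal{Z}$ by reducing $i_\Delta^\ast(I)$ to a push-pull over $\mathcal{Z} \times_F \mathcal{Z}$ and re-expressing the answer in $S_c$ and $\delta^2$. Assembling the four contributions gives $l = i_\Delta^\ast(I) - 4 S_c - \tfrac12 \delta^2$, so the target \eqref{lHilb} is equivalent to the identity $i_\Delta^\ast(I) = 24 S_c - 2\delta^2$. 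The proportionality $l = \tfrac56 c_2(T_F)$ I would then obtain from the standard description of the tangent bundle of $S^{[2]}$ via the universal ideal sheaf: applying Grothendieck--Riemann--Roch to $\mathcal{E}xt^1_\pi(\mathcal{I}_\mathcal{Z}, \mathcal{I}_\mathcal{Z})$ and writing $c_2(T_F)$ as a combination of $S_c$ and $\delta^2$ yields $c_2(T_F) = 24 S_c - 3\delta^2$, whence $\tfrac56 c_2(T_F) = 20 S_c - \tfrac52 \delta^2 = l$.

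Finally, the agreement of \eqref{LHilb} with Markman's canonical lift is the step I expect to be the real obstacle, since it requires matching the explicit tautological formula against the definition \eqref{eqLdef} in terms of $\kappa_2(M)$, which is where the deformation-theoretic content of Theorem~\ref{existenceL} and Proposition~\ref{propMarkmanSheaf} enters. Here I would use that $S^{[2]}$ is itself a moduli space of (twisted, ideal) sheaves on $S$, so that Markman's sheaf $M = \mathcal{E}xt^1$ and hence $\kappa_2(M)$ admit an explicit description through the universal sheaf; substituting this into \eqref{eqLdef} and identifying the outcome with $I - 2(S_c)_1 - 2(S_c)_2 - \tfrac12 \delta_1 \delta_2$ is the heart of the matter, and is \cite[Proposition~16.1]{SV}. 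The difficulty is bridging the abstract, deformation-invariant construction of $\kappa_2(M)$ with the concrete incidence geometry; it is precisely this comparison, rather than the Künneth or Chern-class bookkeeping, that carries the weight of the proof.
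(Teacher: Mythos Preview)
The paper does not give a proof of this theorem at all: it is stated as a citation, with the formula \eqref{LHilb} attributed to \cite[p.~67]{SV}, the agreement with Markman's lift to \cite[Proposition~16.1]{SV}, and the identities \eqref{lHilb} and $l = \tfrac{5}{6}c_2(T_F)$ likewise taken from \cite{SV}. So there is nothing in the present paper to compare your proposal against; the result functions here purely as input.

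That said, your outline is a faithful sketch of how the cited arguments in \cite{SV} actually run. The K\"unneth bookkeeping for $\mathrm{cl}(L) = \mathfrak{B}$, the reduction of $i_\Delta^\ast(L)$ to the excess-intersection identity $i_\Delta^\ast(I) = 24 S_c - 2\delta^2$, and the GRR computation giving $c_2(T_F) = 24 S_c - 3\delta^2$ are all correct in shape and in arithmetic. Your assessment of where the weight lies is also right: the first two steps are routine, while the identification with Markman's lift via an explicit description of $\kappa_2(M)$ on $S^{[2]} = M_H(1,0,-1)$ is the substantive part, and that is exactly the content of \cite[Proposition~16.1]{SV}.
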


\subsection{Markman's lift in terms of tautological classes for the Fano variety.}\label{ExplicitLiftFano}

For the Fano variety of lines $F := F(Y) \subseteq {\rm Gr}(2,6)$ of a smooth cubic fourfold $Y$ we define several tautological cycle classes as follows.

\begin{defn}
	Let $\mathcal{E}$ be the tautological bundle on ${\rm Gr}(2,6)$. Let $g \in {\rm CH}^1(F(Y))$ be the first Chern class $c_1(\mathcal{E}|_{F(Y)}^\vee)$. Then $g$ is called the \emph{Plücker polarization class}. Moreover, we denote by $c \in {\rm CH}^2(F(Y))$ the second Chern class $c_2(\mathcal{E}|_{F(Y)}^\vee)$.
\end{defn}

\begin{defn}
	The \emph{incidence subscheme} is the closed subset $I \subseteq F \times F$ with the reduced subscheme structure, given by the set of pairs of intersecting lines inside $Y$. Its cycle class in ${\rm CH}^2(F \times F)$, also denoted $I$, is called the \emph{incidence correspondence}. The \emph{tautological subring} $R^\ast(F \times F) \subseteq {\rm CH}^\ast(F \times F)$ is the $\mathbb{Q}$-subalgebra generated by $I, \Delta, c_1, c_2, g_1, g_2$ where $\Delta \subseteq F \times F$ denotes the diagonal and $g_i$, $c_i$ for $i = 1, 2$ are the pullbacks via the two projections $F \times F \rightarrow F$.
\end{defn}

\begin{thm}[{\cite[p. 81]{SV}}]\label{Ldef}
	Let $Y$ be a smooth cubic fourfold and $F = F(Y)$ its Fano variety of lines. An explicit lift $L$ of the Beauville--Bogomolov class $\mathfrak{B}$ is given by
	\begin{equation}\label{eqLdefFano}
	L = \frac{1}{3}(g_1^2 + \frac{3}{2}g_1g_2 + g_2^2 - c_1 - c_2) - I.
	\end{equation}
	Moreover, we have $l := \Delta^\ast(L) = \frac{5}{6} c_2(T_F)$, where $c_2(T_F)$ is the second Chern class of the tangent bundle, and $c_2(T_F) = 5g^2 - 8c$. Hence, the tautological subring $R^\ast(F \times F)$ contains $L, l_1, l_2$. By the quadratic equation \eqref{eqquad} below for $L$, $R^\ast(F \times F)$ can in fact be generated by $L, l_1, l_2, g_1, g_2$.
\end{thm}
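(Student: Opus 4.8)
The plan is to prove the four assertions contained in the statement in turn: the cohomological identity $[L]=\mathfrak{B}$, the identities $l=\Delta^\ast L=\tfrac{5}{6}c_2(T_F)$, the Chern class computation $c_2(T_F)=5g^2-8c$, and finally the (purely algebraic) generation statements. By Beauville--Donagi, $F=F(Y)$ is a hyperkähler fourfold of $K3^{[2]}$-type, so the Beauville--Bogomolov class $\mathfrak{B}$ exists and Markman's lift $L$ from \eqref{eqLdef} is defined; the task is to match it with the explicit tautological expression \eqref{eqLdefFano}.

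The heart of the matter is $[L]=\mathfrak{B}$, which I would verify Künneth component by component. Writing $\mathfrak{B}=\sum_i e_i\otimes e_i$ for an orthonormal basis $(e_i)$ of $H^2(F,\mathbb{C})$, the class $\mathfrak{B}$ lies entirely in the summand $H^2(F)\otimes H^2(F)$; since $H^{\mathrm{odd}}(F)=0$ and $I$ has codimension $2$, the class $[I]$ has components only of type $(4,0)$, $(2,2)$ and $(0,4)$. For the $(4,0)$-part one computes the class in $H^4(F)$ of the locus of lines meeting a fixed general line of $Y$; the claim is that this equals $\tfrac{1}{3}(g^2-c)$, which is exactly what makes the $g_1^2$ and $c_1$ terms of \eqref{eqLdefFano} cancel $[I]^{(4,0)}$, and symmetrically for $(0,4)$. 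The substantial input is the $(2,2)$-component, where one must show $[I]^{(2,2)}=\tfrac{1}{2}\,g\otimes g-\mathfrak{B}$. I would obtain this by relating the incidence correspondence to the composition ${}^{t}P\circ P$ of the universal line $P\subseteq F\times Y$ (two lines are incident iff they share a point of $Y$), and then using that the Abel--Jacobi map $P_\ast\colon H^4(Y)\to H^2(F)$ is an isomorphism of Hodge structures carrying the intersection form on $H^4(Y)$ to a multiple of the Beauville--Bogomolov form; the splitting of $H^4(Y)$ into the polarized line and its primitive complement produces precisely the $g\otimes g$ term and the $-\mathfrak{B}$ term. The normalising constants $\tfrac{1}{3}$ and $\tfrac{3}{2}$ are then pinned down by the Fujiki relation $\int_F\alpha^4=3\,q(\alpha)^2$ together with $q(g)=6$ and $\int_F g^4=108$. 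I expect this identification, and especially fixing the exact scaling of $\mathfrak{B}$, to be the main obstacle; everything else is a finite tautological or Chern class computation.

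Granting $[L]=\mathfrak{B}$, the identity $l=\Delta^\ast L$ follows from the criterion recorded after \eqref{eqLdef}, namely that it holds precisely when $l$ is a rational multiple of $c_2(T_F)$. Concretely I would pull \eqref{eqLdefFano} back along the diagonal: since $\Delta^\ast g_i^2=\Delta^\ast(g_1g_2)=g^2$ and $\Delta^\ast c_i=c$, the computation reduces to the self-intersection class $\Delta^\ast I$ of the incidence correspondence, which one compares with $\tfrac{5}{6}c_2(T_F)$. The identity $c_2(T_F)=5g^2-8c$ is a standard Chern class computation: realising $F$ as the zero locus of the section of $\mathrm{Sym}^3\mathcal{E}^\vee$ cut out by the cubic, one uses the normal bundle sequence $0\to T_F\to T_{{\rm Gr}(2,6)}|_F\to \mathrm{Sym}^3\mathcal{E}^\vee|_F\to 0$ together with $T_{{\rm Gr}(2,6)}=\mathcal{E}^\vee\otimes Q$, and extracts $c_2$ in terms of $g=c_1(\mathcal{E}^\vee)$ and $c=c_2(\mathcal{E}^\vee)$.

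The generation statements are then formal. By definition $R^\ast(F\times F)$ contains $I,\Delta,c_i,g_i$, hence it contains $L$ by \eqref{eqLdefFano}, and it contains $l_1,l_2$ because $l=\tfrac{5}{6}(5g^2-8c)$ is tautological. For the reduced generating set I would invert these relations: \eqref{eqLdefFano} expresses $I$ through $L,g_i,c_i$; the identity $l=\tfrac{5}{6}(5g^2-8c)$ solves $c_i=\tfrac{5}{8}g_i^2-\tfrac{3}{20}l_i$ in terms of $g_i,l_i$; and the quadratic equation \eqref{eqquad} for $L$ expresses the diagonal $\Delta$ in terms of $L,l_i,g_i$. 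Combining these shows that $L,l_1,l_2,g_1,g_2$ already generate $R^\ast(F\times F)$.
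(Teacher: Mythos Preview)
The paper does not prove this theorem; it is quoted from \cite[p.~81]{SV} without argument, so there is no in-paper proof to compare against. Your outline is essentially the route taken in \cite{SV}: identify the K\"unneth pieces of $[I]$ using the Abel--Jacobi isomorphism of Beauville--Donagi for the $(2,2)$-part and Schubert calculus for the $(4,0)$- and $(0,4)$-parts, then get $c_2(T_F)=5g^2-8c$ from the normal bundle sequence of $F\subseteq\mathrm{Gr}(2,6)$, and finally invert the defining relations to obtain the alternative generating set.

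Two small points deserve tightening. In your step~(2) the appeal to the criterion after \eqref{eqLdef} is misplaced: that criterion compares Markman's $l$ from \eqref{eqldef} with $\Delta^\ast L$, whereas in the present statement $l$ is \emph{defined} as $\Delta^\ast L$, so what you must actually compute is $\Delta^\ast I\in\mathrm{CH}^2(F)$ directly (the answer is $6c-3g^2$, which then gives $\Delta^\ast L=\tfrac{7}{6}g^2-\tfrac{2}{3}c-(6c-3g^2)=\tfrac{25}{6}g^2-\tfrac{20}{3}c=\tfrac{5}{6}c_2(T_F)$). And the equality $I={}^tP\circ P$ of cycle classes needs the observation that over $\Delta_F$ the fibre of the correspondence is a $\mathbb{P}^1$ of strictly smaller dimension than the dominant component, so it does not contribute to the pushforward; this is the Fano analogue of \cite[Lemma~11.2]{SV}.
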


\begin{rmk}
	Let $M$ be Markman's sheaf on $F(Y) \times F(Y)$ as in Theorem~\ref{MarkmanSheaf}. By \cite[Corollary~1.6]{Markman3} and \cite{Markman2}, the Azumaya algebra $\mathcal{E}nd(M)$ is universally defined over the moduli space of Fano varieties. Hence, by \cite[Theorem~1.10]{FLSV}, the explicit lift of $\mathfrak{B}$ from \eqref{eqLdefFano}, which clearly is universally defined as well, agrees with Markman's lift \eqref{eqLdef}.
\end{rmk}

The following result from \cite{FLSV} is crucial for the proof of Theorem~\ref{LieChow}.

\begin{thm}[{\cite[Proposition~6.4]{FLSV}}] \label{inj}
Let $F = F(Y)$ be the Fano variety of lines of a smooth cubic fourfold $Y$. Then the restriction of the cycle class map to
\begin{equation*}
{\rm cl}: R^\bullet(F \times F) \rightarrow H^{2 \bullet}(F \times F)
\end{equation*}
is injective.
\end{thm}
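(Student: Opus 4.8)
The plan is to prove the injectivity by a graded dimension count. For each codimension $i$ I would exhibit enough relations in the Chow ring to bound $\dim_{\mathbb{Q}} R^i(F \times F)$ from above, and then produce that same number of linearly independent classes in the image ${\rm cl}(R^i(F \times F)) \subseteq H^{2i}(F \times F)$. Since $\dim_{\mathbb{Q}} {\rm cl}(R^i(F\times F)) \le \dim_{\mathbb{Q}} R^i(F \times F)$ holds automatically, matching the two bounds in every degree forces ${\rm cl}$ to be injective on all of $R^\bullet(F \times F)$.

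First I would fix a convenient set of generators. By Theorem~\ref{Ldef} the classes $c_1, c_2$ can be traded for $l_1, l_2, g_1, g_2$ through $l = \tfrac{5}{6} c_2(T_F)$ and $c_2(T_F) = 5 g^2 - 8 c$, and the incidence correspondence $I$ can be traded for $L$; together with the quadratic relation~\eqref{eqquad}, which expresses the diagonal $\Delta$ in terms of $L^2$ and pulled-back classes, this shows that $R^\ast(F \times F)$ is generated by $L, l_1, l_2, g_1, g_2$. As $L \in {\rm CH}^2(F \times F)$ and $\dim(F \times F) = 8$, every element is then a sum of terms $L^k \cdot m$ with $0 \le k \le 4$ and $m$ a monomial in the pulled-back classes $l_1, l_2, g_1, g_2$.

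Second, I would assemble the relations that reduce this spanning set to a basis. On a single factor the tautological ring $R^\ast(F)$, generated by $g$ and $l$, is known to inject into $H^\ast(F)$ with explicit Betti numbers \cite{SV}; by the Künneth formula this already controls the subring $A := R^\ast(F)_1 \cdot R^\ast(F)_2$ of classes pulled back from the two factors, which injects into cohomology. It then remains to understand the interaction of $L$ with $A$, that is, to express the products $L \cdot l_i$, $L \cdot g_i$ and the powers $L^2, L^3, L^4$ in terms of the spanning monomials $L^k \cdot m$. Reading off the dimension of each graded piece of the resulting normal form yields the desired upper bound for $\dim_{\mathbb{Q}} R^i(F \times F)$.

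Finally, for the matching lower bound I would compute the cohomological image. Because $F$ is deformation equivalent to $S^{[2]}$, the ring $H^\ast(F)$ is completely explicit, and via Künneth together with the identification ${\rm cl}(L) = \mathfrak{B}$ as the Beauville--Bogomolov correspondence, so is the relevant part of $H^\ast(F \times F)$; verifying that the images of the normal-form classes are linearly independent supplies the lower bound. The hard part will be the completeness of the relations in the second step, and specifically the products $L \cdot g_i$: whereas the incidence correspondence and the Chern classes $l_i$ are of the same nature as the classes available on $S^{[2]}$, the Plücker class $g$ is genuinely Fano-theoretic, so the relations expressing $L \cdot g_i$ (equivalently the products of $g_i$ with $I$) must be extracted directly from the geometry of lines on $Y$. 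The content of the theorem is precisely that no cohomological relation among these classes fails to lift to the Chow ring, so that the upper and lower bounds coincide.
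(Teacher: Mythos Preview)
The paper does not prove this statement: Theorem~\ref{inj} is quoted verbatim from \cite[Proposition~6.4]{FLSV} and used as a black box, so there is no ``paper's own proof'' to compare your proposal against.

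That said, your outline is a plausible \emph{direct} strategy, but it is not the route taken in \cite{FLSV}. There the authors work universally over the moduli space of smooth cubic fourfolds and establish a Franchetta-type property: tautological classes are \emph{generically defined}, and one shows that a generically defined class which is fiberwise homologically trivial is already fiberwise rationally trivial. This spreading argument avoids the explicit enumeration of relations that your second step would require; in particular it sidesteps exactly the ``hard part'' you flag, namely finding all Chow-level relations involving $L \cdot g_i$ (equivalently $I \cdot g_i$). Your approach would in principle work, but carrying it out means producing a complete presentation of $R^\ast(F \times F)$ by generators and relations in the Chow ring, which is substantially more than what you have sketched---you have only asserted that such relations can be ``assembled'' without indicating where they come from. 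The Franchetta approach of \cite{FLSV} trades this explicit algebra for a deformation argument, which is what makes the result accessible.
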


In fact, Shen and Vial in \cite{SV} prove the following relations for the explicit lifts $L$ of Theorems~\ref{LdefHilb} and~\ref{Ldef}. The first is referred to as the \emph{quadratic equation for~$L$}:
\begin{equation}\label{eqquad}
L^2 = 2 \Delta - \frac{2}{r+2}L(l_1 + l_2) - \frac{1}{r(r+2)}(2l_1^2 - rl_1 l_2 + 2l_2^2).
\end{equation}
Additionally, we consider the following three relations for all $\sigma \in {\rm CH}^4(X)$ and all $\tau \in {\rm CH}^2(X)$:
\begin{align}
L_\ast(l^2) &= 0, \label{relationL(l^2)=0} \\
L_\ast(l \cdot L_\ast(\sigma)) &= (r+2) L_\ast(\sigma), \label{relation25} \\
(L^2)_\ast(l \cdot (L^2)_\ast(\tau)) &= 0. \label{relationL^2}
\end{align}

These are the relations which Shen and Vial in~\cite[Theorem~2]{SV} established to be the core relations necessary to obtain a Fourier decomposition. The lifts from Theorems~\ref{LdefHilb} and~\ref{Ldef} indeed satisfy all of these, see Theorem~1 and the paragraphs after Theorem~2 of \mbox{\emph{loc. cit.}} In the case of the Fano variety of lines they all follow more directly from cohomology, Theorem~\ref{inj} and the fact that the tautological subring $R^\ast(F \times F)$ is closed under the composition of correspondences. The latter is a consequence of \cite[Proposition~6.3]{FLSV}.

\section{Proofs of the main results} \label{Fano}

\subsection{Hyperkähler fourfolds.}
Let $X$ be a hyperkähler variety of complex dimension~$4$ without odd cohomology over~$\mathbb{Q}$ or, equivalently, $H^3(X) = 0$. Then the cup product is commutative and we always write $\alpha \beta$ instead of $\alpha \cup \beta$. By $(e_i)$ we denote an arbitrary orthonormal basis of $H^2(X, \mathbb{C})$ with respect to the Beauville--Bogomolov form and we denote the second Betti number by $r := b_2(X)$. The (modified) Fujiki constant $c_X$ equals $1$ for hyperkähler varieties of $K3^{[n]}$-type. Finally, we write $\mathbf{1} \in H^8(X)$ for the generator of $H^8(X)$ with integral $1$.

\begin{prop}\label{f_a}
Let $a \in H^2(X, \mathbb{Q})$ and define $\widetilde{f}_a: H^\bullet(X) \rightarrow H^{\bullet -2}(X)$ by
\begin{equation*}
\widetilde{f}_a(\beta) := \begin{cases}  0 & \beta \in H^0(X), \\ 4(a, \beta) [X] & \beta \in H^2(X), \\ \frac{2}{c_X} \mathfrak{B}_\ast(a \beta) & \beta \in H^4(X), \\ \frac{2}{c_X} \mathfrak{B}_\ast(\beta) & \beta \in H^6(X), \\ \frac{4}{(r+2)c_X} \left( \int_X \beta \right) \mathfrak{b} a & \beta \in H^8(X). \end{cases}
\end{equation*}
Then $\widetilde{f}_a$ satisfies $[e_a, \widetilde{f}_a] = (a,a)h$ and $[h, \widetilde{f}_a] = -2 \widetilde{f}_a$. Moreover, $[h, e_a] = 2e_a$.
\end{prop}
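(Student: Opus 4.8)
The plan is to treat the three brackets separately: two of them are formal consequences of the grading, and only $[e_a, \widetilde{f}_a] = (a,a)h$ requires a computation. First I would check that $e_a$ and $\widetilde{f}_a$ are homogeneous operators for the cohomological grading, of degrees $+2$ and $-2$. For $e_a$ this is clear; for $\widetilde{f}_a$ it is a branch-by-branch inspection of the five cases, using that $\mathfrak{B}_\ast$ lowers cohomological degree by $4$ (as $\mathfrak{B}$ is a codimension-$2$ correspondence on the fourfold $X$) together with the evident degrees of the remaining factors. Granting homogeneity, both $[h, e_a] = 2e_a$ and $[h, \widetilde{f}_a] = -2\widetilde{f}_a$ are immediate: if $\phi$ is homogeneous of degree $d$ and $h$ acts by $j-4$ on $H^j(X)$, then $[h, \phi] = d\,\phi$, and here $d = +2$ for $e_a$ and $d = -2$ for $\widetilde{f}_a$.

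The content lies in $[e_a, \widetilde{f}_a] = (a,a)h$, which I would verify on each summand $H^{2i}(X)$. The only external input is the Fujiki relation $\int_X x_1 x_2 x_3 x_4 = c_X\big[(x_1,x_2)(x_3,x_4) + (x_1,x_3)(x_2,x_4) + (x_1,x_4)(x_2,x_3)\big]$ for $x_1,\dots,x_4 \in H^2(X)$, together with the reproducing identity $\sum_i (x, e_i)e_i = x$ for the orthonormal basis. On $H^0(X)$ the relation is immediate from the definitions. On $H^8(X)$ it reduces, after applying $e_a$, to the top-degree identity $a^2\mathfrak{b} = (r+2)c_X(a,a)\mathbf{1}$, obtained by integrating and using Fujiki with $\sum_i (a,e_i)^2 = (a,a)$. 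On $H^2(X)$ I would substitute the definition, expand $\mathfrak{B}_\ast(a^2\beta) = \sum_i e_i \int_X e_i a^2\beta$ in the basis, and apply Fujiki to the four degree-$2$ classes $e_i, a, a, \beta$; the reproducing identity collapses the sum and the two contributions combine to $-2(a,a)\beta$, as required. The case $H^4(X)$ is purely formal: there $e_a\widetilde{f}_a$ and $\widetilde{f}_a e_a$ coincide term by term, matching $h = 0$.

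The delicate case is $H^6(X)$, and this is where I expect the main obstacle. As $a$ is not assumed Lefschetz, a general $\beta \in H^6(X)$ need not be of the form $a^2 b'$, so direct manipulation of $\mathfrak{B}_\ast(\beta)$ is blocked. The device I would use is to pair the asserted identity against an arbitrary $e_j \in H^2(X)$ via the perfect Poincaré pairing $H^6(X) \times H^2(X) \to \mathbb{Q}$, which turns every term into an integral of degree-$2$ classes amenable to Fujiki. Here $\int_X a\beta = (a, \mathfrak{B}_\ast\beta)$ is immediate from the definition of $\mathfrak{B}_\ast$, while $\int_X e_j a^2 \mathfrak{B}_\ast\beta$ and $\int_X e_j a\mathfrak{b} = (r+2)c_X(e_j, a)$ are Fujiki computations; after substitution all contributions cancel except $2(a,a)(e_j, \mathfrak{B}_\ast\beta) = \int_X e_j \cdot 2(a,a)\beta$, and since $e_j$ runs over a basis this gives the $H^6$-identity. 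The difficulty is thus organizational rather than deep: the essential choices are to reduce the middle-degree identities to Fujiki through Poincaré duality rather than evaluating $\mathfrak{B}_\ast$ on non-decomposable classes, and to keep track of the normalizing constants $\tfrac{2}{c_X}$ and $\tfrac{4}{(r+2)c_X}$, which are precisely those making the Fujiki cancellations yield the correct eigenvalue $2i-4$ of $h$.
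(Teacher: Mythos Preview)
Your proposal is correct and follows essentially the same route as the paper: both verify $[e_a,\widetilde{f}_a]=(a,a)h$ degree by degree using the Fujiki relation, handle $H^4$ formally, and dispose of the two remaining brackets by homogeneity. The only difference is in the $H^6$ step: the paper first invokes Lemma~\ref{BB} to write $\beta=\frac{1}{r+2}\mathfrak{b}\widehat{\beta}$ with $\widehat{\beta}\in H^2(X)$ before pairing against an arbitrary $\gamma\in H^2(X)$, whereas you pair directly and use the identity $\int_X a\beta=(a,\mathfrak{B}_\ast\beta)$, which amounts to the same thing since $\widehat{\beta}=c_X\,\mathfrak{B}_\ast(\beta)$.
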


If $(a,a) \neq 0$, we set $f_a := \frac{1}{(a,a)} \widetilde{f}_a$ to obtain the usual $\mathfrak{sl}_2(\mathbb{Q})$-commutation relations. By Lemma~\ref{e_a^2} below, $(a,a) \neq 0$ is equivalent to $a$ being Lefschetz in the sense of Subsection~\ref{NeronSeveri}.

\begin{rmk}
If $(a,a) \neq 0$, then $\widetilde{f}_a$ is uniquely determined by the commutation relations while this fails if $(a,a) = 0$ in which case the zero map also satisfies them. It should be emphasized also that $\widetilde{f}_a$ is linear in $a \in H^2(X)$ because for one, $f_a$ is not, and neither is it clear from the abstract description of $f_a$ that it would suffice to multiply it by some quadratic form $(a,a)$ in order to make it linear in $a$.
\end{rmk}

Before proving Proposition~\ref{f_a}, we need some lemmas. They are simple computations in cohomology using the definition of the Beauville--Bogomolov form, so we omit the proofs.

\begin{lem}\label{bb}
For arbitrary $\gamma, \gamma' \in H^2(X)$ we have $\int_X \mathfrak{b} \gamma \gamma' = c_X (r+2) (\gamma, \gamma')$.
\end{lem}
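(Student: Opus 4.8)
The plan is to deduce the identity from the Fujiki relation together with the expansion $\mathfrak{b} = \sum_{i=1}^r e_i^2$ in an orthonormal basis $(e_i)$ of $H^2(X,\mathbb{C})$. Recall that for a hyperkähler fourfold the top intersection form on $H^2(X)$ is governed by the Beauville--Bogomolov form; in its fully polarized shape the (modified) Fujiki relation reads
\[
\int_X \alpha_1\alpha_2\alpha_3\alpha_4 = c_X\big[(\alpha_1,\alpha_2)(\alpha_3,\alpha_4) + (\alpha_1,\alpha_3)(\alpha_2,\alpha_4) + (\alpha_1,\alpha_4)(\alpha_2,\alpha_3)\big]
\]
for all $\alpha_1,\dots,\alpha_4 \in H^2(X)$, the sum running over the three ways of pairing up the four factors. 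This polarized form is equivalent to the unpolarized statement $\int_X \alpha^4 = 3\,c_X\,(\alpha,\alpha)^2$: setting all $\alpha_i = \alpha$ recovers it, and conversely the quartic form $\alpha \mapsto \int_X \alpha^4$, once known to be proportional to $(\alpha,\alpha)^2$, has a unique symmetric $4$-linear polarization, namely the right-hand side above.

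First I would reduce to a term-by-term computation. Since the cup product is commutative in the absence of odd cohomology, substituting $\mathfrak{b} = \sum_{i=1}^r e_i^2$ gives
\[
\int_X \mathfrak{b}\,\gamma\,\gamma' = \sum_{i=1}^r \int_X e_i\, e_i\, \gamma\, \gamma'.
\]
Applying the polarized Fujiki relation with $\alpha_1 = \alpha_2 = e_i$, $\alpha_3 = \gamma$, $\alpha_4 = \gamma'$ and using $(e_i,e_i) = 1$, each summand equals
\[
c_X\big[(\gamma,\gamma') + 2\,(e_i,\gamma)(e_i,\gamma')\big].
\]

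The final step is to sum over $i$. The first contribution gives $c_X\, r\,(\gamma,\gamma')$, since there are $r$ basis vectors. For the cross term I would invoke the completeness (Parseval) relation for the orthonormal basis: expanding $\gamma = \sum_i (e_i,\gamma)\,e_i$ and pairing with $\gamma'$ yields $\sum_{i=1}^r (e_i,\gamma)(e_i,\gamma') = (\gamma,\gamma')$. Hence the cross term contributes $2\,c_X\,(\gamma,\gamma')$, and adding the two gives $\int_X \mathfrak{b}\,\gamma\,\gamma' = c_X(r+2)(\gamma,\gamma')$, as claimed.

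There is no genuine obstacle here; as the text remarks, the computation is entirely formal once the polarized Fujiki relation is in hand. The only two points demanding a word of care are the bookkeeping of the normalizing constant and the descent from $\mathbb{C}$ to $\mathbb{Q}$. For the former, one must match the classical Fujiki constant $\tfrac{(2n)!}{2^n n!} = 3$ for $K3^{[2]}$-type to the modified constant $c_X = 1$ employed here, so that the three pairings appear with coefficient $c_X$ rather than $3c_X$; getting this wrong would shift the answer by a factor of $3$. For the latter, the argument is carried out over $\mathbb{C}$ using the complex orthonormal basis, but the resulting $\mathbb{C}$-bilinear identity restricts to rational $\gamma,\gamma'$ because both sides are defined over $\mathbb{Q}$ and depend $\mathbb{Q}$-bilinearly on $(\gamma,\gamma')$.
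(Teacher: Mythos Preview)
Your proof is correct and is exactly the kind of direct computation the paper has in mind; the paper in fact omits the proof entirely, remarking only that it is a simple computation using the Beauville--Bogomolov form. Your use of the polarized Fujiki relation together with the orthonormal-basis expansion $\mathfrak{b} = \sum_i e_i^2$ and the Parseval identity is the standard route and matches the paper's intent.
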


In the same vein and using that $(-,-)$ is non-degenerate, one proves:

\begin{lem}\label{BB}
The linear map $H^2(X) \rightarrow H^6(X)$ given by the cup product with $\frac{1}{(r+2)c_X} \mathfrak{b}$ is an isomorphism with inverse $\mathfrak{B}_\ast$.
\end{lem}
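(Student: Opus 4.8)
The plan is to compute the correspondence action $\mathfrak{B}_\ast$ explicitly in the chosen orthonormal basis and then verify directly that it inverts cup product with $\frac{1}{(r+2)c_X}\mathfrak{b}$ on one side; a dimension count upgrades this to a genuine two-sided inverse. Since both maps are defined over $\mathbb{Q}$, it is harmless to extend scalars to $\mathbb{C}$ and to argue with the orthonormal basis $(e_i)$, as non-degeneracy of $(-,-)$ guarantees such a basis exists.

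First I would unwind the definition of the correspondence action $\mathfrak{B}_\ast = (p_2)_\ast\big(p_1^\ast(-)\cdot \mathfrak{B}\big)$. Writing $\mathfrak{B} = \sum_{i=1}^r e_i\otimes e_i$ and rearranging, one has $\mathfrak{B}\cdot p_1^\ast\beta = \sum_i p_1^\ast(e_i\beta)\,p_2^\ast(e_i)$; applying $(p_2)_\ast$ and the projection formula, together with the fact that $(p_2)_\ast p_1^\ast$ is integration over the first factor, yields for $\beta \in H^6(X)$
\begin{equation*}
\mathfrak{B}_\ast(\beta) = \sum_{i=1}^r \Big(\int_X e_i\beta\Big)\, e_i \in H^2(X),
\end{equation*}
the remaining Künneth contributions vanishing for degree reasons. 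In particular $\mathfrak{B}_\ast$ lowers cohomological degree by $4$ and maps $H^6(X) \to H^2(X)$, matching the target of cup product with $\mathfrak{b}$.

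The key step is then to evaluate the single composite $\mathfrak{B}_\ast \circ \big(\tfrac{1}{(r+2)c_X}\mathfrak{b}\,\cup\,-\big)$ on $H^2(X)$. For $\gamma \in H^2(X)$ the element $\tfrac{1}{(r+2)c_X}\mathfrak{b}\gamma$ lies in $H^6(X)$, and feeding it into the formula above gives $\tfrac{1}{(r+2)c_X}\sum_i \big(\int_X e_i\mathfrak{b}\gamma\big)\, e_i$. Applying Lemma~\ref{bb} to the pair $(e_i,\gamma)$ turns each coefficient into $c_X(r+2)(e_i,\gamma)$, so the prefactor cancels and one is left with $\sum_i (e_i,\gamma)\, e_i = \gamma$, the last equality being the orthonormal expansion of $\gamma$. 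Hence this composite is the identity on $H^2(X)$.

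Finally, since $H^2(X)$ and $H^6(X)$ have equal dimension $r$ by Poincaré duality, a one-sided inverse between them is automatically a two-sided inverse; thus cup product with $\frac{1}{(r+2)c_X}\mathfrak{b}$ is an isomorphism $H^2(X) \to H^6(X)$ with inverse $\mathfrak{B}_\ast$. I do not expect any serious obstacle: once the explicit action of $\mathfrak{B}_\ast$ is written down, the computation is routine, the only points requiring care being the correct normalization constant supplied by Lemma~\ref{bb} and the use of non-degeneracy of $(-,-)$, which underlies both the existence of the orthonormal basis and the validity of the expansion $\gamma = \sum_i (e_i,\gamma)\, e_i$.
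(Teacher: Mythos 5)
Your proof is correct, and it is exactly the kind of argument the paper has in mind: the paper omits the proof of Lemma~\ref{BB}, declaring it a simple computation "in the same vein" as Lemma~\ref{bb} using non-degeneracy of $(-,-)$, which is precisely your computation of $\mathfrak{B}_\ast(\tfrac{1}{(r+2)c_X}\mathfrak{b}\gamma)=\sum_i(e_i,\gamma)e_i=\gamma$ in an orthonormal basis. The only addition you make is the explicit Poincaré-duality dimension count to upgrade the one-sided identity to a two-sided inverse, which is a standard and correct finishing step.
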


\begin{lem}\label{e_a^2}
Let $a \in H^2(X, \mathbb{Q})$. Then the following are equivalent:
\begin{enumerate}
\item $(a,a) \neq 0$,
\item $e_a^2: H^2(X) \rightarrow H^6(X)$ is an isomorphism,
\item $e_a^4: H^0(X) \rightarrow H^8(X)$ is an isomorphism.
\end{enumerate}
In particular, $a$ is Lefschetz if and only if $(a,a) \neq 0$.
\end{lem}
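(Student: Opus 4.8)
The plan is to reduce each of the two nontrivial isomorphism statements to the nondegeneracy of an explicit symmetric bilinear form on $H^2(X)$, computed by means of the Fujiki relation. Since $H^0(X)$ and $H^8(X)$ are one-dimensional and $H^2(X), H^6(X)$ both have dimension $r$, in each case the relevant map $e_a^k$ is an isomorphism as soon as it is injective, and by Poincaré duality injectivity is equivalent to nondegeneracy of the associated pairing. The only structural input required is the Fujiki relation for hyperkähler fourfolds, $\int_X \alpha^4 = 3 c_X (\alpha,\alpha)^2$ with $c_X = 1$; polarizing this quartic form yields the symmetric four-linear identity $\int_X \alpha_1\alpha_2\alpha_3\alpha_4 = (\alpha_1,\alpha_2)(\alpha_3,\alpha_4) + (\alpha_1,\alpha_3)(\alpha_2,\alpha_4) + (\alpha_1,\alpha_4)(\alpha_2,\alpha_3)$, which is exactly the kind of computation already used implicitly for Lemmas~\ref{bb} and~\ref{BB}.

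For the equivalence of $(1)$ and $(3)$, observe that $e_a^4$ sends the generator of $H^0(X)$ to $a^4 \in H^8(X)$, so it is an isomorphism iff $\int_X a^4 \neq 0$. By the diagonal Fujiki relation $\int_X a^4 = 3(a,a)^2$, and since $3 \neq 0$ this vanishes precisely when $(a,a) = 0$, giving the claim.

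For the equivalence of $(1)$ and $(2)$, I introduce the symmetric bilinear form $B(\beta,\gamma) := \int_X a^2\beta\gamma$ on $H^2(X)$. Composing $e_a^2$ with the perfect Poincaré pairing $H^6(X) \cong H^2(X)^\vee$ identifies $e_a^2$ with $B$, so $e_a^2$ is an isomorphism iff $B$ is nondegenerate. The polarized Fujiki relation gives $B(\beta,\gamma) = (a,a)(\beta,\gamma) + 2(a,\beta)(a,\gamma)$, exhibiting $B$ as the Beauville--Bogomolov form scaled by $(a,a)$ plus a rank-one correction. If $(a,a) = 0$ then $B(\beta,\gamma) = 2(a,\beta)(a,\gamma)$ has rank at most $1 < r$ and is degenerate, so $\lnot(1) \Rightarrow \lnot(2)$. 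If $(a,a) \neq 0$, I argue that $B$ is nondegenerate: for $\beta$ in the radical the functional identity $(a,a)(\beta,-) = -2(a,\beta)(a,-)$ gives, by nondegeneracy of $(-,-)$, the vector identity $(a,a)\beta = -2(a,\beta)a$; pairing with $a$ forces $3(a,a)(a,\beta) = 0$, hence $(a,\beta) = 0$, and then $(a,a)\beta = 0$ yields $\beta = 0$.

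Finally, the concluding assertion follows by unwinding the definition of Lefschetz with $n = 4$: the conditions for $k = 1, 3$ concern the maps $H^3(X) \to H^5(X)$ and $H^1(X) \to H^7(X)$, which are automatically isomorphisms since $X$ has no odd cohomology, while the conditions for $k = 2, 4$ are exactly $(2)$ and $(3)$. The equivalences just established then show $a$ is Lefschetz iff $(a,a) \neq 0$. I expect the only genuinely delicate point to be the nondegeneracy of $B$ when $(a,a) \neq 0$: because $B$ is a rank-one perturbation of a rescaled nondegenerate form rather than a mere rescaling, one must verify that this perturbation introduces no kernel, which is precisely what the pairing-with-$a$ step accomplishes.
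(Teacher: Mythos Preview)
Your proof is correct and follows essentially the same approach as the paper: both use the Fujiki relation $\int_X a^4 = 3(a,a)^2$ for $(1)\Leftrightarrow(3)$ and the polarized identity $\int_X a^2\beta\gamma = (a,a)(\beta,\gamma) + 2(a,\beta)(a,\gamma)$ together with the pairing-with-$a$ trick for $(1)\Leftrightarrow(2)$. The only minor difference is in the direction $(2)\Rightarrow(1)$: the paper argues that $e_a^2$ an isomorphism forces $a^3\neq 0$, whence some $\int_X a^3\beta = 3(a,a)(a,\beta)$ is nonzero, while you take the contrapositive and observe that $(a,a)=0$ makes $B$ a rank-one form, which is degenerate since $r>1$; both are equally valid and the underlying computation is the same.
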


\begin{proof}
The equation $\int_X a^4 = 3 c_X (a,a)^2$ proves that $(a,a) \neq 0$ if and only if $e_a^4$ is an isomorphism. Next, for $(a,a) \neq 0$ let $\beta \in H^2(X)$ be in the kernel of $e_a^2$, i.e., $a^2 \beta = 0$. It suffices to show $\beta = 0$ by Poincaré duality. Now, if $a^2 \beta = 0$, then $a^2 \beta \gamma = 0$ for every $\gamma \in H^2(X)$. Setting $\gamma = a$ gives $0 = \int_X a^3 \beta = 3 c_X (a,a)(a, \beta)$, hence $(a,\beta) = 0$. Then for arbitrary $\gamma$ we get $0 = \int_X a^2 \beta \gamma = c_X (a,a) (\beta, \gamma) + 2 c_X (a, \beta) (a, \gamma) = c_X (a,a) (\beta, \gamma)$, so $\beta = 0$ as $(-,-)$ is non-degenerate. Conversely, let $e_a^2$ be an isomorphism. Then certainly $a \neq 0$, and so $0 \neq e_a^2(a) = a^3$. By Poincaré duality, there is some $\beta \in H^2(X)$ with $a^3 \beta \neq 0$, so $0 \neq \int_X a^3 \beta = 3 c_X (a,a)(a,\beta)$, in particular $(a,a) \neq 0$.
\end{proof}

\begin{proof}[Proof of Proposition~\ref{f_a}]
We show $[e_a, \widetilde{f}_a] = (a,a)h$ case by case. For $\beta \in H^0(X)$ assume by linearity $\beta = [X]$, in which case indeed
\begin{equation*}
[e_a, \widetilde{f}_a]([X]) = 0 - \widetilde{f}_a(a) = -4(a,a)[X] = (a,a)h([X]).
\end{equation*}
For $\beta \in H^2(X)$, we first compute
\begin{align*}
\mathfrak{B}_\ast(a^2 \beta) &= (p_2)_\ast \left( \sum_{i=1}^{r} (a^2 \beta e_i) \otimes e_i \right) \\ &= \sum_{i=1}^{r} \left(\int_X a^2 \beta e_i \right) e_i \\ &= c_X \sum_{i=1}^{r} \big((a,a)(\beta,e_i) + 2(a, \beta)(a,e_i)\big) e_i \\ &= c_X (a,a)\beta + 2 c_X (a,\beta)a.
\end{align*}
Hence,
\begin{equation*}
[e_a, \widetilde{f}_a](\beta) = a \widetilde{f}_a(\beta) - \widetilde{f}_a(a \beta) = 4(a,\beta)a - \frac{2}{c_X} \mathfrak{B}_\ast(a^2 \beta) = -2(a,a) \beta = (a,a)h(\beta),
\end{equation*}
as desired. For $\beta \in H^4(X)$, we have $[e_a, \widetilde{f}_a](\beta) = \frac{2}{c_X} a \mathfrak{B}_\ast(a \beta) - \frac{2}{c_X} a \mathfrak{B}_\ast(a \beta) = 0$. In the case $\beta \in H^6(X)$, by Lemma~\ref{BB}, we can write $\beta = \frac{1}{r+2} \mathfrak{b} \widehat{\beta}$ for a unique $\widehat{\beta} \in H^2(X)$. As $h(\beta) = 2 \beta$ we need to show $[e_a, \widetilde{f}_a](\beta) -2(a,a)\beta = 0$, which, by Poincaré duality, is equivalent to $\int_X \left( [e_a, \widetilde{f}_a](\beta) -2(a,a)\beta \right)\gamma = 0$ for all $\gamma \in H^2(X)$. Indeed,
\begin{align*}
\int_X \left( [e_a, \widetilde{f}_a](\beta) -2(a,a)\beta \right)\gamma &= \int_X \left( 2a^2 \widehat{\beta} \gamma \right) \\ &- \int_X \left(\frac{4}{(r+2)^2} \left( \int_X \mathfrak{b} \widehat{\beta} a \right) \mathfrak{b}a \gamma \right) - \int_X \left( \frac{2(a,a)}{r+2} \mathfrak{b} \widehat{\beta} \gamma \right) \\ &= c_X \left(2(a,a)(\widehat{\beta},\gamma) + 4(a, \widehat{\beta})(a,\gamma)\right) \\ &- 4 c_X (a,\widehat{\beta})(a,\gamma) - 2 c_X (a,a)(\widehat{\beta},\gamma) \\ &= 0.
\end{align*}
Finally, let $\beta \in H^8(X)$. Here,
\begin{equation*}
[e_a, \widetilde{f}_a](\beta) = a \widetilde{f}_a(\beta) - 0 = \frac{4}{(r+2)c_X} \left( \int_X \beta \right) \mathfrak{b} a^2 = 4(a,a) \beta = (a,a) h(\beta),
\end{equation*}
using Lemma~\ref{bb} and the fact that $H^8(X)$ is of dimension $1$. The relations $[h, \widetilde{f}_a] = -2 \widetilde{f}_a$ and $[h, e_a] = 2e_a$ follow directly from the fact that $\widetilde{f}_a$ decreases and $e_a$ increases the degree by $2$.
\end{proof}

\begin{cor}\label{T_a}
Let $X$ be a hyperkähler variety of complex dimension~$4$ with $H^3(X, \mathbb{Q}) = 0$. Let $\mathfrak{B}$ admit a lift $L \in {\rm CH}^2(X \times X)$ and $\mathfrak{b}$ a lift $l \in {\rm CH}^2(X)$. Then the cycle class
\begin{equation*}
\widetilde{F}_a := \frac{4}{(r+2)c_X}(l_1 a_1 + l_2 a_2) + \frac{2}{c_X} L (a_1 + a_2) \in {\rm CH}^3(X \times X)
\end{equation*}
is a lift of $\widetilde{f}_a$. If $(a,a) \neq 0$, we again set $F_a := \frac{1}{(a,a)} \widetilde{F}_a$.
\end{cor}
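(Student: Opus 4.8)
The statement that $\widetilde{F}_a$ \emph{lifts} $\widetilde{f}_a$ means that the correspondence $\widetilde{F}_a \in {\rm CH}^3(X \times X)$ acts on cohomology, through its cycle class, as the operator $\widetilde{f}_a$ of Proposition~\ref{f_a}; that is, ${\rm cl}(\widetilde{F}_a)_\ast = \widetilde{f}_a$ in ${\rm End}_{\mathbb{Q}}(H^\ast(X))$. The plan is therefore to pass to cohomology and compare the two operators degree by degree. Since the cycle class map is a ring homomorphism compatible with the projections $p_1, p_2$, and since $L$ and $l$ are lifts of $\mathfrak{B}$ and $\mathfrak{b}$ by hypothesis, the cohomology class of $\widetilde{F}_a$ is obtained by replacing $L$ with $\mathfrak{B}$ and $l$ with $\mathfrak{b}$, i.e.
\[
\frac{4}{(r+2)c_X}\big(\mathfrak{b}_1 a_1 + \mathfrak{b}_2 a_2\big) + \frac{2}{c_X}\,\mathfrak{B}(a_1 + a_2) \in H^6(X \times X).
\]
As a degree-$6$ class on the fourfold $X \times X$ this induces a degree $-2$ operator on $H^\ast(X)$, matching the target of $\widetilde{f}_a$, so it remains to evaluate the four summands on each $H^{2j}(X)$.

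First I would dispose of the two pullback summands $\mathfrak{b}_1 a_1 = p_1^\ast(\mathfrak{b}a)$ and $\mathfrak{b}_2 a_2 = p_2^\ast(\mathfrak{b}a)$. For a class pulled back from the first factor one has $\big(p_1^\ast \xi\big)_\ast(\alpha) = \big(\int_X \alpha\,\xi\big)[X]$, which vanishes unless $\alpha\,\xi$ is of top degree; with $\xi = \mathfrak{b}a \in H^6(X)$ only $\alpha \in H^2(X)$ survives, and Lemma~\ref{bb} turns $\int_X \alpha\,\mathfrak{b}a$ into $c_X(r+2)(a,\alpha)$, so the $\mathfrak{b}_1 a_1$ summand contributes exactly the $H^2$-value $4(a,\alpha)[X]$. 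Dually, $\big(p_2^\ast \xi\big)_\ast(\alpha) = \big(\int_X \alpha\big)\xi$ is supported on $H^8(X)$, so the $\mathfrak{b}_2 a_2$ summand contributes the $H^8$-value $\frac{4}{(r+2)c_X}\big(\int_X \alpha\big)\mathfrak{b}a$. Both match $\widetilde{f}_a$ exactly.

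For the two summands built from $\mathfrak{B}$ I would use the projection formula to obtain $\big(\mathfrak{B}\cdot p_1^\ast a\big)_\ast(\alpha) = \mathfrak{B}_\ast(a\alpha)$ and $\big(\mathfrak{B}\cdot p_2^\ast a\big)_\ast(\alpha) = a\,\mathfrak{B}_\ast(\alpha)$. Because $\mathfrak{B} = \sum_i e_i \otimes e_i$ lies in $H^2(X) \otimes H^2(X)$, the operator $\mathfrak{B}_\ast$ is concentrated in the single bidegree $H^6(X) \to H^2(X)$; hence the $La_1$ summand is supported on $H^4(X)$, where it gives $\frac{2}{c_X}\mathfrak{B}_\ast(a\alpha)$, and the $La_2$ summand is supported on $H^6(X)$, where it gives $\frac{2}{c_X}\,a\,\mathfrak{B}_\ast(\alpha)$. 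Collecting the contributions, $H^0(X)$ receives nothing, $H^2(X)$ only the $\mathfrak{b}_1 a_1$ term, $H^4(X)$ only the $La_1$ term, $H^6(X)$ only the $La_2$ term, and $H^8(X)$ only the $\mathfrak{b}_2 a_2$ term, recovering the five cases of $\widetilde{f}_a$ precisely. The final assertion is then immediate: by linearity of the cycle class map, $F_a = \frac{1}{(a,a)}\widetilde{F}_a$ lifts $f_a = \frac{1}{(a,a)}\widetilde{f}_a$ whenever $(a,a) \neq 0$.

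The computation is routine once the two correspondence-action identities above are in place, so I do not expect a serious obstacle; the one point demanding care is the asymmetry between the two $\mathfrak{B}$-summands. Although $\mathfrak{B}$ is symmetric, multiplying by $a$ on the first versus the second factor produces the two operators $\alpha \mapsto \mathfrak{B}_\ast(a\alpha)$ and $\alpha \mapsto a\,\mathfrak{B}_\ast(\alpha)$, which are supported in \emph{different} degrees ($H^4$ and $H^6$ respectively); it is exactly this that lets the single class $\widetilde{F}_a$ account for both the $H^4$- and the $H^6$-case of $\widetilde{f}_a$ at once. I would verify the $H^6$-case with particular attention, since there the surviving factor $a$ originates from the second projection rather than from $\mathfrak{B}_\ast$ itself.
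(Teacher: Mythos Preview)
Your verification is correct and is precisely the direct check the paper has in mind by stating this as an immediate corollary of Proposition~\ref{f_a} without further argument: pass to cohomology, identify the four summands $\mathfrak{b}_1 a_1$, $\mathfrak{b}_2 a_2$, $\mathfrak{B}a_1$, $\mathfrak{B}a_2$, and read off from which cohomological degree each one picks up a non-trivial contribution.

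One remark on the point you yourself flag. Your $H^6$ computation gives $\tfrac{2}{c_X}\,a\,\mathfrak{B}_\ast(\beta)$, whereas the displayed definition of $\widetilde f_a$ in Proposition~\ref{f_a} reads $\tfrac{2}{c_X}\,\mathfrak{B}_\ast(\beta)$ in that case, without the factor~$a$. This is a typo in the statement of the proposition, not an error in your argument: the paper's own proof of the $H^6$ case computes $\int_X e_a\widetilde f_a(\beta)\gamma = \int_X 2a^2\widehat\beta\gamma$, which forces $\widetilde f_a(\beta) = 2a\widehat\beta = \tfrac{2}{c_X}\,a\,\mathfrak{B}_\ast(\beta)$ (also necessary for $\widetilde f_a$ to land in $H^{\bullet-2}$). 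So your caution about the asymmetry of the two $\mathfrak{B}$-summands was well placed, and your formula is the intended one.
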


\begin{defn}
Under the assumptions of Corollary~\ref{T_a}, and for a fixed lift $L$ of $\mathfrak{B}$, we denote the commutator by
\begin{equation*}
\widetilde{H}_a := [\Delta_\ast(a), \widetilde{F}_a] = (a_2 - a_1) \widetilde{F}_a
\end{equation*}
and $H_a := \frac{1}{(a,a)} \widetilde{H}_a$ for $(a,a) \neq 0$. Automatically, $\widetilde{H}_a$ is a lift of $(a,a)h$ where $h$ is the cohomological grading operator.
\end{defn}

\begin{prop}\label{relCohom}
Let $a \in H^2(X, \mathbb{Q})$. Then, in $H^8(X \times X, \mathbb{Q})$, the following relation holds:
\begin{equation*}
(a,a) \mathfrak{B} \mathfrak{b}_1 = (r+2) \mathfrak{B} a_1^2 - 2 \mathfrak{b}_1 a_1 a_2.
\end{equation*}
\end{prop}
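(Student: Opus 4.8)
The plan is to reduce the claimed identity in $H^8(X \times X)$, which lives entirely in the Künneth summand $H^6(X) \otimes H^2(X)$, to a family of identities in $H^6(X)$ indexed by a basis of $H^2(X)$, and then to verify each of those by Poincaré duality on $X$. I would work over $\mathbb{C}$ and fix the orthonormal basis $(e_i)$ with $\mathfrak{B} = \sum_i e_i \otimes e_i$ and $\mathfrak{b} = \sum_i e_i^2$. Since $H^3(X)=0$, indeed $H^{\mathrm{odd}}(X)=0$, the Künneth product on $X \times X$ carries no signs, so I may multiply Künneth tensors componentwise.

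First I would expand all three terms. Using $\mathfrak{b}_1 = \mathfrak{b} \otimes 1$, $a_1 = a \otimes 1$ and $a_2 = 1 \otimes a$ one gets
\[ \mathfrak{B}\,\mathfrak{b}_1 = \sum_i (e_i \mathfrak{b}) \otimes e_i, \qquad \mathfrak{B}\, a_1^2 = \sum_i (e_i a^2) \otimes e_i, \qquad \mathfrak{b}_1 a_1 a_2 = (\mathfrak{b}a) \otimes a. \]
The essential bookkeeping step is that the last term carries $\otimes a$ rather than $\otimes e_i$; to compare the three terms componentwise I expand $a = \sum_i (a,e_i)\, e_i$ in the second factor. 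The difference of the two sides of the asserted relation then reads $\sum_i c_i \otimes e_i$ with
\[ c_i = (a,a)\, e_i \mathfrak{b} - (r+2)\, e_i a^2 + 2(a,e_i)\, \mathfrak{b}a \in H^6(X). \]
Since the $e_i$ are linearly independent in $H^2(X)$ and the Künneth decomposition is a direct sum, the relation is equivalent to $c_i = 0$ for every $i$.

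To prove $c_i = 0$ I would pair against an arbitrary $\gamma \in H^2(X)$; this suffices by Poincaré duality, as $H^6(X) \times H^2(X) \to \mathbb{Q}$, $(\xi,\gamma) \mapsto \int_X \xi\gamma$, is a perfect pairing. The two $\mathfrak{b}$-terms are handled by Lemma~\ref{bb}, giving $\int_X e_i \mathfrak{b}\gamma = c_X(r+2)(e_i,\gamma)$ and $\int_X \mathfrak{b}a\gamma = c_X(r+2)(a,\gamma)$. The middle term uses the polarized degree-four Fujiki relation $\int_X \alpha\beta\gamma\delta = c_X\big[(\alpha,\beta)(\gamma,\delta)+(\alpha,\gamma)(\beta,\delta)+(\alpha,\delta)(\beta,\gamma)\big]$ already employed in the proof of Lemma~\ref{e_a^2}, which yields $\int_X a^2 e_i \gamma = c_X\big[(a,a)(e_i,\gamma) + 2(a,e_i)(a,\gamma)\big]$. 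Substituting, the $(e_i,\gamma)$-contributions of the first two terms cancel and the $(a,\gamma)$-contributions of the second and third cancel, so $\int_X c_i \gamma = 0$ for all $\gamma$, hence $c_i=0$.

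There is no serious obstacle here: the statement is a bilinear bookkeeping identity in cohomology. The only points requiring care are the componentwise expansion of the mixed term $\mathfrak{b}_1 a_1 a_2$ against the basis $(e_i)$ and the correct symmetrized form of the Fujiki relation; it is precisely the interplay between the factor $(r+2)$ and the factor $2$ that forces the two cancellations above, which is why these are the natural coefficients appearing in the relation.
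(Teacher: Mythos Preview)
Your proof is correct and follows essentially the same approach as the paper: both reduce the identity, which lies in the K\"unneth summand $H^6(X)\otimes H^2(X)$, to a Poincar\'e duality check on the first factor against all $\gamma\in H^2(X)$, using Lemma~\ref{bb} and the polarized Fujiki relation. The only cosmetic difference is that the paper multiplies the difference by $\gamma_1$ directly and computes in $H^8(X)\otimes H^2(X)$, whereas you first extract the coefficients $c_i\in H^6(X)$ along the basis $(e_i)$ of the second factor and then pair each with $\gamma$; the underlying computation and cancellations are identical.
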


\begin{proof}
First observe that it suffices to show
\begin{equation*}
\left( (r+2) \mathfrak{B} a_1^2 - 2 \mathfrak{b}_1 a_1 a_2 - (a,a) \mathfrak{B} \mathfrak{b}_1 \right) \gamma_1 = 0
\end{equation*}
for all $\gamma \in H^2(X)$ by Poincaré duality. A direct computation then shows indeed
\begin{align*}
{\rm LHS} = \ &(r+2) \mathfrak{B} a_1^2 \gamma_1 - 2 \mathfrak{b}_1 a_1 a_2 \gamma_1 - (a,a) \mathfrak{B} \mathfrak{b}_1 \gamma_1 \\ = \ &(r+2) \sum_{i=1}^r (e_i a^2 \gamma) \otimes e_i - 2 \sum_{i=1}^r (e_i^2 a \gamma) \otimes a - (a,a) \sum_{i,j=1}^r (e_i e_j^2 \gamma) \otimes e_i \\ = \ &(r+2)c_X \cdot \mathbf{1} \otimes \left( \sum_{i=1}^r e_i \big( 2 (a,\gamma)(a,e_i) + (a,a)(\gamma, e_i) \big) \right) \\ - \ &2 c_X \cdot \mathbf{1} \otimes a \cdot \sum_{i=1}^r \big( (a,\gamma) + 2(a, e_i)(\gamma, e_i) \big) \\ - \ &c_X (a,a) \cdot \mathbf{1} \otimes \left( \sum_{i,j=1}^r e_i \big( (\gamma, e_i) + 2\delta_{ij} (\gamma, e_j) \big) \right) \\ = \ &2(r+2)c_X (a, \gamma) \cdot \mathbf{1} \otimes a + (r+2) c_X (a,a) \cdot \mathbf{1} \otimes \gamma \\ - \ &2(r+2) c_X (a, \gamma) \cdot \mathbf{1} \otimes a \\ - \ &(r+2) c_X (a,a) \cdot \mathbf{1} \otimes \gamma \\ = \ &0.
\end{align*}
\end{proof}

An easy consequence of Proposition~\ref{relCohom} is the next relation.

\begin{prop}\label{relCohom2}
For all $a \in H^2(X, \mathbb{Q})$ we have $r \mathfrak{B} \mathfrak{b}_1 a_1 = \mathfrak{b}_1^2 a_2$ in $H^{10}(X \times X, \mathbb{Q})$.
\end{prop}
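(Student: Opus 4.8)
The plan is to reduce the claimed identity, which lives in $H^{10}(X\times X,\mathbb{Q})$, to the single scalar statement $\int_X \mathfrak{b}^2 = r(r+2)c_X$, and to extract this scalar from Proposition~\ref{relCohom} by restricting to the diagonal. The only genuine idea needed is this diagonal restriction; everything else is bookkeeping of Künneth components.

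First I would observe that both sides lie in the Künneth summand $H^8(X)\otimes H^2(X)$, which makes the comparison transparent. Writing $\mathfrak{B} = \sum_{i=1}^r e_i\otimes e_i$ in an orthonormal basis, one has $\mathfrak{B}\mathfrak{b}_1 a_1 = \sum_i (\mathfrak{b} a e_i)\otimes e_i$ with each factor $\mathfrak{b} a e_i\in H^8(X)$, while $\mathfrak{b}_1^2 a_2 = \mathfrak{b}^2\otimes a$ with $\mathfrak{b}^2\in H^8(X)$. Since $H^8(X)=\mathbb{Q}\cdot\mathbf{1}$, comparing the two classes amounts to comparing the associated elements of $H^2(X)$. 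On the left, Lemma~\ref{bb} gives $\mathfrak{b} a e_i=c_X(r+2)(a,e_i)\mathbf{1}$, so that $\mathfrak{B}\mathfrak{b}_1 a_1 = c_X(r+2)\,\mathbf{1}\otimes\!\big(\sum_i(a,e_i)e_i\big)=c_X(r+2)\,\mathbf{1}\otimes a$; on the right, $\mathfrak{b}_1^2 a_2=(\int_X\mathfrak{b}^2)\,\mathbf{1}\otimes a$. Hence the claim $r\mathfrak{B}\mathfrak{b}_1 a_1=\mathfrak{b}_1^2 a_2$ is equivalent to the scalar identity $\int_X\mathfrak{b}^2=r(r+2)c_X$.

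To obtain this scalar I would pull Proposition~\ref{relCohom} back along the diagonal embedding $\Delta\colon X\hookrightarrow X\times X$. Since $\Delta^\ast$ is a ring homomorphism with $\Delta^\ast\mathfrak{B}=\Delta^\ast\mathfrak{b}_1=\mathfrak{b}$ and $\Delta^\ast a_1=\Delta^\ast a_2=a$, the relation of Proposition~\ref{relCohom} becomes $(a,a)\mathfrak{b}^2=(r+2)\mathfrak{b} a^2-2\mathfrak{b} a^2=r\,\mathfrak{b} a^2$ in $H^8(X)$. Applying $\int_X$ and invoking Lemma~\ref{bb} with $\gamma=\gamma'=a$ gives $(a,a)\int_X\mathfrak{b}^2=r\,c_X(r+2)(a,a)$, and cancelling $(a,a)$ for any class $a$ with $(a,a)\neq 0$—which exists because $(-,-)$ is non-degenerate—yields $\int_X\mathfrak{b}^2=r(r+2)c_X$, completing the proof.

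There is no serious obstacle here: once the reduction is set up, the computation is routine. The two points demanding a little care are the bookkeeping of Künneth components—ensuring that both classes genuinely sit in $H^8(X)\otimes H^2(X)$ and collapse to multiples of $\mathbf{1}\otimes a$ via Lemma~\ref{bb}—and the choice of a class $a$ with $(a,a)\neq 0$ to license the final cancellation. Both are immediate, which is why the statement is indeed an easy consequence of Proposition~\ref{relCohom}.
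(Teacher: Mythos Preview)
Your proof is correct and matches the paper's claim that Proposition~\ref{relCohom2} is an easy consequence of Proposition~\ref{relCohom}: both sides lie in $H^8(X)\otimes H^2(X)$, Lemma~\ref{bb} collapses them to multiples of $\mathbf{1}\otimes a$, and your diagonal restriction of Proposition~\ref{relCohom} yields the required scalar $\int_X\mathfrak{b}^2=r(r+2)c_X$. One could alternatively obtain this scalar by summing Lemma~\ref{bb} over an orthonormal basis (taking $\gamma=\gamma'=e_i$), bypassing Proposition~\ref{relCohom} entirely, but your route is equally valid and aligns with the paper's phrasing.
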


In the same vein, one shows:

\begin{prop}\label{relCohom3}
For all $a \in H^2(X, \mathbb{Q})$ the following relation holds in $H^{10}(X \times X, \mathbb{Q})$:
\begin{equation*}
(r+2) \mathfrak{B}^2 a_1 = 2\mathfrak{B} \mathfrak{b}_1 a_2 + \mathfrak{b}_1 \mathfrak{b}_2 a_1.
\end{equation*}
\end{prop}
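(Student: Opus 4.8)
The plan is to follow the same Poincaré-duality reduction used in the proof of Proposition~\ref{relCohom}. First I would observe that, writing $\mathfrak{B} = \sum_i e_i \otimes e_i$ in an orthonormal basis, all three terms of the asserted identity lie in the single Künneth summand $H^6(X) \otimes H^4(X)$ of $H^{10}(X \times X)$: indeed $\mathfrak{B}^2 a_1 = \sum_{i,j}(e_ie_ja)\otimes(e_ie_j)$, $\mathfrak{B}\mathfrak{b}_1 a_2 = \sum_i(e_i\mathfrak{b})\otimes(e_ia)$ and $\mathfrak{b}_1\mathfrak{b}_2 a_1 = (\mathfrak{b}a)\otimes\mathfrak{b}$. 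A class in $H^6(X)\otimes H^4(X)$ vanishes if and only if its product with $\gamma_1$ vanishes for every $\gamma \in H^2(X)$, because cupping the first factor with $\gamma$ and using $H^8(X) \cong \mathbb{Q}\cdot\mathbf{1}$ recovers the perfect Poincaré pairing $H^6(X)\times H^2(X)\to\mathbb{Q}$. Hence it suffices to show
\[
\bigl((r+2)\mathfrak{B}^2 a_1 - 2\mathfrak{B}\mathfrak{b}_1 a_2 - \mathfrak{b}_1\mathfrak{b}_2 a_1\bigr)\gamma_1 = 0 \in H^8(X)\otimes H^4(X)
\]
for all $\gamma \in H^2(X)$, an identity that after cancelling $\mathbf{1}$ lives in $H^4(X)$.

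Next I would evaluate each term. The only ingredients are Lemma~\ref{bb} and the Fujiki-type quartic formula $\int_X\alpha_1\alpha_2\alpha_3\alpha_4 = c_X\bigl((\alpha_1,\alpha_2)(\alpha_3,\alpha_4)+(\alpha_1,\alpha_3)(\alpha_2,\alpha_4)+(\alpha_1,\alpha_4)(\alpha_2,\alpha_3)\bigr)$, together with the orthonormal-basis identities $\sum_i(\beta,e_i)e_i=\beta$ and $\sum_i e_i^2=\mathfrak{b}$. For the middle and last terms, cupping with $\gamma_1$ turns the first factors $e_i\mathfrak{b}$ and $\mathfrak{b}a$ into the numbers $\int_X\gamma e_i\mathfrak{b}=c_X(r+2)(\gamma,e_i)$ and $\int_X\gamma\mathfrak{b}a=c_X(r+2)(\gamma,a)$ by Lemma~\ref{bb}; resumming then yields $-2c_X(r+2)\,\mathbf{1}\otimes\gamma a$ and $-c_X(r+2)(\gamma,a)\,\mathbf{1}\otimes\mathfrak{b}$ respectively.

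The one step requiring care, and the place where the quartic formula enters, is the first term: cupping with $\gamma_1$ replaces $e_ie_ja$ by $\int_X\gamma e_ie_ja = c_X\bigl((\gamma,e_i)(e_j,a)+(\gamma,e_j)(e_i,a)+(\gamma,a)\delta_{ij}\bigr)$, and I would then resum $\sum_{i,j}(\int_X\gamma e_ie_ja)\,e_ie_j$. The two mixed terms each collapse to $\gamma a$ via $\sum_i(\gamma,e_i)e_i=\gamma$ and $\sum_j(a,e_j)e_j=a$, while the diagonal term gives $(\gamma,a)\sum_i e_i^2=(\gamma,a)\mathfrak{b}$, so the first term becomes $(r+2)c_X\,\mathbf{1}\otimes\bigl(2\gamma a+(\gamma,a)\mathfrak{b}\bigr)$. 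Adding the three contributions, the $2\gamma a$ cancels the middle term and the $(\gamma,a)\mathfrak{b}$ cancels the last term, giving $0$ as required. The main obstacle is thus purely bookkeeping: correctly organizing the double sum coming from $\mathfrak{B}^2$ and applying the three-term quartic pairing, after which the cancellation is forced by the coefficients $(r+2)$, $2$ and $1$ appearing in the statement.
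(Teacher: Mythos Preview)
Your proposal is correct and follows exactly the approach the paper intends: the phrase ``in the same vein'' refers back to the proof of Proposition~\ref{relCohom}, i.e.\ the Poincar\'e-duality reduction to cupping with $\gamma_1$ and a direct orthonormal-basis computation using Lemma~\ref{bb} and the Fujiki quartic formula. Your bookkeeping of the three terms and their cancellation is accurate.
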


\begin{prop}\label{H}
Let $X$ be a hyperkähler variety of complex dimension~$4$ with vanishing $H^3(X)$. Whenever $L \in {\rm CH}^2(X \times X)$ is a lift of $\mathfrak{B}$ and $l \in {\rm CH}^2(X)$ a lift of $\mathfrak{b}$ then
\begin{equation*}
H := \frac{4}{r(r+2)}(l_2^2 - l_1^2) + \frac{2}{r+2}(l_2 - l_1)L \in {\rm CH}^4(X \times X)
\end{equation*}
is a lift of the grading operator $h$.
\end{prop}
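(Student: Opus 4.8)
The plan is to avoid a direct degree-by-degree computation of the correspondence action and instead leverage what is already known about the commutator $\widetilde{H}_a = (a_2 - a_1)\widetilde{F}_a$, namely that it lifts $(a,a)h$ (this follows from Corollary~\ref{T_a} and Proposition~\ref{f_a}, as recorded in the definition of $\widetilde{H}_a$). Concretely, I would fix a single class $a \in H^2(X,\mathbb{Q})$ with $(a,a) \neq 0$, which exists because $(-,-)$ is non-degenerate, and prove the \emph{cohomological} identity ${\rm cl}(\widetilde{H}_a) = (a,a)\,{\rm cl}(H)$. Once this is known, the operator induced by ${\rm cl}(H)$ is $\tfrac{1}{(a,a)}$ times the one induced by ${\rm cl}(\widetilde{H}_a)$, i.e.\ $\tfrac{1}{(a,a)}\cdot (a,a)h = h$, so $H$ is a lift of $h$. (Here I use that $c_X = 1$ for $K3^{[n]}$-type; carrying $c_X$ through the computation below yields ${\rm cl}(\widetilde{H}_a) = \tfrac{(a,a)}{c_X}{\rm cl}(H)$, which is why the stated formula for $H$, free of $c_X$, lifts $h$ precisely in the normalization $c_X = 1$.) The conceptual point is that the right-hand side $(a,a)\,{\rm cl}(H)$ is independent of $a$, so the content of the identity is exactly that all $a$-dependence on the left disappears in cohomology.

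First I would expand $\widetilde{H}_a = (a_2 - a_1)\widetilde{F}_a$ using the formula for $\widetilde{F}_a$ from Corollary~\ref{T_a}. Since $a_1, a_2, l_1, l_2$ are pulled back from the two factors they commute, and the product breaks up into pure terms $l_i a_i^2$, mixed terms $l_i a_1 a_2$, and the term $L(a_2^2 - a_1^2)$ coming from $L(a_2-a_1)(a_1+a_2)$, each with an explicit rational coefficient. I would then pass to $H^8(X \times X)$, replacing $l$ by $\mathfrak{b}$ and $L$ by $\mathfrak{B}$, so that the goal becomes to match this expression against $(a,a)\bigl(\tfrac{4}{r(r+2)}(\mathfrak{b}_2^2 - \mathfrak{b}_1^2) + \tfrac{2}{r+2}(\mathfrak{b}_2 - \mathfrak{b}_1)\mathfrak{B}\bigr)$.

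The crux is eliminating the mixed terms. I would apply Proposition~\ref{relCohom} to rewrite $\mathfrak{b}_1 a_1 a_2$ in terms of $\mathfrak{B}a_1^2$ and $\mathfrak{B}\mathfrak{b}_1$, together with its image under the factor-swapping involution of $X \times X$ (under which $\mathfrak{B}$ is invariant and the indices $1,2$ are exchanged) to rewrite $\mathfrak{b}_2 a_1 a_2$. The $\mathfrak{B}a_i^2$ contributions this produces then cancel \emph{exactly} against the term $\mathfrak{B}(a_2^2 - a_1^2)$; this cancellation is the delicate point, and it works because the coefficient $\tfrac{4}{r+2}$ on the mixed terms is balanced by the factor $(r+2)$ appearing in Proposition~\ref{relCohom}. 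What remains from this step is precisely the expected multiple $\tfrac{2}{r+2}(\mathfrak{b}_2 - \mathfrak{b}_1)\mathfrak{B}$ of $(a,a)\,{\rm cl}(H)$'s second summand.

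Finally I would treat the surviving pure terms $\mathfrak{b}_2 a_2^2 - \mathfrak{b}_1 a_1^2$, each pulled back from a single factor, via the auxiliary identity $\mathfrak{b}\,a^2 = \tfrac{(a,a)}{r}\mathfrak{b}^2$ in the one-dimensional group $H^8(X)$. This reduces to comparing integrals, and Lemma~\ref{bb} gives $\int_X \mathfrak{b}\,a^2 = (r+2)(a,a)$ and $\int_X \mathfrak{b}^2 = (r+2)r$, whose ratio is $\tfrac{(a,a)}{r}$; substituting turns $\tfrac{4}{r+2}(\mathfrak{b}_2 a_2^2 - \mathfrak{b}_1 a_1^2)$ into $(a,a)\tfrac{4}{r(r+2)}(\mathfrak{b}_2^2 - \mathfrak{b}_1^2)$, completing the match. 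I do not expect a genuine obstacle here: the entire argument is a finite cohomological computation, and the only real hazards are the bookkeeping of many commuting terms and the discipline of invoking \emph{both} Proposition~\ref{relCohom} and its factor-swapped version; once these are set up, the coefficient matchings that force the $\mathfrak{B}a_i^2$ terms to cancel are automatic.
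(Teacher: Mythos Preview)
Your proposal is correct and follows essentially the same route as the paper: expand $[\widetilde{H}_a]=(a_2-a_1)[\widetilde{F}_a]$ in cohomology, use Proposition~\ref{relCohom} together with its transpose (the paper subtracts the two into a single identity, you apply them separately --- a purely cosmetic difference) to kill the mixed terms, then invoke Lemma~\ref{bb} to convert $\mathfrak{b}_i a_i^2$ into $\tfrac{(a,a)}{r}\mathfrak{b}_i^2$, and conclude from $[\widetilde{H}_a]=(a,a)[H]$. Your parenthetical observation about the role of $c_X$ is a correct refinement that the paper's proof leaves implicit.
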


\begin{proof}
Transposing the relation from Proposition~\ref{relCohom} and subtracting the two equations yields
\begin{equation*}
(a,a)\mathfrak{B}(\mathfrak{b}_2 - \mathfrak{b}_1) = (r+2) \mathfrak{B}(a_2^2 - a_1^2) -2a_1a_2(\mathfrak{b}_2 - \mathfrak{b}_1).
\end{equation*}
Now,
\begin{align*}
\frac{r+2}{2}[\widetilde{H}_a] &= \frac{r+2}{2} (a_2-a_1) [\widetilde{F}_a] \\ &= (r+2) \mathfrak{B} (a_2^2 - a_1^2) - 2a_1 a_2 (\mathfrak{b}_2 - \mathfrak{b}_1) + 2(\mathfrak{b}_2 a_2^2 - \mathfrak{b}_1 a_1^2) \\ &= (a,a) \mathfrak{B} (\mathfrak{b}_2 - \mathfrak{b}_1) + 2(\mathfrak{b}_2 a_2^2 - \mathfrak{b}_1 a_1^2) \\ &= (a,a) \mathfrak{B} (\mathfrak{b}_2 - \mathfrak{b}_1) + \frac{2}{r} (a,a) (\mathfrak{b}_2^2 - \mathfrak{b}_1^2) \\ &= \frac{r+2}{2}(a,a) [H].
\end{align*}
We used the above equation in line three and Lemma~\ref{bb} in line four. Hence, $\widetilde{H}_a$ and $(a,a)H$ agree in cohomology, and therefore $H$ is a lift of $h$ as long as there exists some $a$ with $(a,a) \neq 0$ which is always the case for formal reasons.
\end{proof}

The following proof is inspired by the proof of \cite[Theorem~2.2]{SV}.

\begin{proof}[Proof of Theorem~\ref{eigspace}]
In codimensions $0$ and $1$ the cycle class map injects into cohomology. Also, $L_\ast({\rm CH}^i(X)) \subseteq {\rm CH}^{i-2}(X)$ and $(l_1^2)_\ast$ acts only on ${\rm CH}^0(X)$ while $(l_2^2)_\ast$ acts only on ${\rm CH}^4(X)$. Now, for $Z \in {\rm CH}^2(X)$ we have $L_\ast(Z) = 0$ by cohomology. Hence, $H_\ast(Z) = \frac{-2}{r+2} L_\ast(lZ)$. By the hypotheses, $L_\ast(l \cdot L_\ast(\sigma)) = (r+2) L_\ast(\sigma)$ for all $\sigma \in {\rm CH}^4(X)$, so
\begin{align*}
((H_\ast + 2 {\rm id}) \circ H_\ast) (Z) &= (H_\ast + 2{\rm id})\left(\frac{-2}{r+2} L_\ast(lZ) \right) \\ &= \frac{4}{(r+2)^2} L_\ast(l \cdot L_\ast(lZ)) - \frac{4}{r+2} L_\ast(lZ) = 0,
\end{align*}
giving the decomposition of ${\rm CH}^2(X)$. Next, let $Z \in {\rm CH}^3(X)$. Then $H_\ast(Z) = \frac{2}{r+2} l L_\ast(Z)$, and $L_\ast(Z)$ is a divisor class. Therefore and by Lemma~\ref{BB}, $L_\ast(Z) = 0$ if and only if $Z \in {\rm CH}^3(X)_{\rm hom}$ is homologically trivial. Again by Lemma~\ref{BB}, in cohomology the cup product by $\frac{1}{r+2} [l]$ is the inverse isomorphism of $[L]_\ast: H^6(X) \rightarrow H^2(X)$. Hence, if $H_\ast(Z) = 0$, then in cohomology
\begin{equation*}
0 = [H_\ast(Z)] = \frac{2}{r+2}[l] \cup [L_\ast(Z)] = 2[Z],
\end{equation*}
hence $Z \in {\rm CH}^3(X)_{\rm hom}$. We have shown $\Lambda^3_0 = {\rm CH}^3(X)_{\rm hom}$. Next, by the quadratic equation for $L$, we have $((L^2)_\ast - 2 {\rm id})(Z) = \frac{-2}{r+2} l L_\ast(Z) = -H_\ast(Z)$, or equivalently,
\begin{equation*}
(H_\ast - 2 {\rm id}) (Z) = - (L^2)_\ast(Z).
\end{equation*}
But here, $(L^2)_\ast(Z) \in {\rm CH}^3(X)_{\rm hom} = \Lambda^3_0$ for degree reasons. Therefore, $H_\ast \circ (H_\ast - 2 {\rm id}) = 0$ on ${\rm CH}^3(X)$, giving the desired decomposition. In order to see that $l \cdot D$ is an element of $\Lambda^3_2$ for every divisor class $D$, just note $H_\ast(l \cdot D) = \frac{2}{r+2} l \cdot L_\ast(l D)$, and $L_\ast(l D)$ is a divisor which in cohomology agrees with $(r+2)D$. At last, let $Z \in {\rm CH}^4(X)$. Then
\begin{equation*}
H_\ast(Z) = \frac{4}{r(r+2)} \left( \int_X [Z] \right) l^2 + \frac{2}{r+2} l L_\ast(Z).
\end{equation*}
Now, $\int_X [H_\ast(Z)] = 4 \int_X [Z]$, implying
\begin{equation*}
H_\ast(H_\ast(Z)) = \frac{16}{r(r+2)} \left( \int_X [Z] \right) l^2 + \frac{4}{r+2} l L_\ast(Z),
\end{equation*}
in particular $(H_\ast^2 - 4 H_\ast)(Z) = \frac{-4}{r+2} l L_\ast(Z)$. Applying $H_\ast$ once again yields
\begin{equation*}
(H_\ast \circ (H_\ast^2 - 4 H_\ast))(Z) = \frac{-8}{r+2} l L_\ast(Z) = 2 (H_\ast^2 - 4 H_\ast)(Z),
\end{equation*}
using that $\int_X [l L_\ast(Z)] = 0$. This is because $L_\ast(Z) \in {\rm CH}^2(X)_{\rm hom}$ for degree reasons. We have seen, then, that $H_\ast \circ (H_\ast^2 - 4 H_\ast) = 2 (H_\ast^2 - 4 H_\ast)$, or equivalently,
\begin{equation*}
H_\ast \circ (H_\ast - 2 {\rm id}) \circ (H_\ast - 4 {\rm id}) = 0,
\end{equation*}
giving the desired decomposition for ${\rm CH}^4(X)$. Finally, let $Z \in \Lambda^4_4$. We want to show that $Z$ is a multiple of $l^2$. Indeed,
\begin{equation*}
4Z = H_\ast(Z) = l^2 \cdot \frac{4}{r(r+2)} \int_X [Z] + \frac{2}{r+2} l \cdot L_\ast(Z).
\end{equation*}
Applying $L_\ast$ to the equation and using again the hypothesis $L_\ast(l \cdot L_\ast(\sigma)) = (r+2) L_\ast(\sigma)$ for all $\sigma \in {\rm CH}^4(X)$, we get
\begin{equation*}
4 L_\ast(Z) = L_\ast(l^2) \cdot \frac{4}{r(r+2)} \int_X [Z] + \frac{2}{r+2} L_\ast( l \cdot L_\ast(Z)) = 2 L_\ast(Z),
\end{equation*}
hence $L_\ast(Z) = 0$. But then, the previous equation implies that $Z$ is a multiple of $l^2$. A similar argument shows $L_\ast(\Lambda^4_0) = 0$. The equation $\Lambda^4_2 = l \cdot L_\ast({\rm CH}^4(X))$ is immediate from the explicit formula for $H$ after observing $\Lambda^4_2 \subseteq {\rm CH}^4(X)_{\rm hom}$.
\end{proof}

\begin{proof}[Proof of Theorem~\ref{agreeFourier}]
In codimensions $0$ and $1$ there is nothing to show. For codimension~$2$ it suffices to show only the two inclusions
\begin{equation*}
{^{e^L}{\rm CH}^2}(X)_0 \subseteq {\rm CH}^2(X)_0 \ \text{ and } \ {^{e^L}{\rm CH}^2}(X)_2 \subseteq {\rm CH}^2(X)_2,
\end{equation*}
because in both cases their sum is ${\rm CH}^2(X)$, and the sums are direct. Let $Z \in {^{e^L}{\rm CH}^2}(X)_0$, i.e., $(e^L)_\ast(Z) \in {\rm CH}^2(X)$ which is easily seen to be equivalent to $(L^3)_\ast(Z) = 0$. Now, applying the quadratic equation for $L$ twice, it can be checked that
\begin{align*}
L^3 &= \frac{2r}{r+2} \Delta_\ast(l) + \frac{r+10}{(r+2)^2} L l_1 l_2 + {\rm cst}_1 \cdot L(l_1^2 + l_2^2) + {\rm cst}_2 \cdot (l_1^2 l_2 + l_1 l_2^2),
\end{align*}
where we used $L \cdot \Delta = L \cdot \Delta_\ast([X]) = \Delta_\ast(\Delta^\ast(L)) = \Delta_\ast(l)$. Applying this to $Z$ gives
\begin{align*}
0 = (L^3)_\ast(Z) &= \frac{2r}{r+2} l \cdot Z + \frac{r+10}{(r+2)^2} l \cdot L_\ast(l \cdot Z) + {\rm cst} \cdot l^2.
\end{align*}
Applying $L_\ast$ then yields $L_\ast(l \cdot Z) = 0$ by the relations $L_\ast(l^2) = 0$ and $L_\ast(l \cdot L_\ast(\sigma)) = (r+2) L_\ast(\sigma)$ for all $\sigma \in {\rm CH}^4(X)$. But now, the explicit formula for $H$ yields $H_\ast(Z) = \frac{2}{r+2} l \cdot L_\ast(Z) - \frac{2}{r+2} L_\ast(l \cdot Z) = 0$. For the second inclusion let $Z \in {^{e^L}{\rm CH}^2}(X)_2$ which is now equivalent to $(L^2)_\ast(Z) = 0$. Using the quadratic equation for $L$, we get
\begin{equation*}
0 = (L^2)_\ast(Z) = 2Z - \frac{2}{r+2} L_\ast(l \cdot Z) + \frac{1}{r+2} \left( \int_X [l \cdot Z] \right) l.
\end{equation*}
Now, by \cite[Proposition~4.1]{SV}, $l \cdot Z \in {^{e^L}{\rm CH}^4}(X)_2$, and the latter by \cite[Theorem~4]{SV} agrees with $l \cdot L_\ast({\rm CH}^4(X)) \subseteq {\rm CH}^4(X)_{\rm hom}$, hence $\int_X [l \cdot Z] = 0$. We thus obtain $L_\ast(l \cdot Z) = (r+2) Z$, and this is precisely equivalent to $H_\ast(Z) = 2Z$. In codimension $3$, let first $Z \in {^{e^L}{\rm CH}^3}(X)_2$. This is equivalent to $L_\ast(Z) = 0$. But now, $H_\ast(Z) = \frac{2}{r+2} l \cdot L_\ast(Z)$, so ${^{e^L}{\rm CH}^3}(X)_2 = {\rm CH}^3(X)_2$, as desired. Next, $Z \in {^{e^L}{\rm CH}^3}(X)_0$ is equivalent to $(L^2)_\ast(Z) = 0$, i.e., by the quadratic equation for~$L$,
\begin{equation*}
0 = (L^2)_\ast(Z) = 2Z - \frac{2}{r+2} l \cdot L_\ast(Z),
\end{equation*}
hence $H_\ast(Z) = \frac{2}{r+2} l \cdot L_\ast(Z) = 2Z$, concluding the codimension $3$ case. In codimension $4$, by \cite[Theorem~4]{SV}, we already know the equality of the direct summands
\begin{align*}
{^{e^L}{\rm CH}^4}(X)_0 &= \langle l^2 \rangle = \Lambda^4_4 = {\rm CH}^4(X)_0, \\
{^{e^L}{\rm CH}^4}(X)_2 &= l \cdot L_\ast({\rm CH}^4(X)) = \Lambda^4_2 = {\rm CH}^4(X)_2.
\end{align*}
Finally, let $Z \in {^{e^L}{\rm CH}^4}(X)_4$ which is equivalent to the vanishing of both $L_\ast(Z)$ and $\left( \int_X [Z] \right) [X]$. Thus,
\begin{equation*}
H_\ast(Z) = \frac{4}{r(r+2)} \left( \int_X [Z] \right) l^2 + \frac{2}{r+2} l \cdot L_\ast(Z) = 0,
\end{equation*}
i.e., $Z \in \Lambda^4_0 = {\rm CH}^4(X)_4$.
\end{proof}

\begin{rmk}\label{[Ta,Tb]=0}
Let $X$ be a hyperkähler variety of complex dimension $4$ with $H^3(X) = 0$ and $L \in {\rm CH}^2(X \times X)$ any lift of $\mathfrak{B}$ as well as $l \in {\rm CH}^2(X)$ any lift of $\mathfrak{b}$. Then for all divisor classes $a, b \in {\rm CH}^1(X)$ we have $[\widetilde{F}_a, \widetilde{F}_b] = 0.$ We omit the proof as it is a straightforward formal computation, although a little lengthy if spelled out in detail. Just use that all terms symmetric in $a$ and $b$ cancel out and that $(p_{13})_\ast \left( L_{12} \cdot L_{23} \cdot a_2 \right)$ is a divisor on $X \times X$ vanishing in cohomology.
\end{rmk}

\subsection{Fano variety of lines.}

\begin{proof}[Proof of Theorem~\ref{LieChow}]
By Corollary~\ref{T_a}, $F_g$ lifts $f_g$, and by Proposition~\ref{H}, $H$ lifts $h$. Moreover, $\Delta_\ast(g)$ lifts $e_g$. By the explicit formulas for $\Delta_\ast(g) = \Delta g_1$, $F_g$ and $H$ we know that all of them lie in the tautological subring $R^\ast(F \times F)$ of Theorem~\ref{inj}. As a consequence of \cite[Proposition~6.3]{FLSV}, so do their compositions. Hence all the commutators lie in $R^\ast(F \times F)$, and as the commutation relations are true in cohomology, they are true in the Chow ring as well.
\end{proof}

\begin{conj}\label{conjRel}
Let $F$ be the Fano variety of lines of a smooth cubic fourfold and let $L$ be Markman's lift. We conjecture the following relations in the Chow ring for all divisor classes $a \in {\rm CH}^1(F)$ with $(a,a) \neq 0$:
\begin{align}
(a,a)L l_1 &= (r+2)L a_1^2 - 2 l_1 a_1 a_2, \\
r L l_1 a_1 &= l_1^2 a_2, \\
(r+2) L^2 a_1 &= 2L l_1 a_2 + l_1 l_2 a_1.
\end{align}
\end{conj}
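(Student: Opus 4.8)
The plan is to recognise the three displayed identities as the Chow-theoretic lifts of the cohomological relations already established in Propositions~\ref{relCohom}, \ref{relCohom2} and~\ref{relCohom3}: applying the cycle class map to each of the three equations, and using ${\rm cl}(L) = \mathfrak{B}$ and ${\rm cl}(l) = \mathfrak{b}$, recovers exactly those three cohomological relations (the first being Proposition~\ref{relCohom}, the second Proposition~\ref{relCohom2}, the third Proposition~\ref{relCohom3}). Thus all three relations hold after pushing forward to $H^\ast(F \times F)$, and the entire task is to promote these identities from cohomology to ${\rm CH}^\ast(F \times F)$. Since $a \mapsto a_1$ and $a \mapsto a_2$ are linear, the second and third relations are linear in $a$ while the first is homogeneous of degree $2$; polarising the first — that is, adjoining its bilinear companion $(a,b) L l_1 = (r+2) L a_1 b_1 - l_1(a_1 b_2 + b_1 a_2)$, which likewise holds in cohomology by polarising Proposition~\ref{relCohom} — reduces everything to verifying the relations with $a, b$ ranging over a fixed $\mathbb{Q}$-basis of ${\rm CH}^1(F)$.

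First I would dispose of the tautological case $a = g$ (hence, by linearity, any multiple of $g$). Here $g_1, g_2 \in R^\ast(F \times F)$, and by Theorem~\ref{Ldef} also $L, l_1, l_2 \in R^\ast(F \times F)$; moreover $R^\ast(F \times F)$ is closed under composition of correspondences by \cite[Proposition~6.3]{FLSV}, so every term occurring in the three relations lies in the tautological subring. As the relations already hold in cohomology, the injectivity of the cycle class map on $R^\ast(F \times F)$ from Theorem~\ref{inj} forces them to hold in ${\rm CH}^\ast(F \times F)$. This settles Conjecture~\ref{conjRel} whenever $F$ has Picard rank~$1$, in particular for very general $Y$, and supplies precisely the input needed for the generalisation of Theorem~\ref{LieChow} in Proposition~\ref{LieChowGeneralization}.

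For the remaining basis elements — the non-tautological divisor classes that appear once $F$ has Picard rank $>1$ — the same argument would go through verbatim given a sufficiently large injectivity statement. Concretely, I would enlarge $R^\ast(F \times F)$ to the subalgebra $\widetilde{R}^\ast(F \times F)$ generated additionally by the pullbacks $a_1, a_2$ of all divisor classes $a \in {\rm CH}^1(F)$, show that $\widetilde{R}^\ast(F \times F)$ remains closed under composition of correspondences (an extension of \cite[Proposition~6.3]{FLSV}), and then establish that the cycle class map is injective on $\widetilde{R}^\ast(F \times F)$ (an extension of Theorem~\ref{inj}). Granting such an injectivity result, the cohomological relations of Propositions~\ref{relCohom}--\ref{relCohom3} would lift immediately and the conjecture would follow in full.

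The hard part is precisely this last step. A specialisation argument from the very general case is unavailable, since non-tautological divisor classes occur only on special, non-very-general cubic fourfolds and therefore cannot be produced by degenerating from the Picard rank~$1$ locus where Theorem~\ref{inj} applies. One is thus forced to prove a genuinely new injectivity result for the cycle class map involving non-tautological divisor classes — exactly the ingredient flagged as missing at the end of Section~\ref{Intro} — and I expect the construction of that result, rather than any of the formal reductions above, to be the decisive obstacle.
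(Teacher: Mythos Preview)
Your proposal is correct and matches the paper's treatment: the statement is a \emph{conjecture}, so the paper does not prove it in full, but establishes only the case $a \in \mathbb{Q}\,g$ (Proposition~\ref{propRel}) via exactly the argument you give---cohomological validity from Propositions~\ref{relCohom}--\ref{relCohom3} together with injectivity of the cycle class map on $R^\ast(F \times F)$ from Theorem~\ref{inj}---and identifies the missing injectivity result for non-tautological divisor classes as the obstacle, just as you do. The one observation in the paper you do not mention is Remark~\ref{conjRelRmk}, which notes that the second relation already follows from the first by \cite[Theorem~1.4]{Voisin}, so that only the first and third relations are genuinely independent.
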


\begin{rmk}\label{conjRelRmk}
The second relation follows from the first one by \cite[Theorem~1.4]{Voisin}. Moreover, the first relation and its transpose would imply the important equality $H = H_a$ by the proof of Proposition~\ref{H} carried out in the Chow ring, replacing $\mathfrak{B}$ by $L$ and $\mathfrak{b}$ by $l$. This equality is necessary for establishing Conjecture~\ref{MainConj}. Note also that we indirectly show $H = H_a$ in the Hilbert scheme setting in Section~\ref{Hilb2}, see Remark~\ref{lastRmk}. In the Hilbert scheme setting, the above relations can be checked by computations similar to those in Section~\ref{comp}, and we expect them to hold true.
\end{rmk}

\begin{prop}\label{propRel}
Conjecture~\ref{conjRel} is true in the case of the Fano variety if $a$ is a multiple of the Plücker polarization class~$g$. In particular, $H = H_g$.
\end{prop}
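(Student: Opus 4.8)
The plan is to leverage the one feature distinguishing multiples of $g$ from arbitrary divisor classes: for $a = \lambda g$, every cycle class appearing in the three relations of Conjecture~\ref{conjRel} lies in the tautological subring $R^\ast(F \times F)$, on which the cycle class map is injective by Theorem~\ref{inj}. Since each relation is homogeneous in $a$---the first quadratic, the second and third linear---it suffices to treat $a = g$, the case $a = \lambda g$ following by multiplying through by $\lambda^2$, resp.\ $\lambda$. By Theorem~\ref{Ldef} we have $L, l_1, l_2 \in R^\ast(F \times F)$, and $g_1, g_2 \in R^\ast(F \times F)$ by definition; hence every term occurring for $a = g$---such as $L l_1$, $L g_1^2$, $l_1 g_1 g_2$, $L l_1 g_1$, $l_1^2 g_2$, $L^2 g_1$, $L l_1 g_2$ and $l_1 l_2 g_1$---is an intersection product of these generators. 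As $R^\ast(F \times F)$ is by definition a $\mathbb{Q}$-subalgebra of ${\rm CH}^\ast(F \times F)$, hence closed under the intersection product, both sides of each relation belong to $R^\ast(F \times F)$.

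The three relations already hold in cohomology: specializing Propositions~\ref{relCohom},~\ref{relCohom2} and~\ref{relCohom3} to $a = g$, and recalling that $L$ and $l_i$ lift $\mathfrak{B}$ and $\mathfrak{b}_i$, reproduces exactly the cohomological counterpart of the first, second and third relation respectively. Since both sides of each relation lie in $R^\ast(F \times F)$ and coincide after applying ${\rm cl}$, injectivity of ${\rm cl}$ on $R^\ast(F \times F)$ (Theorem~\ref{inj}) promotes each cohomological identity to the corresponding identity in ${\rm CH}^\ast(F \times F)$. This proves Conjecture~\ref{conjRel} for $a = g$, and by homogeneity for every multiple of $g$; alternatively, the second relation follows from the first via \cite[Theorem~1.4]{Voisin}, as noted in Remark~\ref{conjRelRmk}.

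For the assertion $H = H_g$ I would invoke Remark~\ref{conjRelRmk}: the first relation together with its transpose implies $H = H_g$ once the proof of Proposition~\ref{H} is repeated inside the Chow ring with $\mathfrak{B}, \mathfrak{b}$ replaced by $L, l$. The transpose of the first relation costs nothing, since the transposition involution of $F \times F$ restricts to an automorphism of $R^\ast(F \times F)$ fixing $L$ and interchanging $l_1 \leftrightarrow l_2$ and $g_1 \leftrightarrow g_2$ (the classes $I$ and $\Delta$ being symmetric); applying it to the already established first relation yields its transpose in ${\rm CH}^\ast(F \times F)$. The only further numerical input to that proof, the identity $l_i g_i^2 = \frac{1}{r}(g,g)\,l_i^2$ coming from Lemma~\ref{bb}, is likewise a relation among tautological classes and so lifts to the Chow ring by the same injectivity argument.

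I do not expect a serious obstacle here: the whole force of the statement is that restricting to multiples of $g$ keeps every term inside $R^\ast(F \times F)$, where Theorem~\ref{inj} is available. This is precisely where a general divisor class $a$ breaks down---the terms $L a_1^2$, $l_1 a_1 a_2$ and their relatives then involve non-tautological classes, leave $R^\ast(F \times F)$, and escape the known injectivity result, which is exactly the difficulty flagged in the introduction. Removing this restriction would require an injectivity statement for a larger subring of correspondences containing non-tautological divisors.
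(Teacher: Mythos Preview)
Your proposal is correct and follows essentially the same route as the paper: verify the relations in cohomology via Propositions~\ref{relCohom}--\ref{relCohom3}, observe that for $a$ a multiple of $g$ all terms lie in the tautological subring $R^\ast(F \times F)$, and invoke the injectivity of Theorem~\ref{inj} to lift them to the Chow ring, with $H = H_g$ then following from Remark~\ref{conjRelRmk}. The only small item you omit is the check that $(g,g) \neq 0$, which is a hypothesis of Conjecture~\ref{conjRel}; the paper cites \cite[Section~2]{Ottem} for this.
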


\begin{proof}
By the aforegoing Propositions~\ref{relCohom}--\ref{relCohom3} the relations are true in cohomology for all $a$. If $a$ is a multiple of $g$, then by Theorem~\ref{inj} they hold in the Chow ring as well because all occurring terms then lie in the tautological subring $R^\ast(F \times F)$. Moreover, $(g,g) \neq 0$ by \cite[Section~2]{Ottem}.
\end{proof}

Under Conjecture~\ref{conjRel}, we have the following generalization of Theorem~\ref{LieChow}.

\begin{prop}\label{LieChowGeneralization}
Let $X$ be a hyperkähler variety of $K3^{[2]}$-type endowed with a symmetric lift $L \in {\rm CH}^2(X \times X)$ satisfying the relations \eqref{eqquad} and \eqref{relationL(l^2)=0}. Let $a \in {\rm CH}^1(X)$ be a divisor class satisfying the relations of Conjecture~\ref{conjRel} with respect to $L$. Then the analogous version of Theorem~\ref{LieChow} holds with respect to $L$ and for $g$ replaced by $a$.
\end{prop}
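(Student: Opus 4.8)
The plan is to unwind the assertion that $e \mapsto \Delta_\ast(a)$, $f \mapsto F_a$, $h \mapsto H$ defines a Lie algebra homomorphism $\mathfrak{sl}_2(\mathbb{Q}) \to {\rm CH}^\ast(X \times X)$ into the three $\mathfrak{sl}_2$-commutation relations for the composition product of correspondences:
\begin{equation*}
[\Delta_\ast(a), F_a] = H, \qquad [H, \Delta_\ast(a)] = 2\Delta_\ast(a), \qquad [H, F_a] = -2F_a.
\end{equation*}
After applying ${\rm cl}$ all three hold, since $(e_a, f_a, h)$ is a Lefschetz triple and $\Delta_\ast(a), F_a, H$ lift $e_a, f_a, h$ by Corollary~\ref{T_a} and Proposition~\ref{H}; the commutativity of the diagram is then automatic, and the task is to promote each relation to an equality in ${\rm CH}^\ast(X \times X)$ using only the relations \eqref{eqquad}, \eqref{relationL(l^2)=0} and those of Conjecture~\ref{conjRel}. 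Unlike in the proof of Theorem~\ref{LieChow}, the injectivity input of Theorem~\ref{inj} is unavailable, so every relation must be derived formally.

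For the first relation I would use the composition identities $\Delta_\ast(a) \circ \Gamma = a_2 \Gamma$ and $\Gamma \circ \Delta_\ast(a) = a_1 \Gamma$, valid for every correspondence $\Gamma$ (this is the mechanism behind the definition $\widetilde H_a = (a_2-a_1)\widetilde F_a$ in the excerpt). They give $[\Delta_\ast(a), F_a] = (a_2 - a_1)F_a = H_a$, so the first relation is equivalent to $H = H_a$. I would establish this by transcribing the cohomological computation of Proposition~\ref{H} verbatim into the Chow ring, replacing $\mathfrak{B}$ by $L$ and $\mathfrak{b}$ by $l$: subtracting the first relation of Conjecture~\ref{conjRel} from its transpose yields $(a,a)L(l_2 - l_1) = (r+2)L(a_2^2 - a_1^2) - 2a_1 a_2(l_2 - l_1)$, and inserting this into the expansion $\frac{r+2}{2}\widetilde{H}_a = (r+2)L(a_2^2 - a_1^2) - 2a_1a_2(l_2-l_1) + 2(l_2 a_2^2 - l_1 a_1^2)$ collapses everything except the residual term $2(l_2 a_2^2 - l_1 a_1^2)$, reducing $H=H_a$ to the single-factor identity $l a^2 = \tfrac{(a,a)}{r} l^2$ in ${\rm CH}^4(X)$, which is the Chow lift of the consequence of Lemma~\ref{bb} used in that proof.

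For the second relation the same composition identities give $[H, \Delta_\ast(a)] = (a_1 - a_2)H$, so, using $H = H_a$, I must verify the intersection-theoretic identity $-(a_1 - a_2)^2 \widetilde{F}_a = 2(a,a)\,\Delta \cdot a_1$. The diagonal on the right can only be produced through \eqref{eqquad}, so I would substitute $2(a,a)\Delta\cdot a_1 = (a,a)a_1 L^2 + \dotsb$ from the quadratic equation, use the third relation of Conjecture~\ref{conjRel} to eliminate the term $L^2 a_1$, and then rewrite the resulting $L l_i$ and $L l_i a_i$ contributions by the first and second relations, matching both sides as the same combination of $L$-linear and tautological classes, with the dimension vanishings $l_i^2 a_i = 0$ closing the computation. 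The third relation is the most laborious: here $[H, F_a] = H \circ F_a - F_a \circ H$ is a genuine composition, so I would first record formulas for $L \circ L$ and for products of $p_i^\ast$-classes with $L$ in terms of $\Delta$, $L$ and $l$ (again via \eqref{eqquad} and Conjecture~\ref{conjRel}), exploiting the symmetries ${}^{t}H = -H$, ${}^{t}F_a = F_a$ and the divisor-vanishing technique of Remark~\ref{[Ta,Tb]=0} to control these composition terms, and then collapse the expansion to $-2F_a$.

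The main obstacle is twofold. First, the single-factor identity $l a^2 = \tfrac{(a,a)}{r} l^2$ required for $H = H_a$ is an equality of $0$-cycles that does not follow formally from the two-factor relations: relation~2 only delivers $L_\ast(l a^2) = 0$ and $L(l_2 a_2^2 - l_1 a_1^2) = 0$, not the equality of the classes themselves, which genuinely reflects the Beauville–Voisin structure of ${\rm CH}_0(X)$ and holds in the Hilbert-scheme and Fano cases precisely because $l$ and $a$ lie in the tautological ring on which ${\rm cl}$ is injective. Second, the composition computation for $[H, F_a]$ is the combinatorial heart of the argument: in contrast to the intersection-product manipulations for the first two relations it forces one to control $L \circ L$, and arranging these compositions so that every non-tautological contribution cancels is where the bulk of the work will lie.
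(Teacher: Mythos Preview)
Your overall plan---reduce to the three $\mathfrak{sl}_2$ relations and verify each formally from \eqref{eqquad}, \eqref{relationL(l^2)=0} and Conjecture~\ref{conjRel}---is exactly what the paper does. But two of your assessments diverge from the paper's actual argument in ways worth correcting.

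First, your ``main obstacle'' $l a^2 = \tfrac{(a,a)}{r} l^2$ is not an obstacle at all: pull back the first relation of Conjecture~\ref{conjRel} along the diagonal $\Delta: X \hookrightarrow X \times X$. Since $\Delta^\ast(L)=l$, $\Delta^\ast(l_1)=l$, $\Delta^\ast(a_1)=\Delta^\ast(a_2)=a$, the relation $(a,a)Ll_1 = (r+2)La_1^2 - 2l_1a_1a_2$ becomes $(a,a)l^2 = (r+2)la^2 - 2la^2 = r\,la^2$, which is precisely the single-factor identity you flagged. So the transcription of the proof of Proposition~\ref{H} into the Chow ring goes through without additional input, and $H=H_a$ follows purely formally. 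You were looking at relation~2 and at pushforwards $L_\ast$, when the right move is the diagonal \emph{pullback} of relation~1.

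Second, your route to $[H,\Delta_\ast(a)]=2\Delta_\ast(a)$ through $H=H_a$ and the identity $-(a_1-a_2)^2\widetilde{F}_a = 2(a,a)\Delta\cdot a_1$ is needlessly circuitous. The paper computes $(a_1-a_2)H$ directly from the explicit formula for $H$ (not $H_a$), uses the trivial vanishings $l_i^2 a_i=0$, and then rewrites $2\Delta_\ast(a)=2\Delta\cdot a_1$ via the quadratic equation together with relations~2 and~3 of Conjecture~\ref{conjRel}; the difference collapses by one more application of relation~2 and its transpose. No use of relation~1 or of $H=H_a$ is needed for this step.

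For $[H,F_a]=-2F_a$ your sketch is correct in spirit but vague on the key move. The paper handles the one genuinely non-intersection-theoretic term in $H\circ\widetilde{F}_a$, namely $(p_{13})_\ast\big(p_{12}^\ast(Ll_2a_1)\cdot p_{23}^\ast(L)\big)$, by first invoking the \emph{transpose} of relation~3 to write $Ll_2a_1 = \tfrac{r+2}{2}L^2a_2 - \tfrac12 l_1l_2a_2$, and then the quadratic equation dispatches the resulting $L^2$-composition. After that, $H\circ\widetilde{F}_a$ is an explicit polynomial in $L,l_i,a_i$, and the symmetries ${}^tH=-H$, ${}^t\widetilde{F}_a=\widetilde{F}_a$ finish the job as you anticipated.
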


\begin{proof}
We need to show the three necessary commutation relations
\begin{equation*}
[H, \Delta_\ast(a)] = 2 \Delta_\ast(a), \ \ [H,F_a] = -2 F_a, \ \text{ and } \ [\Delta_\ast(a),F_a] = H.
\end{equation*}
For the last of these we already indicated this in Remark~\ref{conjRelRmk}. For the first one, observe that the left hand side equals
\begin{equation*}
(a_1 - a_2) H = \frac{4}{r(r+2)} (l_1^2 a_2 + l_2^2 a_1) + \frac{2}{r+2} L(l_1 a_2 + l_2 a_1) - \frac{2}{r+2}L(l_1 a_1 + l_2 a_2).
\end{equation*}
For the right hand side use the quadratic equation for $L$ and the second and third relation of the conjecture. Then
\begin{equation*}
2 \Delta_\ast(a) = \frac{2}{r+2} L(l_1 a_2 + l_2 a_1) + \frac{2}{r(r+2)} (l_1^2 a_1 + l_2^2 a_2).
\end{equation*}
Hence the difference equals $\frac{2}{r(r+2)}(l_1^2 a_2 + l_2^2 a_1) - \frac{2}{r+2} L (l_1 a_1 + l_2 a_2) = 0$ by another application of the second relation of the conjecture and its transpose. For the second commutation relation we show without loss of generality $[H, \widetilde{F}_a] = -2 \widetilde{F}_a$. Here, consider first the composition $H \circ \widetilde{F}_a$. The only summand of the latter which is not easily computed using the projection formula, $L_\ast(l^2) = 0$ and the second relation of Conjecture~\ref{conjRel} is
\begin{equation*}
-\frac{4}{r+2} (p_{13})_\ast \Big( (p_{13})^\ast(L l_2 a_1) \cdot (p_{23})^\ast(L) \Big),
\end{equation*}
where $p_{ij}: X \times X \times X \rightarrow X \times X$ denote the projections to the factors according to the indices. Here, we use the transpose of the third relation of the conjecture in order to write $L l_2 a_1 = \frac{r+2}{2} L^2 a_2 - \frac{1}{2} l_1 l_2 a_2$. From this we deduce that the above summand equals
\begin{equation*}
2 l_1 a_2 - 2 (p_{13})_\ast \Big( (p_{13})^\ast(L^2) \cdot (p_{23})^\ast(L a_1) \Big).
\end{equation*}
Next, using the quadratic equation for $L^2$ we obtain that the latter agrees with $-4 L a_1$. We obtain $H \circ \widetilde{F}_a = -\frac{16}{r+2} l_1 a_1 + \frac{8}{r+2} l_2 a_2 - 4 L a_1$. Now, $^tH = - H$ and $^t \widetilde{F}_a = \widetilde{F}_a$, so
\begin{equation*}
[H, \widetilde{F}_a] = -4 L(a_1 + a_2) - \frac{8}{r+2}(l_1 a_1 + l_2 a_2) = -2 \widetilde{F}_a,
\end{equation*}
as desired.
\end{proof}

\subsection{Hilbert scheme of two points of a $K3$ surface.} \label{Hilb2}

Let $S$ be a smooth projective surface. Recall the Nakajima operators on the Chow ring of Hilbert schemes of points of $S$,
\begin{equation*}
\mathfrak{q}_i: \bigoplus_{n \in \mathbb{Z}} {\rm CH}^\ast(S^{[n]}) \rightarrow \bigoplus_{n \in \mathbb{Z}} {\rm CH}^\ast(S^{[n+i]} \times S).
\end{equation*}
We refer to \cite{Nakajima, Groj} and to \cite[Section~2.3]{O} for a very brief overview. Recall that $\mathfrak{q}_i$ for $i \geq 1$ is given as a correspondence by the reduced closed subscheme
\begin{equation*}
Z_{n, n+i} := \{(\xi, x, \eta) \in S^{[n]} \times S \times S^{[n+i]}: {\rm supp}(\mathcal{I}_\xi/\mathcal{I}_\eta) = \{x\}\}
\end{equation*}
of the product $S^{[n]} \times S \times S^{[n+i]}$, where $\mathcal{I}_\xi$ is the ideal sheaf corresponding to the length $n$ closed subscheme $\xi$ of $S$. Recall also that $\mathfrak{q}_{-i}$ is just the transpose of $(-1)^i \mathfrak{q}_i$. Let $\mathcal{Z}_n \subseteq S^{[n]} \times S$ be the universal family. We write $\mathcal{Z} := \mathcal{Z}_2$.

\begin{lem}\label{projZ}
Let $n \geq 1$ and $p_{S \times S^{[n]}}: S^{[n-1]} \times S \times S^{[n]} \rightarrow S \times S^{[n]}$ be the projection. Then in ${\rm CH}^\ast(S \times S^{[n]})$ we have
\begin{equation*}
(p_{S \times S^{[n]}})_\ast \left( Z_{n-1,n} \right) = {^t \mathcal{Z}_n},
\end{equation*}
where ${^t \mathcal{Z}_n}$ is the transpose of $\mathcal{Z}_n$.
\end{lem}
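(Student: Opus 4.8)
The plan is to compute the left-hand side geometrically as the pushforward of the fundamental cycle $[Z_{n-1,n}]$ along the restriction $\bar{p} := p_{S \times S^{[n]}}|_{Z_{n-1,n}}$, which sends $(\xi, x, \eta) \mapsto (x, \eta)$ and is proper since $S^{[n-1]}$ is projective. Provided I can show that (i) $Z_{n-1,n}$ and ${^t\mathcal{Z}_n}$ are irreducible of dimension $2n$, (ii) $\bar{p}$ maps $Z_{n-1,n}$ onto ${^t\mathcal{Z}_n}$, and (iii) $\bar{p}$ is generically injective, then the pushforward formula for cycle classes yields $p_\ast[Z_{n-1,n}] = \bar{p}_\ast[Z_{n-1,n}] = \deg(\bar{p})\,[{^t\mathcal{Z}_n}] = [{^t\mathcal{Z}_n}]$, which is exactly the claim.

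For (ii) I would argue that a pair $(x, \eta)$ lies in the image of $\bar{p}$ if and only if there is some $\xi \in S^{[n-1]}$ with $\mathcal{I}_\eta \subseteq \mathcal{I}_\xi$ and $\mathcal{I}_\xi / \mathcal{I}_\eta$ of length $1$ supported at $x$, that is, if and only if $\mathcal{O}_\eta = \mathcal{O}_S/\mathcal{I}_\eta$ admits a length-$1$ submodule concentrated at $x$. Such a submodule exists precisely when the Artinian local ring $\mathcal{O}_{\eta,x}$ is nonzero, i.e. when $x \in {\rm supp}(\eta)$, so the set-theoretic image of $\bar{p}$ is exactly ${^t\mathcal{Z}_n}$. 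For (iii) I restrict to the dense open locus where $\eta$ is reduced ($n$ distinct points) and $x \in \eta$: there the only possible $\xi$ is $\eta$ with the reduced point $x$ deleted, so $\bar{p}^{-1}(x, \eta)$ is a single reduced point and $\deg(\bar{p}) = 1$.

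For (i), the universal family $\mathcal{Z}_n \to S^{[n]}$ is finite flat of degree $n$, so $\mathcal{Z}_n$ (and hence ${^t\mathcal{Z}_n}$) has dimension $2n$; it is moreover irreducible and generically reduced, so its cycle class is $[{^t\mathcal{Z}_n}]$ with multiplicity one. For $Z_{n-1,n}$, the open locus where $x \notin {\rm supp}(\xi)$ maps isomorphically onto an open subset of $S^{[n-1]} \times S$ via $(\xi, x, \eta) \mapsto (\xi, x)$ (with $\eta = \xi \sqcup \{x\}$ forced), hence is irreducible of dimension $2n$; the complementary locus lies over ${^t\mathcal{Z}_{n-1}} \subseteq S^{[n-1]} \times S$ with fibres of dimension at most $1$, so it has dimension at most $2n-1$, and therefore $Z_{n-1,n}$ (which is the nested Hilbert scheme $S^{[n-1,n]}$) is irreducible of dimension $2n$. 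The point requiring the most care is the surjectivity of $\bar{p}$ onto \emph{all} of ${^t\mathcal{Z}_n}$, not merely the reduced locus, together with the matching of dimensions, since it is these two facts that guarantee the pushforward equals $[{^t\mathcal{Z}_n}]$ rather than vanishing or being supported on a proper subvariety; both reduce to the elementary existence of length-$1$ sub- and quotient modules of $\mathcal{O}_\eta$, while the generic injectivity is immediate on the reduced locus.
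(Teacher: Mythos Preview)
Your proof is correct and follows essentially the same strategy as the paper: establish that $Z_{n-1,n}$ is an irreducible subvariety, identify its set-theoretic image under $\bar p$ with ${^t\mathcal{Z}_n}$, and compute the generic fibre over a reduced $\eta$ to conclude $\deg(\bar p)=1$. One small point worth tightening: your dimension count (dense open of dimension $2n$, complement of dimension $\le 2n-1$) does not by itself rule out additional lower-dimensional irreducible components sitting entirely inside the complement; the paper closes this by invoking the irreducibility of the nested Hilbert scheme $S^{[n-1,n]}$ (Cheah/Lehn), which is exactly what your parenthetical identification is pointing to, so you should cite that rather than present the dimension count as a self-contained proof of irreducibility.
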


\begin{proof}
First recall that $Z_{n-1,n}$ is a closed subvariety of the product. This can be seen using the projection map $Z_{n-1,n} \rightarrow S^{[n-1,n]}$ onto the nested Hilbert scheme \cite[Section~1.2]{Lehn2} and the fact that $S^{[n-1,n]}$ is irreducible by \cite[Theorem~1.9]{Lehn2}. Indeed, the projection is proper, hence closed, and it actually is a bijection on closed points. It follows that $Z_{n-1,n}$ is irreducible, hence a closed subvariety as it is endowed with the reduced subscheme structure.\\
Now, to prove the claim, note that the equation is clearly true set-theoretically. As $Z_{n-1,n}$ is a closed subvariety of the product, it suffices to show that the degree of $p_{S \times S^{[n]}}|_{Z_{n-1,n}}$ is~$1$. Considering a general fiber suffices, and if $(x, \eta) \in {^t \mathcal{Z}_n}$ with $\eta$ supported at $n$ distinct points $x, x_2, \ldots, x_n$ then the only preimage in $Z_{n-1,n}$ is $([x_2, \ldots, x_n], x, \eta)$ with multiplicity~$1$.
\end{proof}

\begin{cor}\label{INakajima}
As correspondences in ${\rm CH}^\ast(S^{[2]} \times S^{[2]})$ we have
\begin{equation*}
{^t \mathcal{Z}} \circ \mathcal{Z} = - \mathfrak{q}_1([S]) \circ \mathfrak{q}_{-1}([S])
\end{equation*}
in terms of Nakajima operators.
\end{cor}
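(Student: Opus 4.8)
The plan is to reduce the identity entirely to a single computation, namely the identification of the Nakajima correspondence $\mathfrak{q}_1([S])$ with the transposed universal family ${^t\mathcal{Z}}$; everything else is then formal. Granting this, the transpose rule recalled before the statement gives $\mathfrak{q}_{-1}([S]) = -{^t(\mathfrak{q}_1([S]))} = -{^t({^t\mathcal{Z}})} = -\mathcal{Z}$, and since composition of correspondences is bilinear we obtain
\[
\mathfrak{q}_1([S]) \circ \mathfrak{q}_{-1}([S]) = {^t\mathcal{Z}} \circ (-\mathcal{Z}) = -\,({^t\mathcal{Z}} \circ \mathcal{Z}),
\]
which is exactly the asserted equality after moving the sign to the other side. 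So the entire content lies in identifying $\mathfrak{q}_1([S])$ as a correspondence.

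For that I would first unwind the definition of the Nakajima operator: evaluating $\mathfrak{q}_1$ on the fundamental class $[S]$, which is the unit of ${\rm CH}^\ast(S)$, amounts to capping $Z_{1,2} \subseteq S^{[1]} \times S \times S^{[2]}$ with $1$ along the middle factor, so by the projection formula the correspondence $\mathfrak{q}_1([S]) \in {\rm CH}^\ast(S^{[1]} \times S^{[2]})$ is the pushforward of $Z_{1,2}$ under the projection $p_{13}$ that \emph{forgets the middle} copy of $S$. Lemma~\ref{projZ} (with $n=2$) instead computes the pushforward under the projection $p_{S \times S^{[2]}}$ that \emph{forgets the first} factor $S^{[1]}$, and identifies it with ${^t\mathcal{Z}}$. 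To reconcile the two projections I would use the involution $\sigma$ of $S^{[1]} \times S \times S^{[2]} = S \times S \times S^{[2]}$ exchanging the two copies of $S$. On the dense open locus of $Z_{1,2}$ where $\eta$ is reduced with two distinct support points, $\sigma$ simply swaps the point $\xi$ and the moving point $x$, so it preserves this locus; since $Z_{1,2}$ is a reduced irreducible subvariety (as established in the proof of Lemma~\ref{projZ}), it follows that $\sigma_\ast Z_{1,2} = Z_{1,2}$. Because $p_{13} = p_{S \times S^{[2]}} \circ \sigma$, this yields $p_{13,\ast}(Z_{1,2}) = p_{S\times S^{[2]},\ast}(\sigma_\ast Z_{1,2}) = {^t\mathcal{Z}}$ under the identification $S^{[1]} = S$, as desired.

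The main thing to watch is bookkeeping rather than geometry: one must track the direction of each correspondence and the sign $(-1)^i$ in the transpose relation for $\mathfrak{q}_{-i}$, and verify that the symmetry $\sigma$ genuinely fixes the cycle class $[Z_{1,2}]$ and not merely its support. If one prefers to avoid invoking $\sigma$, the identification $p_{13,\ast}(Z_{1,2}) = {^t\mathcal{Z}}$ can instead be proved directly by the method of Lemma~\ref{projZ}: the equality holds set-theoretically since $p_{13}$ sends $(\xi,x,\eta)$ to $(\xi,\eta)$ with $\xi \in \eta$, and the generic fibre over a reduced $\eta$ with support $\{\xi,x\}$ is the single point $(\xi,x,\eta)$, so the projection has degree $1$ onto its image ${^t\mathcal{Z}}$.
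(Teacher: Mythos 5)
Your proof is correct and follows essentially the same route as the paper: identify $\mathfrak{q}_1([S])$ with ${^t\mathcal{Z}}$ via Lemma~\ref{projZ} and the Hilbert--Chow identification $S^{[1]} = S$, then conclude by the transpose rule $\mathfrak{q}_{-1}([S]) = -{^t(\mathfrak{q}_1([S]))}$ and bilinearity of composition. If anything you are more careful than the paper, whose proof invokes Lemma~\ref{projZ} even though the projection defining $\mathfrak{q}_1([S])$ forgets the middle factor $S$ rather than the factor $S^{[1]}$; your symmetry argument for $Z_{1,2}$ (or the alternative direct degree-$1$ computation) is precisely the implicit step needed to reconcile the two pushforwards.
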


\begin{proof}
This follows from the fact that the Hilbert--Chow morphism $S^{[1]} \rightarrow S$, sending a length~$1$ subscheme to its support point, is an isomorphism, and from the definition of $\mathfrak{q}_{\pm 1}([S])$ as correspondences in ${\rm CH}^\ast(S^{[2]} \times S^{[1]})$. Indeed,
\begin{align*}
\mathfrak{q}_1([S]) &= (p_{S^{[1]} \times S^{[2]}})_\ast \left( Z_{1,2} \right), \\
\mathfrak{q}_{-1}([S]) &= (-1) \cdot (p_{S^{[1]} \times S^{[2]}})_\ast \left( Z_{1,2} \right).
\end{align*}
The operator $\mathfrak{q}_{-1}([S])$ is then a correspondence from $S^{[2]}$ to $S^{[1]}$ while $\mathfrak{q}_1([S])$ is a correspondence in the opposite direction. Now, using $S^{[1]} = S$ by the Hilbert--Chow morphism, Lemma~\ref{projZ} for $n=2$ gives the desired result.
\end{proof}

\subsection{Computations.}\label{comp}
Let $S$ be a projective $K3$~surface and $\Delta_S \subseteq S \times S$ the diagonal, $\Delta_{123} \subseteq S \times S \times S$ the small diagonal. By a result of Beauville and Voisin \cite{BV} we have the following relations in the Chow ring of $S$, $S \times S$ and $S \times S \times S$ for every divisor class~$a$:
\begin{gather}\label{Identities}
c_2(T_S) = 24c, \\
\Delta_S c_1 = \Delta_S c_2 = c_1 c_2, \\
\Delta_S^2 = 24 c_1 c_2, \\
\Delta_S a_1 = a_1 c_2 + c_1 a_2, \\
\Delta_{123} = \Delta_{12} c_3 + \Delta_{13} c_2 + \Delta_{23} c_1 - c_1 c_2 - c_1 c_3 - c_2 c_3.
\end{gather}
The third equation is a consequence of the first one and the self-intersection formula. Now we show that the canonical lifts $F_a$ of the Lefschetz dual $f_a$ and $H$ of the grading operator $h$ agree with Oberdieck's canonical lifts in~\cite{O}. Recall the notation set up in Subsection~\ref{ExplicitLiftHilb}.

\begin{prop}\label{mult}
Let $S$ be a projective $K3$~surface and $F = S^{[2]}$. We have the following expressions for correspondences in terms of Nakajima operators:
\begin{align}
[F \times F] &= \frac{1}{4} \mathfrak{q}_1 \mathfrak{q}_1 \mathfrak{q}_{-1} \mathfrak{q}_{-1} ([S^4]), \\
{\rm mult}_{S_c} &= - \mathfrak{q}_1 \mathfrak{q}_{-1} (c_1 c_2) - \mathfrak{q}_2 \mathfrak{q}_{-2} (c_1 c_2), \\
{\rm mult}_{\delta^2} &= 12 \mathfrak{q}_2 \mathfrak{q}_{-2} (c_1 c_2) - \frac{1}{2} \mathfrak{q}_1 \mathfrak{q}_1 \mathfrak{q}_{-1} \mathfrak{q}_{-1} (\Delta_{1234}), \\
{\rm mult}_l &= -20 \mathfrak{q}_1 \mathfrak{q}_{-1} (c_1 c_2) - 50 \mathfrak{q}_2 \mathfrak{q}_{-2} (c_1 c_2) + \frac{5}{4} \mathfrak{q}_1 \mathfrak{q}_1 \mathfrak{q}_{-1} \mathfrak{q}_{-1}(\Delta_{1234}).
\end{align}
Of course, as correspondences we interpret ${\rm mult}_Z$ to be $(\Delta_F)_\ast(Z)$.
\end{prop}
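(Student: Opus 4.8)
The plan is to handle the four identities by passing to the Fock space $\bigoplus_n {\rm CH}^\ast(S^{[n]})$, expressing each geometric class as a Nakajima state and then reading off the induced self-correspondence of $S^{[2]}$. I would first dispatch the formula for ${\rm mult}_l$, which is purely formal: the assignment $Z \mapsto {\rm mult}_Z = (\Delta_F)_\ast(Z)$ is $\mathbb{Q}$-linear, and $l = 20\,S_c - \tfrac52\delta^2$ by \eqref{lHilb}, so ${\rm mult}_l = 20\,{\rm mult}_{S_c} - \tfrac52\,{\rm mult}_{\delta^2}$. Substituting the two preceding formulas produces the $\mathfrak{q}_2\mathfrak{q}_{-2}(c_1c_2)$-coefficient $-20-30 = -50$ and the $\mathfrak{q}_1\mathfrak{q}_1\mathfrak{q}_{-1}\mathfrak{q}_{-1}(\Delta_{1234})$-coefficient $\tfrac54$, as claimed. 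It remains to prove the first three identities.

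For $[F\times F]$ the point is that, as a correspondence, it acts on ${\rm CH}^\ast(F)$ by $Z \mapsto (\int_F Z)\,[F]$, i.e.\ it is a rank-one operator that first integrates and then creates the fundamental class. Both operations are encoded by the standard identity $[S^{[2]}] = \tfrac12\,\mathfrak{q}_1([S])^2$ applied to the vacuum $1 \in {\rm CH}^\ast(S^{[0]})$: the creation half contributes $\tfrac12\,\mathfrak{q}_1([S])^2$, while the integration functional $\int_F$ is, up to the adjunction $\mathfrak{q}_{-1} = -{}^t\mathfrak{q}_1$ and the Nakajima commutation relations, implemented by $\tfrac12\,\mathfrak{q}_{-1}([S])^2$ (the two sign changes cancelling). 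Composing yields $\tfrac14\,\mathfrak{q}_1\mathfrak{q}_1\mathfrak{q}_{-1}\mathfrak{q}_{-1}$ with each of the four legs carrying $[S]$, and since $[S]\otimes[S]\otimes[S]\otimes[S] = [S^4]$ this is exactly $\tfrac14\,\mathfrak{q}_1\mathfrak{q}_1\mathfrak{q}_{-1}\mathfrak{q}_{-1}([S^4])$; the coefficient $\tfrac14 = \tfrac12\cdot\tfrac12$ records the two symmetrizations.

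For ${\rm mult}_{S_c}$ and ${\rm mult}_{\delta^2}$ --- the substance of the proposition --- I would first describe the two classes as Nakajima states: $S_c = \mathfrak{q}_1(c)\mathfrak{q}_1([S])$ applied to the vacuum (one support point pinned at the canonical $0$-cycle $c$, the other free), which follows from $S_c = p_\ast q^\ast(c)$ together with the dictionary between the universal family $\mathcal{Z}$ and $\mathfrak{q}_{\pm 1}$ furnished by Lemma~\ref{projZ} and the proof of Corollary~\ref{INakajima}; and $\delta$ as the half-boundary class carried by $\mathfrak{q}_2([S])$. Multiplication by such a state is then computed as an operator on Fock space by the calculus of boundary and tautological operators (Lehn, Li--Qin--Wang), which for $S^{[2]}$ reduces to a finite commutator computation: pushing the extra creation operators of $S_c$, respectively $\delta^2$, past the generators of an arbitrary state and normal-ordering splits each multiplication operator into balanced products. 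Geometrically the summands reflect the two Hilbert--Chow strata of $S^{[2]}$ --- reduced pairs give the $\mathfrak{q}_1\mathfrak{q}_{-1}$ pieces and subschemes concentrated at one point give the $\mathfrak{q}_2\mathfrak{q}_{-2}$ pieces --- while the small diagonal $\Delta_{1234}$ on $S^4$ and the canonical point $c_1c_2$ on $S\times S$ enter through the Beauville--Voisin relations \eqref{Identities}, notably $\Delta_S c_1 = c_1 c_2$ and $\Delta_S^2 = 24\,c_1c_2$.

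The main obstacle will be pinning down the exact rational coefficients in ${\rm mult}_{\delta^2}$, that is, separating the contribution of the non-reduced locus (the $\mathfrak{q}_2\mathfrak{q}_{-2}(c_1c_2)$ term with coefficient $12$) from that of the generic stratum (the four-leg term $-\tfrac12\,\mathfrak{q}_1\mathfrak{q}_1\mathfrak{q}_{-1}\mathfrak{q}_{-1}(\Delta_{1234})$). This is precisely where the numerical input $24 = \deg c_2(T_S)$ from \eqref{Identities} forces the factor $12$, and where a stray sign or factor of two is easiest to introduce; once the Fock-space dictionary and the commutation relations are set up, everything else is bookkeeping.
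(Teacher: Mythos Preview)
Your outline is correct and aligns with the paper's argument: the identity for $[F\times F]$ via $[F]=\tfrac12\mathfrak{q}_1([S])^2\cdot 1_{S^{[0]}}$ and adjunction (Corollary~\ref{GeorgsTrick}), the reduction of ${\rm mult}_l$ to the other two via \eqref{lHilb}, and the identification of the Beauville--Voisin input $\Delta_S^2=24\,c_1c_2$ as the source of the coefficient $12$ all match.

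The paper is, however, more economical on the two substantive cases. For ${\rm mult}_{S_c}$ it does not pass through the state $\mathfrak{q}_1(c)\mathfrak{q}_1([S])\cdot 1_{S^{[0]}}$ and a commutator computation; instead it recognizes $S_c=\pi_\ast({\rm ch}_2(\mathcal{O}_{\mathcal Z})\rho^\ast(c))$ as a tautological (Chern character) class and invokes \cite[Theorem~1.6]{MN}, which gives the multiplication operator by such a class in closed form $-\sum_{n\ge 1}\mathfrak{q}_n\mathfrak{q}_{-n}(c_1c_2)$. For ${\rm mult}_{\delta^2}$ the paper writes ${\rm mult}_{\delta^2}=e_\delta\circ e_\delta$ and inserts the known Lehn-type expression $e_\delta=\tfrac12\bigl(\mathfrak{q}_2\mathfrak{q}_{-1}\mathfrak{q}_{-1}(\Delta_{123})+\mathfrak{q}_1\mathfrak{q}_1\mathfrak{q}_{-2}(\Delta_{123})\bigr)$, then expands the square using the Nakajima commutation relations; the cross term $\mathfrak{q}_1\mathfrak{q}_1\mathfrak{q}_{-2}\mathfrak{q}_2\mathfrak{q}_{-1}\mathfrak{q}_{-1}$ collapses to $-2\,\mathfrak{q}_1\mathfrak{q}_1\mathfrak{q}_{-1}\mathfrak{q}_{-1}(\Delta_{1234})$ and the other cross term to $48\,\mathfrak{q}_2\mathfrak{q}_{-2}(c_1c_2)$ via $\Delta_S^2=24\,c_1c_2$ and $\Delta_S c_1=c_1c_2$. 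Your approach of normal-ordering the product of a state against a generic one would ultimately reproduce this, but squaring $e_\delta$ is the cleaner bookkeeping device and removes exactly the risk of ``stray sign or factor of two'' you flag.
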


\begin{proof}
The first of these equations follows immediately from Corollary~\ref{GeorgsTrick} and $[F] = \frac{1}{2} \mathfrak{q}_1([S]) \mathfrak{q}_1([S]) \cdot 1_{S^{[0]}}$. The last equation for ${\rm mult}_l$ of course follows from the two preceeding ones together with~\eqref{lHilb}. For the second equation we observe $S_c = p_\ast q^\ast(c) = \pi_\ast( {\rm ch}_2(\mathcal{O}_\mathcal{Z}) \rho^\ast(c))$, as $\mathcal{Z} = {\rm ch}_2(\mathcal{O}_\mathcal{Z})$ by \cite[eq.~(101)~on~p.~75]{SV}. Then the first formula of \cite[Theorem~1.6]{MN} shows the claim. Finally, for ${\rm mult}_{\delta^2}$ we use the operator $e_\delta$ of multiplication by $\delta$ and its expression in Nakajima operators from \cite{MN} in order to compute
\begin{align*}
{\rm mult}_{\delta^2} = e_\delta \circ e_\delta &= \frac{1}{4} \Big( \mathfrak{q}_2 \mathfrak{q}_{-1} \mathfrak{q}_{-1} (\Delta_{123}) + \mathfrak{q}_1 \mathfrak{q}_1 \mathfrak{q}_{-2} (\Delta_{123}) \Big)^{\circ 2} \\
&= \frac{1}{4} \Big( \mathfrak{q}_2 \mathfrak{q}_{-1} \mathfrak{q}_{-1} \mathfrak{q}_1 \mathfrak{q}_1 \mathfrak{q}_{-2} (\Delta_{123} \Delta_{456}) + \mathfrak{q}_1 \mathfrak{q}_1 \mathfrak{q}_{-2} \mathfrak{q}_2 \mathfrak{q}_{-1} \mathfrak{q}_{-1} (\Delta_{123} \Delta_{456}) \Big).
\end{align*}
Using the commutation relations for Nakajima operators, the second summand inside the brackets equals $-2 \mathfrak{q}_1 \mathfrak{q}_1 \mathfrak{q}_{-1} \mathfrak{q}_{-1} (\Delta_{1234})$. For the first summand, we obtain
\begin{align*}
\mathfrak{q}_2 \mathfrak{q}_{-1} \mathfrak{q}_{-1} \mathfrak{q}_1 \mathfrak{q}_1 \mathfrak{q}_{-2} (\Delta_{123} \Delta_{456}) &= -2 \mathfrak{q}_2 \mathfrak{q}_{-1} \mathfrak{q}_1 \mathfrak{q}_{-2} (\Delta_{1234}) \\
&= 2 \mathfrak{q}_2 \mathfrak{q}_{-2} \Big( (\rho_{14})_\ast(\Delta_{23} \cdot \Delta_{1234}) \Big) \\
&= 48 \mathfrak{q}_2 \mathfrak{q}_{-2} (c_1 c_2),
\end{align*}
where in the last step we used $\Delta_{1234} = \Delta_{12} \Delta_{23} \Delta_{34}$ and $\Delta_S^2 = 24 c_1 c_2$ from~\eqref{Identities} as well as $\Delta_S c_1 = \Delta_S c_2 = c_1 c_2$, concluding the proof.
\end{proof}

\begin{prop}\label{LNakajima}
Let $S$ be a projective $K3$ surface and $F = S^{[2]}$. Then, as correspondences in ${\rm CH}^2(F \times F)$, we can express $L$ in terms of Nakajima operators by
\begin{equation}
L = - \mathfrak{q}_1 \mathfrak{q}_{-1}([S \times S]) - \frac{1}{8} \mathfrak{q}_2 \mathfrak{q}_{-2}([S \times S]) - \mathfrak{q}_1 \mathfrak{q}_1 \mathfrak{q}_{-1} \mathfrak{q}_{-1}(c_1 + c_4).
\end{equation}
\end{prop}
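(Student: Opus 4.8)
The plan is to start from the explicit shape of Markman's lift recorded in Theorem~\ref{LdefHilb}, namely $L = I - 2(S_c)_1 - 2(S_c)_2 - \frac{1}{2}\delta_1\delta_2$, and to rewrite each of its four summands as a single normal-ordered monomial in Nakajima operators. The structural point I would exploit is that every summand is, after translation into the Fock space $\bigoplus_n {\rm CH}^\ast(S^{[n]})$, already in normal order: the incidence term is one creation operator composed with one annihilation operator, while each of the three product terms factors through the point $S^{[0]}$. Consequently, and in contrast with the computations underlying Proposition~\ref{mult}, no Nakajima commutation relations intervene, and the whole proof reduces to identifying the correct Fock-space states of the geometric classes and then summing.

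For the incidence correspondence I would simply quote Corollary~\ref{INakajima}, which gives $I = {}^t\mathcal{Z}\circ\mathcal{Z} = -\mathfrak{q}_1([S])\circ\mathfrak{q}_{-1}([S])$. Since the input $[S\times S]$ factors as $[S]\otimes[S]$, the bivariate notation unwinds to this composition, so that $I = -\mathfrak{q}_1\mathfrak{q}_{-1}([S\times S])$ with no further work.

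The three product terms I would treat via the elementary principle that composing correspondences through the point $S^{[0]}$ is the exterior product: for $\beta,\gamma\in{\rm CH}^\ast(S^{[2]})$ one has $p_1^\ast\gamma\cdot p_2^\ast\beta = |\beta\rangle\circ\langle\gamma|$, where $|\beta\rangle$ denotes the creation correspondence $S^{[0]}\to S^{[2]}$ representing $\beta$ and $\langle\gamma|$ its transpose $S^{[2]}\to S^{[0]}$, obtained through ${}^t\mathfrak{q}_i = (-1)^i\mathfrak{q}_{-i}$. Inserting the Fock states $[F] = \frac{1}{2}\mathfrak{q}_1([S])\mathfrak{q}_1([S])1$ (from the proof of Proposition~\ref{mult}), $S_c = \mathfrak{q}_1(c)\mathfrak{q}_1([S])1$ and $\delta = \pm\frac{1}{2}\mathfrak{q}_2([S])1$, together with $(S_c)_1 = S_c\otimes[F]$, $(S_c)_2 = [F]\otimes S_c$ and $\delta_1\delta_2 = \delta\otimes\delta$, a short bookkeeping of which copy of $S$ carries the $0$-cycle $c$ gives $(S_c)_1 = \frac{1}{2}\mathfrak{q}_1\mathfrak{q}_1\mathfrak{q}_{-1}\mathfrak{q}_{-1}(c_4)$, $(S_c)_2 = \frac{1}{2}\mathfrak{q}_1\mathfrak{q}_1\mathfrak{q}_{-1}\mathfrak{q}_{-1}(c_1)$ and $\delta_1\delta_2 = \frac{1}{4}\mathfrak{q}_2\mathfrak{q}_{-2}([S\times S])$. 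Adding the four contributions reproduces the claimed identity, the coefficient $-\frac{1}{8} = -\frac{1}{2}\cdot(\pm\frac{1}{2})^2$ coming from $\delta_1\delta_2$ and the coefficient $-1 = -2\cdot\frac{1}{2}$ of the four-point term from $-2(S_c)_1-2(S_c)_2$.

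The hard part will not be the assembly but pinning down these Fock-space states. I expect the main obstacle to be verifying that $S_c$ equals $\mathfrak{q}_1(c)\mathfrak{q}_1([S])1$ exactly---coefficient one and, crucially, no boundary (i.e.\ $\mathfrak{q}_2$) correction in codimension two, since it is this that fixes the coefficient $-1$ of the four-point term---and that $\delta$ carries the normalization $\pm\frac{1}{2}\mathfrak{q}_2([S])1$, whose sign is irrelevant as it enters squared but whose magnitude determines $-\frac{1}{8}$. Both should be extractable from the formulas of \cite{MN} for the relevant tautological classes under the conventions of \cite{O}; for $S_c$ the absence of a boundary term follows because the length-two subschemes supported entirely at $c$ form only a codimension-three locus and so cannot contribute to the codimension-two class.
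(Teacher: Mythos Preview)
Your proposal is correct and follows essentially the same route as the paper. The paper also decomposes $L$ via Theorem~\ref{LdefHilb}, quotes Corollary~\ref{INakajima} for $I$, and handles the three product terms by first identifying the Fock states $\delta = \frac{1}{2}\mathfrak{q}_2([S])\cdot 1_{S^{[0]}}$ (via an explicit computation of $e_\delta([F])$) and $S_c = \mathfrak{q}_1\mathfrak{q}_1(c_1)\cdot 1_{S^{[0]}}$ (via Proposition~\ref{mult}), then converting exterior products into normal-ordered compositions; your ``elementary principle that composing correspondences through the point $S^{[0]}$ is the exterior product'' is precisely the content of Lemma~\ref{LemmaGeorgsTrick} and Corollary~\ref{GeorgsTrick}, which the paper invokes at exactly that spot.
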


\begin{proof}
We express each summand of~\eqref{LHilb} in terms of Nakajima operators. For $I$ we have $I = {^t\mathcal{Z}} \circ \mathcal{Z} =  - \mathfrak{q}_1 \mathfrak{q}_{-1}([S \times S])$ by \cite[Lemma~11.2]{SV} and Corollary~\ref{INakajima}. Next,
\begin{align*}
\delta = e_\delta([F]) &= \frac{1}{2} \Big( \mathfrak{q}_2 \mathfrak{q}_{-1} \mathfrak{q}_{-1} (\Delta_{123}) + \mathfrak{q}_1 \mathfrak{q}_1 \mathfrak{q}_{-2} (\Delta_{123}) \Big) \circ \frac{1}{2} \mathfrak{q}_1 \mathfrak{q}_1 ([S \times S]) \cdot 1_{S^{[0]}} \\
&= \frac{1}{2} \mathfrak{q}_2 ([S]) \cdot 1_{S^{[0]}}.
\end{align*}
Together with Corollary~\ref{GeorgsTrick} this gives
\begin{align*}
- \frac{1}{2} \delta_1 \delta_2 &= - \frac{1}{8} \Big( \mathfrak{q}_2([S]) \cdot 1_{S^{[0]}} \Big) \boxtimes \Big( \mathfrak{q}_2([S]) \cdot 1_{S^{[0]}} \Big) \\
&= - \frac{1}{8} \mathfrak{q}_2([S]) \mathfrak{q}_2'([S]) \cdot \Delta_{S^{[0]}} \\
&= - \frac{1}{8} \mathfrak{q}_2 \mathfrak{q}_{-2} ([S \times S]).
\end{align*}
Similarly, using Proposition~\ref{mult}, we get $S_c = {\rm mult}_{S_c}([F]) = \mathfrak{q}_1 \mathfrak{q}_1 (c_1) \cdot 1_{S^{[0]}}$.
Applying again Corollary~\ref{GeorgsTrick} yields
\begin{equation*}
-2(S_c)_1 = - \mathfrak{q}_1 \mathfrak{q}_1 \mathfrak{q}_{-1} \mathfrak{q}_{-1} (c_4) \ \ \text{ and } \ \ -2(S_c)_2 = - \mathfrak{q}_1 \mathfrak{q}_1 \mathfrak{q}_{-1} \mathfrak{q}_{-1} (c_1),
\end{equation*}
and putting everything together gives the claimed formula for $L$.
\end{proof}

On $S^{[2]}$ recall Oberdieck's canonical lifts \cite[eq.~(5)~and~(6)]{O}
\begin{equation}
-2 \sum_{n \geq 1} \frac{1}{n^2} \mathfrak{q}_n \mathfrak{q}_{-n} (a_1 + a_2) = -2 \mathfrak{q}_1 \mathfrak{q}_{-1}(a_1 + a_2) - \frac{1}{2} \mathfrak{q}_2 \mathfrak{q}_{-2}(a_1 + a_2)
\end{equation}
of~$\widetilde{f}_a$ if $a$ is in the surface part ${\rm CH}^1(S) \subseteq {\rm CH}^1(S^{[2]}) = {\rm CH}^1(S) \oplus \mathbb{Q} \delta$, and
\begin{align}
\begin{split}
&- \frac{1}{3} \sum_{i+j+k=0} :\mathfrak{q}_i \mathfrak{q}_j \mathfrak{q}_k (\frac{1}{k^2} \Delta_{12} + \frac{1}{j^2} \Delta_{13} + \frac{1}{i^2} \Delta_{23} + \frac{2}{j \cdot k} c_1 + \frac{2}{i \cdot k} c_2 + \frac{2}{i \cdot j} c_3): \\
&=  2 \mathfrak{q}_2 \mathfrak{q}_{-1} \mathfrak{q}_{-1} \left( c_3 -c_1 - \Delta_{12} - \frac{1}{8} \Delta_{23} \right) + 2\mathfrak{q}_1 \mathfrak{q}_1 \mathfrak{q}_{-2} \left( c_1 - c_3 - \Delta_{23} - \frac{1}{8} \Delta_{12} \right),
\end{split}
\end{align}
of $\widetilde{f}_{\delta}$. Recall also the lift
\begin{equation}
2 \sum_{n \geq 1} \frac{1}{n} \mathfrak{q}_n \mathfrak{q}_{-n} (c_2 - c_1) = 2 \mathfrak{q}_1 \mathfrak{q}_{-1} (c_2 - c_1) + \mathfrak{q}_2 \mathfrak{q}_{-2} (c_2 - c_1).
\end{equation}
of $h$.

\begin{prop}\label{T_a=f_a}
Let $F = S^{[2]}$ and $a \in {\rm CH}^1(F)$. Let $\widetilde{F}_a$ be as in Corollary~\ref{T_a} and $H$ as in Proposition~\ref{H} with respect to the canonical lift $L$ from Theorem~\ref{LdefHilb}. Then both cycles agree with Oberdieck's canonical lifts.
\end{prop}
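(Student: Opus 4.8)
The plan is to translate both cycles entirely into the Nakajima operator calculus and then match coefficients with the formulas from \cite{O} recalled above. Since $\widetilde{F}_a$ and $H$ as well as Oberdieck's lifts are all $\mathbb{Q}$-linear in $a$, and ${\rm CH}^1(S^{[2]}) = {\rm CH}^1(S) \oplus \mathbb{Q}\delta$, it suffices to treat separately the surface case $a \in {\rm CH}^1(S)$ and the case $a = \delta$; for $H$ there is no dependence on $a$ at all. The essential inputs are the Nakajima expression for $L$ from Proposition~\ref{LNakajima}, the multiplication operators ${\rm mult}_{S_c}$, ${\rm mult}_{\delta^2}$, ${\rm mult}_l$ of Proposition~\ref{mult}, and Corollary~\ref{GeorgsTrick}, which converts a pullback $(p_i)^\ast Z$ of a class $Z$ on $F$ into a four-leg Nakajima correspondence.

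The first step is a dictionary turning the products in the definitions of $\widetilde{F}_a$ and $H$ into operator compositions. A direct check with the projection formula gives, for any class $a$,
\[
L \cdot a_1 = L \circ {\rm mult}_a, \qquad L \cdot a_2 = {\rm mult}_a \circ L,
\]
so that $L(a_1+a_2) = L \circ {\rm mult}_a + {\rm mult}_a \circ L$, and likewise $(l_2 - l_1)L = {\rm mult}_l \circ L - L \circ {\rm mult}_l$. The purely pulled-back terms are handled differently: $l_i a_i = (l\,a)_i$ is the pullback of the class $l\,a \in {\rm CH}^3(F)$, and $l_i^2 = (l^2)_i$, both of which Corollary~\ref{GeorgsTrick} expresses as Nakajima correspondences once $l$, $l^2$ and $l\,a$ are written as classes on $F$ via \eqref{lHilb} and Proposition~\ref{mult}. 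For the operators ${\rm mult}_a$ with $a$ in the surface part I would use the standard formula for cup product by a surface divisor in terms of Nakajima operators from \cite{MN}; the operator ${\rm mult}_l$ is supplied by Proposition~\ref{mult}.

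With this dictionary the computation is a bookkeeping exercise in the Heisenberg commutation relations. For the surface part one substitutes $L = -\mathfrak{q}_1\mathfrak{q}_{-1}([S\times S]) - \tfrac{1}{8}\mathfrak{q}_2\mathfrak{q}_{-2}([S\times S]) - \mathfrak{q}_1\mathfrak{q}_1\mathfrak{q}_{-1}\mathfrak{q}_{-1}(c_1+c_4)$ into $2L(a_1+a_2)$. The two diagonal summands produce exactly the clean two-leg terms $-2\mathfrak{q}_1\mathfrak{q}_{-1}(a_1+a_2)$ and $-\tfrac{1}{2}\mathfrak{q}_2\mathfrak{q}_{-2}(a_1+a_2)$ of Oberdieck's lift of $\widetilde{f}_a$, the factor $2$ in the weight-$2$ term arising because multiplication by a surface divisor acts with multiplicity $n$ on the image of $\mathfrak{q}_n$. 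The four-leg contribution coming from the $\mathfrak{q}_1\mathfrak{q}_1\mathfrak{q}_{-1}\mathfrak{q}_{-1}(c_1+c_4)$ summand must then be cancelled by the correction term $\tfrac{4}{25}(l_1 a_1 + l_2 a_2)$; making this cancellation explicit is the real content of the surface-part case. The class $H$ is treated analogously: one computes the commutator ${\rm mult}_l \circ L - L \circ {\rm mult}_l$, whose two-leg part yields $2\mathfrak{q}_1\mathfrak{q}_{-1}(c_2-c_1) + \mathfrak{q}_2\mathfrak{q}_{-2}(c_2-c_1)$, while the boundary term $\tfrac{4}{575}(l_2^2 - l_1^2)$ absorbs the contributions living in the extreme degrees ${\rm CH}^0$ and ${\rm CH}^4$.

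The main obstacle is the case $a=\delta$, where Oberdieck's lift of $\widetilde{f}_\delta$ is the genuinely cubic expression recalled above. Here $\delta = \tfrac{1}{2}\mathfrak{q}_2([S]) \cdot 1_{S^{[0]}}$ forces the appearance of $\mathfrak{q}_2\mathfrak{q}_{-1}\mathfrak{q}_{-1}$- and $\mathfrak{q}_1\mathfrak{q}_1\mathfrak{q}_{-2}$-type terms, and one must carry out the compositions $L \circ {\rm mult}_\delta$, ${\rm mult}_\delta \circ L$ together with the correction $\tfrac{4}{25}(l_1\delta_1 + l_2\delta_2)$ while tracking every commutator coming from $[\mathfrak{q}_m, \mathfrak{q}_{-n}]$. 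This is where the computation is longest and most error-prone, since several four- and five-leg terms must conspire to cancel, leaving precisely the coefficients and the classes $c_1-c_3$, $\Delta_{12}$, $\Delta_{23}$ appearing in Oberdieck's formula; the correction terms proportional to $l$ are exactly what makes this cancellation possible, mirroring the role they already play in the surface part.
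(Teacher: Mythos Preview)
Your outline is correct and follows essentially the same route as the paper: split into the surface part and $a=\delta$, convert $L\cdot a_i$ and $(l_2-l_1)L$ into compositions with ${\rm mult}_a$ and ${\rm mult}_l$, plug in the Nakajima expressions from Propositions~\ref{mult} and~\ref{LNakajima}, and match against Oberdieck's formulas after the four-leg correction terms cancel with $\tfrac{4}{25}(l_1a_1+l_2a_2)$. The only packaging difference is that the paper computes $l_1a_1$ directly as the composition $[F\times F]\circ{\rm mult}_l\circ{\rm mult}_a$ rather than first forming $l\cdot a\in{\rm CH}^3(F)$ and then applying Corollary~\ref{GeorgsTrick}; these are equivalent, and the paper's version is slightly more streamlined since it avoids computing $l\cdot a$ as an intermediate class.
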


\begin{proof}
First, let $a$ be in the surface part ${\rm CH}^1(S) \subseteq {\rm CH}^1(S^{[2]})$. We begin by expressing $l_1 a_1$ in terms of Nakajima operators, $l_2 a_2$ being its transpose. We use
\begin{equation*}
l_1 a_1 = [F \times F] \circ {\rm mult}_l \circ {\rm mult}_a
\end{equation*}
as correspondences and the formulas from Proposition~\ref{mult} as well as the formula for $e_a = {\rm mult}_a$ from~\cite[eq.~(4)]{O}. We have
\begin{align}\label{l_1a_1}
\begin{split}
l_1 a_1 &= \mathfrak{q}_1 \mathfrak{q}_1 \mathfrak{q}_{-1} \mathfrak{q}_{-1}([S^4]) \ \circ \\
&\circ \left( 5 \mathfrak{q}_1 \mathfrak{q}_{-1}(c_1 c_2) - \frac{5}{16}  \mathfrak{q}_1 \mathfrak{q}_1 \mathfrak{q}_{-1} \mathfrak{q}_{-1}(\Delta_{1234}) \right) \circ  \mathfrak{q}_1 \mathfrak{q}_{-1}(\Delta_\ast(a)) \\
&= \mathfrak{q}_1 \mathfrak{q}_1 \mathfrak{q}_{-1} \mathfrak{q}_{-1}([S^4]) \circ \left( 5 \mathfrak{q}_1 \mathfrak{q}_1 \mathfrak{q}_{-1} \mathfrak{q}_{-1}(\Delta_{14} a_1 c_2 c_3) + \frac{5}{8} \mathfrak{q}_1 \mathfrak{q}_1 \mathfrak{q}_{-1} \mathfrak{q}_{-1}(\Delta_{1234} a_1) \right).
\end{split}
\end{align}
For the first summand in \eqref{l_1a_1}, we get
\begin{equation*}
5 \mathfrak{q}_1 \mathfrak{q}_1 \mathfrak{q}_{-1} \mathfrak{q}_1 \mathfrak{q}_{-1} \mathfrak{q}_1 \mathfrak{q}_{-1} \mathfrak{q}_{-1}(\Delta_{48} a_8 c_6 c_7) - 5 \mathfrak{q}_1 \mathfrak{q}_1 \mathfrak{q}_{-1} \mathfrak{q}_1 \mathfrak{q}_{-1} \mathfrak{q}_{-1}(c_4 c_5 a_6) = 10 \mathfrak{q}_1 \mathfrak{q}_1 \mathfrak{q}_{-1} \mathfrak{q}_{-1} (c_3 a_4).
\end{equation*}
The second summand in \eqref{l_1a_1} equals
\begin{equation*}
\frac{5}{8} \mathfrak{q}_1 \mathfrak{q}_1 \mathfrak{q}_{-1} \mathfrak{q}_1 \mathfrak{q}_{-1} \mathfrak{q}_1 \mathfrak{q}_{-1} \mathfrak{q}_{-1} (\Delta_{4678} a_8) - \frac{5}{8} \mathfrak{q}_1 \mathfrak{q}_1 \mathfrak{q}_{-1} \mathfrak{q}_1 \mathfrak{q}_{-1} \mathfrak{q}_{-1} (\Delta_{456} a_6) = \frac{5}{2} \mathfrak{q}_1 \mathfrak{q}_1 \mathfrak{q}_{-1} \mathfrak{q}_{-1} (c_3 a_4),
\end{equation*}
where we used $\Delta_{34} a_3 = a_3 c_4 + c_3 a_4$ (see~\cite{BV}). Hence, $l_1 a_1 = \frac{25}{2} \mathfrak{q}_1 \mathfrak{q}_1 \mathfrak{q}_{-1} \mathfrak{q}_{-1} (c_3 a_4)$, and therefore
\begin{equation*}
\frac{4}{25} (l_1 a_1 + l_2 a_2) = 2 \mathfrak{q}_1 \mathfrak{q}_1 \mathfrak{q}_{-1} \mathfrak{q}_{-1} (a_1 c_2 + c_3 a_4).
\end{equation*}
We now calculate $L a_1 = (-L) \circ (-e_a)$ employing the formula for $L$ from Proposition~\ref{LNakajima}. We have
\begin{align*}
L a_1 &= \left( \mathfrak{q}_1 \mathfrak{q}_{-1}([S \times S]) + \frac{1}{8} \mathfrak{q}_2 \mathfrak{q}_{-2}([S \times S]) + \mathfrak{q}_1 \mathfrak{q}_1 \mathfrak{q}_{-1} \mathfrak{q}_{-1}(c_1 + c_4) \right) \\
&\circ \Big( \mathfrak{q}_1 \mathfrak{q}_{-1}(\Delta_\ast(a)) + \mathfrak{q}_2 \mathfrak{q}_{-2} (\Delta_\ast(a)) \Big) \\
&= \mathfrak{q}_1 \mathfrak{q}_{-1} \mathfrak{q}_1 \mathfrak{q}_{-1} (\Delta_{34} a_4) + \frac{1}{8} \mathfrak{q}_2 \mathfrak{q}_{-2} \mathfrak{q}_2 \mathfrak{q}_{-2} (\Delta_{34} a_4) + \mathfrak{q}_1 \mathfrak{q}_1 \mathfrak{q}_{-1} \mathfrak{q}_{-1} \mathfrak{q}_1 \mathfrak{q}_{-1} ((c_1 + c_4) \Delta_{56} a_6) \\
&= - \mathfrak{q}_1 \mathfrak{q}_{-1} (a_2) - \frac{1}{4} \mathfrak{q}_2 \mathfrak{q}_{-2} (a_2) + \mathfrak{q}_1 \mathfrak{q}_1 \mathfrak{q}_{-1} \mathfrak{q}_{-1} (a_1 c_4 - c_1 a_4 - c_3 a_4).
\end{align*}
After adding $L a_2$, clearly $\mathfrak{q}_1 \mathfrak{q}_1 \mathfrak{q}_{-1} \mathfrak{q}_{-1} (a_1 c_4 - c_1 a_4)$ cancels out. We obtain
\begin{equation*}
2L(a_1 + a_2) = - 2 \mathfrak{q}_1 \mathfrak{q}_{-1} (a_1 + a_2) - \frac{1}{2} \mathfrak{q}_2 \mathfrak{q}_{-2} (a_1 + a_2) - 2 \mathfrak{q}_1 \mathfrak{q}_1 \mathfrak{q}_{-1} \mathfrak{q}_{-1} (a_1 c_2 + c_3 a_4),
\end{equation*}
and putting both pieces together yields the desired formula. Let now $a = \delta$. Using the formula for $e_\delta$ from~\cite[eq.~(4)]{O}, we can analogously compute
\begin{equation*}
l_1 \delta_1 = [F \times F] \circ {\rm mult}_l \circ e_\delta = - 25 \mathfrak{q}_1 \mathfrak{q}_1 \mathfrak{q}_{-2} (c_3),
\end{equation*}
so that
\begin{equation}\label{ld}
\frac{4}{25} (l_1 \delta_1 + l_2 \delta_2) = -4 \mathfrak{q}_1 \mathfrak{q}_1 \mathfrak{q}_{-2} (c_3) - 4 \mathfrak{q}_2 \mathfrak{q}_{-1} \mathfrak{q}_{-1} (c_1).
\end{equation}
Using the formula for $L$, we now consider $2 L \delta_1 = 2 (-L) \circ (- e_\delta)$ and get
\begin{equation*}
2 L \delta_1 = -2 \mathfrak{q}_1 \mathfrak{q}_1 \mathfrak{q}_{-2} (\Delta_{23}) - \frac{1}{4} \mathfrak{q}_2 \mathfrak{q}_{-1} \mathfrak{q}_{-1} (\Delta_{23}) + 2 \mathfrak{q}_1 \mathfrak{q}_1 \mathfrak{q}_{-2} (c_1 + c_3),
\end{equation*}
where we applied the Nakajima commutation relations several times. Therefore,
\begin{align}\label{Ld}
\begin{split}
2L(\delta_1 + \delta_2) &= \mathfrak{q}_1 \mathfrak{q}_1 \mathfrak{q}_{-2} \left( 2(c_1 + c_3) - 2 \Delta_{23} - \frac{1}{4} \Delta_{12} \right) \\
&+ \mathfrak{q}_2 \mathfrak{q}_{-1} \mathfrak{q}_{-1} \left( 2(c_1 + c_3) - 2 \Delta_{12} - \frac{1}{4} \Delta_{23} \right).
\end{split}
\end{align}
Putting equations~\eqref{ld} and~\eqref{Ld} together yields the claimed formula for~$\widetilde{F}_\delta$. The proof for $H$ is very similar and uses the decomposition of the small diagonal in~\eqref{Identities} as well as the equation $\mathfrak{q}_1 \mathfrak{q}_{-1} (\Delta_S) = - {\rm id}_{S^{[1]}}$ as correspondences on $S^{[1]} \times S^{[1]}$.
\end{proof}

\begin{rmk}\label{lastRmk}
Proposition~\ref{T_a=f_a} together with the main theorem of \cite{O} shows the equation $(a,a)H = \widetilde{H}_a$ (see Remark~\ref{conjRelRmk}) for all divisor classes $a$ in the Hilbert scheme case and so yields further evidence that this might be true for hyperkähler varieties $X$ of $K3^{[2]}$-type in general. It also gives another proof for Conjecture~\ref{multiplicativityConj} for Hilbert schemes of two points of $K3$~surfaces by the more general \cite[Theorem~1.4]{NOY}.
\end{rmk}

\appendix
\section{An auxiliary lemma}\label{app}

\begin{lem}\label{LemmaGeorgsTrick}
Let $X$, $Y$ and $Z$ be smooth projective varieties and $\Gamma \in {\rm CH}^\ast(X \times Y)$, $\widetilde{\Gamma} \in {\rm CH}^\ast(X \times Z)$ correspondences. Then as correspondences in ${\rm CH}^\ast(Y \times Z)$ we have
\begin{equation*}
(\Gamma \times \widetilde{\Gamma})_\ast(\Delta_X) = \widetilde{\Gamma} \circ {^t \Gamma}.
\end{equation*}
\end{lem}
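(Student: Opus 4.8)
The plan is to unwind both sides into a single expression on the triple product $X \times Y \times Z$ and check that they literally coincide. Throughout I regard the external product $\Gamma \times \widetilde{\Gamma} \in {\rm CH}^\ast(X \times Y \times X \times Z)$ as a correspondence from the pair of $X$-factors to $Y \times Z$. Writing $W := X \times Y \times X \times Z$ and letting $q \colon W \to X \times X$ and $p \colon W \to Y \times Z$ be the projections onto the two $X$-factors and onto $Y \times Z$ respectively, the definition of the induced action gives $(\Gamma \times \widetilde{\Gamma})_\ast(\Delta_X) = p_\ast\big( (\Gamma \times \widetilde{\Gamma}) \cdot q^\ast(\Delta_X) \big)$. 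The first task is to fix explicit names for all projection maps so that no factor is ever mislabeled.

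Next I would reinterpret the factor $q^\ast(\Delta_X)$. Since $q$ is a flat projection, $q^\ast[\Delta_X]$ is the fundamental class of the partial diagonal $q^{-1}(\Delta_X) = \{(x,y,x,z)\} \subseteq W$, which is the image of the regular closed embedding $\iota \colon X \times Y \times Z \hookrightarrow W$, $(x,y,z) \mapsto (x,y,x,z)$. Thus $q^\ast(\Delta_X) = \iota_\ast(1)$, and the projection formula converts the intersection into a pushforward, $(\Gamma \times \widetilde{\Gamma}) \cdot q^\ast(\Delta_X) = \iota_\ast\big( \iota^\ast(\Gamma \times \widetilde{\Gamma}) \big)$. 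Now $\Gamma \times \widetilde{\Gamma}$ is by definition $\pi_{XY}^\ast \Gamma \cdot \pi_{XZ}^\ast \widetilde{\Gamma}$ for the relevant projections $\pi$ from $W$, and because $\iota$ followed by these projections equals ${\rm pr}_{XY}$ and ${\rm pr}_{XZ}$ on $X \times Y \times Z$, functoriality of pullback yields $\iota^\ast(\Gamma \times \widetilde{\Gamma}) = {\rm pr}_{XY}^\ast \Gamma \cdot {\rm pr}_{XZ}^\ast \widetilde{\Gamma}$. Since $p \circ \iota = {\rm pr}_{YZ}$, combining everything through $p_\ast \iota_\ast = ({\rm pr}_{YZ})_\ast$ gives
\[
(\Gamma \times \widetilde{\Gamma})_\ast(\Delta_X) = ({\rm pr}_{YZ})_\ast\big( {\rm pr}_{XY}^\ast \Gamma \cdot {\rm pr}_{XZ}^\ast \widetilde{\Gamma} \big).
\]

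Finally I would expand the right-hand side by the definition of composition of correspondences. Writing ${}^t\Gamma = \sigma^\ast \Gamma$ for the swap $\sigma$ and unwinding $\widetilde{\Gamma} \circ {}^t\Gamma$ on the triple product $Y \times X \times Z$ with middle factor $X$, the standard formula reads $({\rm pr}_{YZ})_\ast\big( {\rm pr}_{YX}^\ast({}^t\Gamma) \cdot {\rm pr}_{XZ}^\ast \widetilde{\Gamma} \big)$. Transporting this along the isomorphism $Y \times X \times Z \cong X \times Y \times Z$ given by the transposition of the first two factors, the map ${\rm pr}_{YX}^\ast \circ \sigma^\ast$ becomes pullback of $\Gamma$ along $(x,y,z) \mapsto (x,y)$, i.e.\ ${\rm pr}_{XY}^\ast$, while the other two projections are matched identically. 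This reproduces verbatim the expression obtained in the previous paragraph, so the two sides agree.

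The only place with genuine content is the bookkeeping: one must be consistent about viewing the external product as a correspondence from $X \times X$ to $Y \times Z$ and match the permutation of factors implicit in the transpose ${}^t\Gamma$ against the one implicit in the composition formula. The geometric input — the identity $q^\ast(\Delta_X) = \iota_\ast(1)$ together with the projection formula — is routine, and the cohomological degrees play no role; the entire risk lies in mislabeling projections, which is why I would pin down all maps explicitly before starting the computation.
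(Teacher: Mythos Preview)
Your proof is correct and follows essentially the same route as the paper's: both reduce the identity to an equality on the triple product $X \times Y \times Z$ by recognizing the pulled-back diagonal as the pushforward of $1$ along the partial diagonal embedding and then applying the projection formula, after which the remaining work is the bookkeeping of matching projections and the swap implicit in ${}^t\Gamma$. The only cosmetic difference is the direction of the argument---the paper starts from the composition side and pushes it into the fourfold product, whereas you start from $(\Gamma \times \widetilde{\Gamma})_\ast(\Delta_X)$ and pull it down to the triple product---but the substantive step is identical.
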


\begin{proof}
Writing out the definitions of both sides, we see that we only need to show
\begin{equation}\label{toshow}
p_{X_1 Y}^\ast( \Gamma ) \cdot p_{X_2 Z}^\ast( \widetilde{\Gamma} ) \cdot p_{X_1 X_2}^\ast(\Delta_X) = ({\rm id}_Y \times \Delta_X \times {\rm id}_Z)_\ast \left( p_{YX}^\ast({^t \Gamma}) \cdot p_{XZ}^\ast(\widetilde{\Gamma}) \right).
\end{equation}
In here, $p_{YX}$ and $p_{XZ}$ are the projections from the triple product $Y \times X \times Z$ to $Y \times X$ and $X \times Z$, respectively. The projections $p_{X_1 Y}$, $p_{X_2 Z}$ and $p_{X_1 X_2}$ are the projections from the product $X \times Y \times X \times Z$ to the factors indicated by the indices. Moreover, $\Delta_X$ denotes both the diagonal embedding $X \hookrightarrow X \times X$ and the cycle class of its image. We can now rewrite the argument of the right hand side of~\eqref{toshow} as
\begin{align*}
p_{YX}^\ast({^t \Gamma}) \cdot p_{XZ}^\ast(\widetilde{\Gamma}) &= ({\rm id}_Y \times \Delta_X \times {\rm id}_Z)^\ast \left( p_{Y X_1}^\ast({^t \Gamma}) \right) \cdot ({\rm id}_Y \times \Delta_X \times {\rm id}_Z)^\ast \left( p_{X_2 Z}^\ast(\widetilde{\Gamma}) \right) \\
&= ({\rm id}_Y \times \Delta_X \times {\rm id}_Z)^\ast \left( p_{Y X_1}^\ast({^t \Gamma}) \cdot p_{X_2 Z}^\ast(\widetilde{\Gamma}) \right).
\end{align*}
Hence, by the projection formula, the entire right hand side of~\eqref{toshow} equals
\begin{align*}
({\rm id}_Y \times \Delta_X \times {\rm id}_Z)_\ast \left( p_{YX}^\ast({^t \Gamma}) \cdot p_{XZ}^\ast(\widetilde{\Gamma}) \right) &= p_{Y X_1}^\ast({^t \Gamma}) \cdot p_{X_2 Z}^\ast(\widetilde{\Gamma}) \cdot p_{X_1 X_2}^\ast(\Delta_X) \\
&= p_{X_1 Y}^\ast(\Gamma) \cdot p_{X_2 Z}^\ast(\widetilde{\Gamma}) \cdot p_{X_1 X_2}^\ast(\Delta_X).
\end{align*}
\end{proof}

This seemingly trivial result has an interesting consequence in the case $X = {\rm Spec}(\mathbb{C})$ where $\Delta_X$ is an isomorphism. The following was pointed out to me by my advisor Georg Oberdieck and uses only the fact that the Nakajima correspondences $\mathfrak{q}_i$ and $\mathfrak{q}_{-i}$ are the transpose of each other up to sign.

\begin{cor}\label{GeorgsTrick}
Let $S$ be a smooth projective surface and recall the definition of the Nakajima operators on Hilbert schemes of points of surfaces. Denote by $\mathfrak{q}_i$ and $\mathfrak{q}_j'$ the operators
\begin{equation*}
\mathfrak{q}_i, \mathfrak{q}_j': \bigoplus_{m,n} {\rm CH}^\ast(S^{[m]} \times S^{[n]}) \rightarrow \bigoplus_{m,n} {\rm CH}^\ast(S^{[m]} \times S^{[n]} \times S)
\end{equation*}
acting as the Nakajima operator $\mathfrak{q}_i$ on the first factor and as the identity on the second factor respectively as $\mathfrak{q}_j$ on the second factor and as the identity on the first factor. Obviously, $\mathfrak{q}_i$ commutes with $\mathfrak{q}_j'$. Let $\Gamma \in {\rm CH}^\ast(S^{k+l})$. Then we have the equation of correspondences
\begin{equation*}
\mathfrak{q}_{i_1} \cdots \mathfrak{q}_{i_k} \mathfrak{q}_{j_1}' \cdots \mathfrak{q}_{j_l}' (\Gamma) \cdot 1_{S^{[0]}} = (-1)^{i_1 + \ldots + i_k} \mathfrak{q}_{j_1} \cdots \mathfrak{q}_{j_l} \mathfrak{q}_{- i_k} \cdots \mathfrak{q}_{-i_1} ( \tau_\ast(\Gamma) ),
\end{equation*}
where $\tau: S^{k+l} \rightarrow S^{k+l}$ permutes the factors according to the permutation of the indices.
\end{cor}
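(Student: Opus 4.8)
The plan is to deduce the corollary directly from Lemma~\ref{LemmaGeorgsTrick}, applied in the degenerate case $X = S^{[0]} = {\rm Spec}(\mathbb{C})$, for which the diagonal $\Delta_X$ is nothing but the fundamental class $1_{S^{[0]}}$ of a point. The conceptual content of the statement is that an \emph{external product} of two correspondences, each obtained by letting a string of creation operators act on $1_{S^{[0]}}$, coincides with a \emph{composition} of correspondences, and this is precisely what Lemma~\ref{LemmaGeorgsTrick} delivers once $X$ is a point. The only genuinely geometric input beyond the lemma will be the transposition rule $\mathfrak{q}_{-i} = (-1)^i \, {^t\mathfrak{q}_i}$.

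First I would package the two blocks of operators as bare operator-correspondences, before any input class is contracted in. Let $\Phi := \mathfrak{q}_{i_1}\cdots\mathfrak{q}_{i_k}$, viewed as a correspondence issuing from $S^{[0]}$ onto the first Hilbert-scheme factor and retaining the first $k$ copies of $S$ as tautological parameter factors, and likewise let $\Psi := \mathfrak{q}_{j_1}\cdots\mathfrak{q}_{j_l}$ on the second factor, retaining the last $l$ copies of $S$. Because the unprimed operators $\mathfrak{q}_i$ and the primed operators $\mathfrak{q}_j'$ act on disjoint Hilbert-scheme factors and therefore commute, the left-hand side of the corollary is exactly $(\Phi \times \Psi)_\ast(\Delta_{S^{[0]}})$, with the input class $\Gamma \in {\rm CH}^\ast(S^{k+l})$ contracted in at the very end. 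I would stress here that, since everything in sight is $\mathbb{Q}$-linear in the input and Lemma~\ref{LemmaGeorgsTrick} holds for \emph{arbitrary} correspondences, there is no need to assume that $\Gamma$ splits as an external product on $S^{k+l}$.

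Applying Lemma~\ref{LemmaGeorgsTrick} with $(\Gamma,\widetilde{\Gamma}) = (\Phi,\Psi)$ then rewrites $(\Phi \times \Psi)_\ast(\Delta_{S^{[0]}})$ as the composition $\Psi \circ {^t\Phi}$. To match this against the right-hand side I would compute ${^t\Phi}$ using the single fact $\mathfrak{q}_{-i} = (-1)^i \, {^t\mathfrak{q}_i}$: transposing the product $\mathfrak{q}_{i_1}\cdots\mathfrak{q}_{i_k}$ reverses the order of its factors and transposes each one, producing the overall sign $(-1)^{i_1+\cdots+i_k}$ together with the annihilation string $\mathfrak{q}_{-i_k}\cdots\mathfrak{q}_{-i_1}$. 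Composing $\Psi$ after ${^t\Phi}$ then stacks the two strings of operators into $\mathfrak{q}_{j_1}\cdots\mathfrak{q}_{j_l}\,\mathfrak{q}_{-i_k}\cdots\mathfrak{q}_{-i_1}$, which is precisely the operator appearing on the right.

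The main bookkeeping — and the only genuinely delicate point — is tracking the copies of $S$. On the left the parameter factors occur in the natural order $1,\dots,k,k+1,\dots,k+l$, split as the \emph{first} $k$ for $\Phi$ and the \emph{last} $l$ for $\Psi$; on the right the composition $\Psi \circ {^t\Phi}$ reads them off in a different order, namely the last $l$ factors first (from $\Psi$) and then the first $k$ factors in reversed order (the reversal being forced by the transpose in ${^t\Phi}$). Reconciling these two orderings of the factors of $S^{k+l}$ is exactly the permutation $\tau$, and verifying that the input $\Gamma$ on the left corresponds to $\tau_\ast(\Gamma)$ on the right is the step requiring care. Everything else is a formal consequence of Lemma~\ref{LemmaGeorgsTrick} and the transposition rule for $\mathfrak{q}_{\pm i}$.
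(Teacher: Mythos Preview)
Your proposal is correct and follows exactly the approach the paper takes: apply Lemma~\ref{LemmaGeorgsTrick} with $X = S^{[0]}$, use $\Delta_{S^{[0]}} = 1_{S^{[0]}}$, and invoke the transposition rule $\mathfrak{q}_{-i} = (-1)^i\,{^t\mathfrak{q}_i}$ to unwind ${^t\Phi}$. The paper's own proof is a single sentence pointing at the lemma and the identification of the diagonal; your write-up simply spells out the bookkeeping (the sign, the order reversal, and the permutation $\tau$ of the $S$-factors) that the paper leaves implicit.
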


\begin{proof}
This follows from the definition of the Nakajima operators and Lemma~\ref{LemmaGeorgsTrick} on noting that $\Delta_{S^{[0]}} = 1_{S^{[0]}}$ under the identification $S^{[0]} \times S^{[0]} = S^{[0]}$ via the diagonal map.
\end{proof}

\medskip
\bibliographystyle{alpha}
\bibliography{MA_references}
\end{document}